\title{ Planar wheel-like bricks
\footnote{ The research
 is partially supported by NSFC (No. 12271235) and NSF of Fujian Province (No. 2021J06029).
\newline E-mail addresses: flianglu@163.com (F. Lu), xuejinxin00@163.com (J. Xue)}}
\author{Fuliang Lu, Jinxin Xue \\
\small {School of Mathematics and Statistics, Minnan Normal University, Zhangzhou,  China}}
\date{}
\newtheorem{lem}{Lemma}[section]
\newtheorem{thm}[lem]{Theorem}
\newtheorem{pro}[lem]{Proposition}
\newtheorem{conj}[lem]{Conjecture}
\def\typeone{type~\uppercase\expandafter{\romannumeral 1}}
\def\typetwo{type~\uppercase\expandafter{\romannumeral 2}}
\begin{document}
\newcommand{\udots}{\mathinner{\mskip1mu\raise1pt\vbox{\kern7pt\hbox{.}}
\mskip2mu\raise4pt\hbox{.}\mskip2mu\raise7pt\hbox{.}\mskip1mu}}
\maketitle
\begin{abstract}

    An edge $e$ in a matching covered graph $G$ is {\em removable} if $G-e$ is matching covered;
     a pair $\{e,f\}$ of edges of  $G$ is a \emph{removable doubleton} if $G-e-f$ is matching covered, but neither $G-e$ nor $G-f$ is. Removable edges and removable doubletons are called {\em removable classes},
     which was introduced by Lov\'asz and Plummer  in connection with ear decompositions of matching covered graphs.
    A \emph{brick} is a nonbipartite matching covered graph without nontrivial tight cuts.
    A brick $G$ is \emph{wheel-like} if $G$ has a vertex $h$,  such that every removable class of $G$ has an edge incident with $h$. Lucchesi and  Murty conjectured that  every planar wheel-like brick is an odd wheel.
We present a proof of this conjecture in this paper.

\par {\small {\it Keywords:}\ \   wheel-like bricks; perfect matchings; removable classes    }
\end{abstract}
\vskip 0.2in \baselineskip 0.1in
\section{Introduction}

Graphs considered in this paper are finite and loopless (multiple edges are allowed, where an edge $e$ of a graph $G$ is a
{\em multiple edge} if there are at least two edges of $G$ with the same ends as $e$). We follow \cite{BM08} and  \cite{LP86} for undefined notations and terminologies.
Let $G$ be a graph with the vertex set $V(G)$ and the edge set $E(G)$.
For $X,Y\subseteq V(G)$, by $E_G[X,Y]$, or simply $E[X,Y]$,  we mean the set of edges of $G$ with one end vertex in $X$ and the other end vertex in $Y$.
We shall simply write $E[x,y]$ for $E[X,Y]$ if $X=\{x\}$ and $Y=\{y\}$.
Let $\partial_G(X)=E_G[X,\overline{X}]$ be an edge cut of $G$, where $\overline{X}=V(G)\setminus X$. (If $G$ is understood,
the subscript $G$ is omitted.)
If $X=\{u\}$, then we denote $\partial_G(\{u\})$, for brevity, by $\partial_G(u)$ or $\partial(u)$.
An edge cut $\partial(X)$ is \emph{trivial} if $|X|=1$ or $|\overline{X}|=1$.

Let $G$ be a graph with a perfect matching.
An edge $e$ in $G$ is \emph{forbidden} if $e$ does not lie in any perfect matching of $G$.
A connected nontrivial graph is \textit{matching covered} if each of its edges is not forbidden.
We denote by $G/(X\rightarrow x)$ the graph obtained from $G$
by contracting $X$ to a single vertex $x$, for brevity, by $G/X$.
The graphs
$G/X$ and $G/\overline{X}$ are the two \emph{$\partial(X)$-contractions} of $G$.
An edge cut $\partial(X)$ is \textit{separating} if both $\partial(X)$-contractions of $G$ are matching covered, and is \textit{tight} if every perfect matching contains exactly one edge of $\partial(X)$.
A matching covered nonbipartite graph is a \textit{brick} if every tight cut is trivial, is \emph{solid} if every separating cut is a tight cut.
Bricks   are 3-connected and bicritical \cite{ELP82},  where a nontrivial graph $G$ is {\em bicritical} if $G -\{u, v\}$ has a perfect matching for any two vertices $u$ and $v$ of $G$.

Lov\'asz~\cite{Lovasz87} proved
that any matching covered graph can be decomposed
into a unique list of bricks and braces (a matching covered bipartite graph in which every tight cut is trivial) by a
procedure called the tight cut decomposition. In particular, any two decompositions of a matching covered graph $G$ yield the same number of bricks; this number is denoted by $b(G)$.
A \emph{near-brick} $G$ is a matching covered graph with $b(G)=1$. Obviously, a brick is a near-brick.

We say that an edge $e$ in a matching covered graph $G$ is \textit{removable} if $G-e$ is matching covered.
A pair $\{e,f\}$ of edges of a matching covered graph $G$ is a \emph{removable doubleton} if $G-e-f$ is matching covered, but neither $G-e$ nor $G-f$ is. Removable edges and removable doubletons are called {\em removable classes}.

 Lov\'asz \cite{lo} proved  that every brick distinct from $K_4$ and
$\overline{C_6}$ (see Figure \ref{fig:H8c} (left)) has a removable edge. Improving Lov\'asz's result,  Carvalho et al.  obtained  a lower bound of removable classes of
a brick  in terms of  the maximum degree.

 \begin{thm}[\cite{CLM02c}] \label{thm:re_in_brick}
    Every brick has at least $\Delta(G)$ removable classes, where $\Delta(G)$
    is the maximum of the degrees of vertices in $G$.
\end{thm}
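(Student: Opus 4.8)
The plan is to fix a vertex $h$ of maximum degree, so that $|\partial(h)|=\Delta(G)$, write $\partial(h)=\{e_1,\dots,e_\Delta\}$, and count removable classes through their interaction with this star. The engine is the following local observation, which I would establish first: \emph{every removable class meets $\partial(h)$ in at most one edge}. For a removable edge forming a singleton class this is trivial. For a removable doubleton $\{e,f\}$ one uses the defining property that every perfect matching contains both of $e,f$ or neither (deleting $e$ must make $f$ forbidden, and vice versa). If both $e,f$ were incident to $h$, then since each perfect matching contains exactly one edge of $\partial(h)$, no perfect matching could contain both $e$ and $f$; by the both-or-neither property none would contain $e$ at all, contradicting that $G$ is matching covered. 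Hence distinct edges of $\partial(h)$ can never belong to a common removable class, so the number of removable classes is at least the number of edges of $\partial(h)$ that lie in some removable class. Note this bound is tight at $K_4$, where $\Delta=3$ and the three removable doubletons (the three perfect matchings) meet the star of any vertex in exactly one edge each.

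By this reduction it suffices to show that \emph{each of the $\Delta$ edges incident to $h$ lies in some removable class}. When $e_i$ is itself removable I take the class $\{e_i\}$. When $e_i$ is not removable, $G-e_i$ fails to be matching covered, so some edge $f_i\neq e_i$ is forbidden in $G-e_i$; the natural attempt is to prove that $\{e_i,f_i\}$ is then a removable doubleton. Here I would exploit that $G$ is a brick, hence $3$-connected and bicritical by the Edmonds--Lov\'asz--Pulleyblank result quoted above: bicriticality supplies perfect matchings through prescribed edges and avoiding prescribed vertices, which I would use to verify that $G-e_i-f_i$ is matching covered while both $G-e_i$ and $G-f_i$ are not, and $3$-connectivity rules out the degenerate small-cut configurations.

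The main obstacle is precisely the well-definedness of this assignment on the non-removable edges: it is \emph{not} automatic that a non-removable edge $e_i$ lies in any removable class, since $e_i$ could be neither removable nor a member of a removable doubleton, and then the star-count alone undershoots $\Delta$ and must be compensated by removable classes lying away from $h$. Resolving this is the technical heart of the proof. To handle it I would pass to the tight cut decomposition of the matching covered graph obtained after deleting $e_i$ together with its forbidden mate, and control how the perfect-matching space collapses using Lov\'asz's dimension formula $\dim\mathrm{Lin}(H)=|E(H)|-|V(H)|+2-b(H)$ (for a brick, $\dim\mathrm{Lin}(G)=|E(G)|-|V(G)|+1$); the aim is to force, for each uncovered star edge, a new removable class that is not charged to any other edge of $\partial(h)$. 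A more robust fallback, which I would adopt if the local analysis stalls, is induction along the brick generation theorem, every brick arising from $K_4$, $\overline{C_6}$, or the Petersen graph by edge additions and vertex splittings, tracking simultaneously how $\Delta(G)$ and the family of removable classes evolve under each generating operation; there the delicate point is to avoid double counting when a generating step merges or splits removable classes and to check that the inequality is preserved, with the base cases $K_4$ and $\overline{C_6}$ verified directly (each having exactly $3=\Delta$ removable classes).
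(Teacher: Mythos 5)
The paper does not prove Theorem~\ref{thm:re_in_brick} at all: it is imported verbatim from \cite{CLM02c} and used as a black box, so there is no internal proof to compare against, and your proposal has to be judged on its own. Its opening reduction is correct: no removable class contains two edges of $\partial(h)$. (For a doubleton, though, the ``both-or-neither'' property you invoke is not the \emph{defining} property, and your parenthetical justification is circular; it follows from Lov\'asz's theorem, quoted in Section~2 of the paper, that a brick with a removable doubleton $\{e,f\}$ is near-bipartite with both ends of $e$ in one colour class of $G-e-f$ and both ends of $f$ in the other --- in particular two edges of a doubleton can never share a vertex.)

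The genuine gap is the one you flag and do not close, and it is not a technicality: the plan's main step --- every edge of $\partial(h)$ lies in some removable class --- is simply \emph{false}, and $\overline{C_6}$ (Figure~\ref{fig:H8c}), which you yourself cite as a tightness example, is a counterexample. Write $\overline{C_6}$ as two triangles $u_1u_2u_3$ and $v_1v_2v_3$ joined by the rungs $u_iv_i$. Every vertex has degree $\Delta=3$, and the removable classes are exactly the three doubletons $\{u_iu_j,v_iv_j\}$; no rung lies in any removable class. Indeed, $u_1v_1$ is not removable ($u_2u_3$ is forbidden in $\overline{C_6}-u_1v_1$), and it lies in no removable doubleton, since deleting a doubleton from a brick must leave a bipartite graph, whereas no pair of edges containing the rung $u_1v_1$ can meet both of the two vertex-disjoint triangles. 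Hence at \emph{every} vertex $h$ of $\overline{C_6}$ only two of the three edges of $\partial(h)$ lie in removable classes; your star-charging count gives $2<3=\Delta$, and the bound survives only because of the third doubleton, which is disjoint from $\partial(h)$. The same example refutes your proposed repair that a non-removable $e_i$ forms a removable doubleton with an edge forbidden in $G-e_i$: $u_2u_3$ is forbidden in $\overline{C_6}-u_1v_1$, yet $\{u_1v_1,u_2u_3\}$ is not a removable doubleton. So any correct proof must manufacture removable classes \emph{away} from the chosen vertex, which is precisely what a charging scheme anchored at $\partial(h)$ cannot do; your two fallbacks (a dimension count in the matching lattice, or induction along the brick generation theorem while tracking removable classes) are exactly where this difficulty lives and are left entirely unworked --- the latter is also far heavier machinery than the theorem itself. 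As written, the proposal establishes only the weaker statement that the number of removable classes is at least the number of edges of $\partial(h)$ that lie in removable classes.
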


 In \cite{zl} and \cite{zg}, Zhai et al., and Zhai and Guo presented  lower bounds of the number of removable edges (ears)   in
a matching covered graph $G$ in terms of the number of the perfect matchings needed to cover all edges of $G$ and the edge-chromatic number, respectively.
 Wu  et al. \cite{wl} showed every claw-free brick $G$ with more
than 6 vertices has at least $5|V(G)|/8$ removable edges.
   Wang   et al. \cite{wdl} obtained the number of removable edges of Halin graphs $G$ with
even number of vertices, other than $K_4$,
$\overline{C_6}$  and $R_8$ (see Figure \ref{fig:H8c} (right)), is at least $(|V(G)|+2)/4$.
 Generally, Lucchesi and Murty \cite{Lucchesi2024} conjectured that there exist a positive real number $c$ and an integer $N$ such that any brick $G$ of
order  at least $N$ has $c|V(G)|$ removable edges.

For an integer $k\geq3$, the \emph{wheel} $W_k$ is the graph obtained from a cycle $C$ of length $k$ by adding a new vertex $h$ and joining it to all vertices of $C$.
The cycle $C$ is the \emph{rim} of $W_k$, the vertex $h$ is its \emph{hub}. Obviously, every wheel is planar. A wheel $W_k$ is odd if $k$ is odd. The graph $K_4$  (a complete graph with 4 vertices)  is an odd wheel that every edge lies in a removable doubleton. For an odd wheel other than $K_4$,
it can be checked every edge on the rim is not removable, and  every edge incident with the hub is removable  (for example, see Exercise 2.2.4 in \cite{Lucchesi2024}). More generally, we say
a brick $G$ is \emph{wheel-like} if $G$ has a vertex $h$, called its \emph{hub}, such that every removable class of $G$ has an edge in $\partial(h)$. Lucchesi and Murty  made the following conjecture.
\begin{conj}[\cite{Lucchesi2024}]\label{main}
    Every planar wheel-like brick is an odd wheel.
\end{conj}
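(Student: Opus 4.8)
The plan is to first pin down the local structure at the hub by a counting argument, then pass to the plane graph $G-h$, and finally show that $G-h$ must be an odd cycle with $h$ joined to all of its vertices.

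\textbf{The counting step.} I would begin by noting that distinct removable classes are edge-disjoint. Indeed, if $\{e,f\}$ is a removable doubleton then for any edge $g$ the perfect matchings of $G-e-g$ form a subset of those of $G-e$, so every edge forbidden in $G-e$ stays forbidden after deleting $f$ unless it equals $f$; since $G-e-f$ is matching covered, $f$ is the \emph{unique} forbidden edge of $G-e$. Hence $f$ is determined by $e$, no edge lies in two classes, and no removable edge lies in a doubleton. The definition of wheel-like then gives at once the key fact $(\ast)$: every edge of $G-h$ is non-removable and no removable doubleton is contained in $E(G-h)$, for such a class would avoid $\partial(h)$. Moreover, combining Theorem~\ref{thm:re_in_brick} with edge-disjointness, the at least $\Delta(G)$ removable classes each use at least one of the $d(h)\le\Delta(G)$ edges of $\partial(h)$, forcing $d(h)=\Delta(G)$, exactly $d(h)$ classes, each meeting $\partial(h)$ in a single edge, and every edge of $\partial(h)$ lying in a class. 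Finally, since $G$ is bicritical, $G-h$ is factor-critical; in particular $|V(G)|$ is even, so the oddness of the rim will be automatic once $G-h$ is shown to be a cycle.

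\textbf{The planar reduction.} Because $G$ is $3$-connected and planar its embedding is unique and $G-h$ is $2$-connected, so every face of $G-h$ is bounded by a cycle. Deleting $h$ merges the $d(h)$ faces around $h$ into one face $F_h$ whose boundary is a cycle $W$; the neighbours $v_1,\dots,v_{d(h)}$ of $h$ occur in this cyclic order on $W$, and the spokes subdivide $F_h$ into wedge-faces $R_i$ bounded by $hv_i$, a subpath $P_i$ of $W$ from $v_i$ to $v_{i+1}$, and $v_{i+1}h$; everything else of $G-h$ lies on the far side of $W$. In these terms the theorem reduces to two statements: (A) $h$ is adjacent to every other vertex, i.e.\ $d(h)=|V(G)|-1$ and each $P_i$ is a single edge, and (B) $G-h$ has no chords, i.e.\ $G-h=W$. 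Together (A) and (B) say $G-h$ is a spanning cycle of the non-hub vertices, and since $|V(G)|$ is even this cycle is odd, so $G$ is the odd wheel $W_{|W|}$.

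\textbf{The structural core (the hard part).} The remaining, and main, task is to deduce (A) and (B) from $(\ast)$: every edge of $G-h$ is non-removable. The guiding principle is that non-removability is extremely restrictive, and that any deviation from a wheel --- a wedge-face $R_i$ that is not a triangle, a vertex interior to $W$, or a chord of $W$ --- yields an edge $f\in E(G-h)$ whose removal keeps $G$ matching covered, contradicting $(\ast)$. Concretely I would argue via a minimal counterexample (or by peeling an innermost inner face) and, for such a candidate $f$, show that for each edge $g$ there is a perfect matching through $g$ avoiding $f$: one produces two perfect matchings whose symmetric difference is a single even cycle lying in $G-h$ and running through $f$, and reroutes along it to free $f$. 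Planarity enters decisively here, since the cyclic structure of $W$ and the wedge-faces constrain how $M$-alternating cycles can travel, while bicriticality supplies matchings that avoid prescribed vertices. I expect this to be the principal obstacle, because one must control all edges $g$ simultaneously, rule out the creation of new forbidden edges, and keep track of which alternative --- removable spoke versus doubleton --- occurs at each $hv_i$.

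\textbf{Finishing and special cases.} Once (A) and (B) hold, $G-h$ is an odd cycle with $h$ adjacent to all its vertices, so $G$ is an odd wheel. It then remains to dispose of the doubleton regime and of multigraph artefacts. If some spoke $hv_i$ lies in a doubleton $\{hv_i,f_i\}$ with $f_i\in E(G-h)$, I would show, by the same analysis, that this can occur only when $W$ is a triangle, forcing $G=K_4=W_3$. Parallel edges at $h$ (bigon faces) are treated directly: they collapse the wedge structure to a small brick that is checked by hand. This completes the identification of $G$ with an odd wheel.
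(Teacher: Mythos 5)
Your preliminary observations are correct: removable classes in a brick are pairwise edge-disjoint, the wheel-like hypothesis forces $d(h)=\Delta(G)$ with every edge of $\partial(h)$ lying in a class, every removable edge lies in $\partial(h)$, and no doubleton is contained in $E(G-h)$. But these facts are cheap; the entire content of the theorem is what you label the ``structural core,'' and there you have not given a proof. You state that any deviation from wheel structure (a non-triangular wedge-face, a vertex not adjacent to $h$, a chord) ``yields'' a removable edge $f$ in $G-h$, and you describe the intended mechanism --- reroute along an alternating cycle through $f$ --- but you never exhibit the cycle, never verify that \emph{every} edge $g$ of $G-f$ retains a perfect matching (which is what removability requires), and you explicitly concede the point (``I expect this to be the principal obstacle''). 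Declaring the obstacle is not overcoming it; nothing in your sketch rules out, for instance, a brick in which every edge of $G-h$ is non-removable because deleting it creates a barrier far away from $h$, a situation your local face-by-face analysis does not see. The doubleton regime is likewise deferred ``by the same analysis''; handling it genuinely requires the theory of near-bipartite bricks (every brick with a removable doubleton is near-bipartite, and by Theorem~\ref{thm:near-bi_non-adj} the simple near-bipartite wheel-like bricks reduce to $K_4$, $\overline{C_6}$ and $R_8$), which your proposal does not engage.

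For comparison, the paper does not attempt any such local argument, and it is instructive why: it runs an induction on $|V(G)|$ built on heavy machinery. If $G$ is solid, it is an odd wheel by the Carvalho--Lucchesi--Murty theorem that every simple planar solid brick is an odd wheel (Theorem~\ref{thm:planar_solid}); this deep external result does the work that your ``structural core'' would have to replicate from scratch. If $G$ is nonsolid, it has a robust cut (Theorem~\ref{exist-robust}), and via Theorem~\ref{robust_H} and Lemmas~\ref{lem:at_least_one_wheel-like}--\ref{lem:V(H)_re_edge} the paper shows $G$ must be a splicing of two smaller wheel-like bricks, which are odd wheels by induction; finally, Lemmas~\ref{Wi_Wj} and~\ref{nonplanar} show that any wheel-like splicing of two odd wheels contains a subdivision of $K_{3,3}$ and so is nonplanar, a contradiction. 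Unless you can actually carry out the alternating-cycle rerouting argument in full --- controlling all edges $g$ simultaneously, as you yourself note --- your proposal is a plan for a proof, not a proof, and the gap sits exactly at the step the whole conjecture turns on.
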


We present a proof of this conjecture. The following is the main result.
\begin{thm}\label{main}
    Every planar wheel-like brick $G$ is an odd wheel and all  multiple edges of $G$ are incident with the hub.
\end{thm}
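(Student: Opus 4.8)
\noindent The plan is to first use a counting argument to locate the hub precisely, and then to exploit planarity to force the wheel structure. First I would show, purely by counting, that $h$ is a vertex of maximum degree carrying exactly $\Delta(G)$ removable classes. The key preliminary observation is that the removable classes of a brick are pairwise edge-disjoint. Indeed, a removable edge $e$ (for which $G-e$ is matching covered) cannot belong to any removable doubleton, whose defining property makes $G-e$ not matching covered. For a doubleton $\{e,f\}$, deleting an edge can only destroy perfect matchings, so any edge forbidden in $G-e$ remains forbidden in $G-e-f$ unless it equals $f$; since $G-e-f$ is matching covered, $f$ must be the unique edge forbidden in $G-e$, which means $e\in M\iff f\in M$ for every perfect matching $M$. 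Hence two doubletons $\{e,f\}$ and $\{e,g\}$ sharing $e$ would force $g\in M\iff e\in M$, so every perfect matching of $G-e-f$ avoids $g$, contradicting that $G-e-f$ is matching covered. Given disjointness, assigning to each removable class one of its edges in $\partial(h)$, which exists by the wheel-like hypothesis, is injective, so the number of removable classes is at most $d(h)\le\Delta(G)$. Theorem~\ref{thm:re_in_brick} gives at least $\Delta(G)$ removable classes, so equality holds throughout: $d(h)=\Delta(G)$, there are exactly $\Delta(G)$ removable classes, and every edge of $\partial(h)$ lies in exactly one of them. In particular every edge of $R:=G-h$ is non-removable.

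Next I would convert non-removability into a matching condition amenable to planar analysis. Since bricks are bicritical, $G-x-y$ always has a perfect matching, so an edge $a=uv$ is non-removable exactly when there is an edge $xy$ with $\{x,y\}\cap\{u,v\}=\varnothing$ for which $a$ lies in every perfect matching of $G-x-y$; equivalently $G-\{x,y,u,v\}$ has no perfect matching, which by Tutte's theorem is certified by a set $S$ with $o\bigl(G-\{x,y,u,v\}-S\bigr)\ge |S|+2$, where $o(\cdot)$ counts odd components. Because $G$ is a $3$-connected bicritical brick, such barriers are tightly constrained, and I would use planarity, via a fixed embedding that is essentially unique for $3$-connected graphs (Whitney), to pin them down. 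Fixing the cyclic order of $\partial(h)$ around $h$ splits the faces at $h$ into regions, each bounded by two spokes and a path of $R$; the aim is to show each such region is a triangle $hu_iu_{i+1}$, degenerating to a digon exactly when two spokes are parallel. Granting this, the neighbours of $h$, read without repetition, trace a cycle $C$, and since $h$ is joined to every vertex of $C$ the underlying simple graph of $G$ is a wheel with hub $h$ and rim $C$.

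The heart of the proof, and the step I expect to be hardest, is showing that a region at $h$ of boundary length at least four cannot occur, that is, that the $R$-edge of such a region emanating from a neighbour of $h$ would be removable, contradicting the conclusion of the first paragraph. This is precisely where planarity is indispensable, since wheel-like bricks with non-triangular hub-faces do exist nonplanarly; the task is to produce, in $G$ minus that edge, a perfect matching through each of the remaining edges, and the cleanest route is to combine the barrier certificate above with the fact, which I would invoke, that planar bricks are solid, so that non-removability forces a separating and hence tight cut whose planar shape can only be a triangle incident with $h$. Once the wheel structure is established the remaining conclusions are immediate: $G$ has a perfect matching, so $|V(G)|=|V(C)|+1$ is even and the rim $C$ is odd, making $G$ an odd wheel; and any two parallel edges of $R$ would give a second rim path between the same pair of neighbours of $h$, producing a region at $h$ that is neither a triangle nor a digon at $h$ and contradicting the structure just obtained, so all multiple edges are confined to the spokes at $h$.
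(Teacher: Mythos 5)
The heart of your argument rests on a false statement: it is not true that planar bricks are solid. The prism $\overline{C_6}$ is a planar brick that is not solid (the cut around either of its triangles is separating but not tight, since the perfect matching of three ``vertical'' edges meets it three times), and the same holds for the other six-vertex planar bricks in Figure~\ref{fig:C6+e-1}. Theorem~\ref{thm:planar_solid} says that every \emph{simple planar solid} brick is an odd wheel; solidity there is a hypothesis, not a consequence of planarity. If planarity implied solidity, the whole theorem would be an immediate corollary of Theorem~\ref{thm:planar_solid} and there would be nothing to prove; the entire difficulty of the conjecture, and the paper's machinery --- induction on $|V(G)|$, robust cuts (Theorems~\ref{exist-robust} and~\ref{robust_H}), and the splicing analysis of Lemmas~\ref{Wi_Wj} and~\ref{nonplanar} showing that a wheel-like splicing of two odd wheels is nonplanar --- exists precisely to dispose of the nonsolid planar case. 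The auxiliary claim in the same sentence, that non-removability of an edge ``forces a separating and hence tight cut,'' is also unjustified: the rim edges of any odd wheel $W_k$ ($k\ge 5$) are non-removable, yet odd wheels are solid, so all their separating cuts are trivial. Thus your key step --- that a face at $h$ of length at least four cannot occur --- is not established, and this is exactly the hard part of the theorem.

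Two smaller points. First, your Tutte reformulation is incorrect: ``$uv$ lies in every perfect matching of $G-x-y$'' is \emph{not} equivalent to ``$G-\{x,y,u,v\}$ has no perfect matching''; in fact the former implies that $G-\{x,y,u,v\}$ \emph{does} have one (delete $uv$ from any perfect matching of $G-x-y$). The correct equivalent is that the graph obtained from $G-x-y$ by deleting the \emph{edge} $uv$ has no perfect matching. Second, on the positive side, your opening counting argument is sound and pleasant: removable classes of a brick are pairwise disjoint (as the paper also notes), so the wheel-like hypothesis together with Theorem~\ref{thm:re_in_brick} forces $d(h)=\Delta(G)$, a bijection between removable classes and edges of $\partial(h)$, and non-removability of every edge of $G-h$. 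But this is only the starting point; without a correct replacement for the false ``planar implies solid'' step, the proposal does not prove the theorem.
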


\section{ Preliminaries }
We begin with some notations.
For a vertex set $X\subset V(G)$, denote by $G[X]$ the subgraph induced by $X$, by $N(X)$, or simply $N(u)$ when $X=\{u\}$, the set of all vertices in $\overline{X}$ adjacent to vertices in $X$.
Let $d_G(u)=|N(u)|$.
Let $G$ be a graph with a perfect matching.
A nonempty vertex set $S$ of $G$ is a {\em barrier} of $G$ if $o(G-S)=|S|$, where $o(G-S)$ is the number of components with odd number of vertices of $G-S$. Tutte  proved the following theorem which characterizes
graphs that have a perfect matching in 1947.

\begin{thm}[Tutte]\label{thm:Tutte}
    A graph $G$ has a perfect matching if and only if $o(G-X)\le|X|$, for every $X\subseteq V(G)$.
\end{thm}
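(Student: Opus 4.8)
The plan is to prove the two directions separately: the forward (necessity) direction is a short counting argument, while the converse (sufficiency) requires an extremal argument on an edge-maximal counterexample.

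First I would dispatch necessity. Suppose $G$ has a perfect matching $M$ and fix any $X\subseteq V(G)$. Each odd component $C$ of $G-X$ has an odd number of vertices, so $M$ cannot match all of $C$ internally; hence at least one vertex of $C$ is matched by $M$ to a vertex of $X$. Distinct odd components contribute distinct such matching edges, and these edges meet distinct vertices of $X$ (as $M$ is a matching), so the number of odd components is at most $|X|$, i.e. $o(G-X)\le|X|$.

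For sufficiency I would argue by contradiction, fixing the vertex set $V(G)$. Observe that adding an edge never increases $o(G-X)$ for any $X$, so the Tutte condition is preserved under edge additions; and taking $X=\emptyset$ forces $o(G)=0$, so $|V(G)|$ is even and $K_{|V(G)|}$ has a perfect matching. Hence, if some graph on $V(G)$ satisfies the Tutte condition but has no perfect matching, I may choose an \emph{edge-maximal} such graph $G$: it satisfies the condition and has no perfect matching, yet $G+e$ has a perfect matching for every non-edge $e$. Let $U$ be the set of vertices adjacent to all other vertices; note $U$ induces a complete graph. The argument splits on the structure of $G-U$. If $G-U$ is a disjoint union of complete graphs, I build a perfect matching directly: match each even clique component internally, send one vertex of each odd clique component into $U$ (possible since $o(G-U)\le|U|$) and match the rest of that component internally, and pair the leftover vertices of $U$ among themselves (even in number by parity, matchable since $U$ is complete). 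This contradicts the choice of $G$.

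The hard case, and the main obstacle, is when $G-U$ is \emph{not} a disjoint union of complete graphs. Then some component of $G-U$ contains vertices $a,b,c$ with $ab,bc\in E(G)$ but $ac\notin E(G)$, and since $b\notin U$ there is a vertex $d$ with $bd\notin E(G)$. By maximality, $G+ac$ has a perfect matching $M_1$ and $G+bd$ has a perfect matching $M_2$; necessarily $ac\in M_1$ and $bd\in M_2$, else $M_i$ would already match $G$. I would then analyze the symmetric difference $M_1\triangle M_2$, a disjoint union of even alternating cycles, and reroute along the cycle(s) through $ac$ and $bd$—using the genuine edges $ab$ and $bc$ of $G$ to bridge—so as to produce a perfect matching of $G$ that avoids both $ac$ and $bd$, the desired contradiction. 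The delicate point is the case analysis according to whether $ac$ and $bd$ lie on the same cycle of $M_1\triangle M_2$ and on their relative positions around it, since the rerouting must be arranged so that every edge it uses genuinely belongs to $G$.
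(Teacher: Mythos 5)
The paper does not prove this statement at all: Theorem~\ref{thm:Tutte} is quoted as Tutte's classical 1947 characterization and used as a black box (only its consequence, Lemma~\ref{forbidden}, is invoked later), so there is no in-paper proof to compare against. Your argument is the standard extremal proof usually attributed to Lov\'asz (the one in Bondy--Murty): the counting direction is correct as written, and the sufficiency setup is sound --- the Tutte condition is indeed preserved under edge addition, $X=\emptyset$ forces even order, an edge-maximal counterexample $G$ exists, and your Case~1 (where $G-U$ is a disjoint union of complete graphs, with $U$ the set of universal vertices) correctly assembles a perfect matching from $o(G-U)\le |U|$ and parity. The only place you stop at plan level is the same-cycle subcase of the symmetric-difference argument, but the maneuver you name is exactly the right one and does close: if the alternating cycle $C$ of $M_1\triangle M_2$ contains both $ac$ and $bd$, then deleting $ac$ and $bd$ splits $C$ into two paths, and depending on how the endpoints pair up you bridge with $ab$ or with $bc$; say the cyclic order is $a,c,\dots,b,d,\dots,a$, then deleting $a$ and $b$ from $C$ leaves two paths each with an even number of vertices, each of which is perfectly matched by alternate $C$-edges (one path by its $M_2$-edges, the other by its $M_1$-edges), and adding $ab$ together with $M_1\setminus E(C)$ off the cycle yields a perfect matching of $G$ avoiding both $ac$ and $bd$, the desired contradiction; the other pairing is symmetric under swapping $a$ and $c$, which is legitimate since both $ab$ and $cb$ lie in $E(G)$. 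So your proposal is a correct and complete-in-outline proof of a statement the paper deliberately leaves unproved; if you write it up, just execute that final parity check explicitly rather than gesturing at it.
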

The following result can be obtained by Theorem \ref{thm:Tutte}.
\begin{lem}[\cite{CLM12}]\label{forbidden}
    Assume that $G$ is a graph with a perfect matching. An edge $uv$ is forbidden if and only if there exists a barrier containing $u$ and $v$.
\end{lem}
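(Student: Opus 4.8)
The plan is to prove both implications directly from Tutte's theorem (Theorem~\ref{thm:Tutte}), exploiting a parity constraint that is available precisely because $G$ has a perfect matching, so $|V(G)|$ is even.

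For the easy direction, I would assume that $S$ is a barrier with $u,v\in S$, so that $o(G-S)=|S|$, and examine an arbitrary perfect matching $M$ of $G$. Each odd component of $G-S$ must send at least one vertex to a matching partner in $S$, since an odd component cannot be perfectly matched internally. As there are exactly $|S|$ odd components and exactly $|S|$ vertices in $S$, this forces a perfect matching between $S$ and the odd components: every vertex of $S$ is matched to a vertex outside $S$. In particular $u$ and $v$ are each matched outside $S$, so $uv\notin M$. Since $M$ was arbitrary, $uv$ is forbidden.

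For the converse, I would suppose $uv$ is forbidden. Then $G-u-v$ has no perfect matching; otherwise augmenting such a matching by the edge $uv$ would yield a perfect matching of $G$ containing $uv$. Applying Theorem~\ref{thm:Tutte} to $G-u-v$ produces a set $X\subseteq V(G)\setminus\{u,v\}$ with $o((G-u-v)-X)>|X|$. Setting $S=X\cup\{u,v\}$ and noting $(G-u-v)-X=G-S$, this reads $o(G-S)>|S|-2$, i.e.\ $o(G-S)\ge|S|-1$.

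The key step is to upgrade this strict deficiency into the exact equality required of a barrier, and this is where the parity hypothesis does the work; I expect no serious obstacle beyond this bookkeeping. Because $|V(G)|$ is even, for every $T\subseteq V(G)$ the quantities $o(G-T)$ and $|T|$ have the same parity: the even components of $G-T$ contribute an even number of vertices, so $o(G-T)\equiv|V(G)|-|T|\equiv|T|\pmod 2$. In particular $o(G-S)\equiv|S|\pmod2$, which rules out $o(G-S)=|S|-1$ and forces $o(G-S)\ge|S|$. On the other hand, since $G$ itself has a perfect matching, Theorem~\ref{thm:Tutte} applied to $G$ gives $o(G-S)\le|S|$. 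Hence $o(G-S)=|S|$, so $S$ is a nonempty barrier containing both $u$ and $v$, completing the proof. The one point requiring care is that the set $X$ returned by Tutte lies in $V(G)\setminus\{u,v\}$, so that $|S|=|X|+2$ holds exactly and the parity comparison is clean.
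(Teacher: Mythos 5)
Your proof is correct, and it follows exactly the route the paper indicates: the paper states this lemma without proof, citing \cite{CLM12} and remarking only that it ``can be obtained by Theorem~\ref{thm:Tutte}'', which is precisely your derivation (forbidden edge $\Rightarrow$ Tutte deficiency in $G-u-v$, upgraded to equality by the parity of $|V(G)|$ and the bound $o(G-S)\le|S|$, plus the standard counting argument for the converse). Both directions are airtight, including the care taken that $X\subseteq V(G)\setminus\{u,v\}$ so that $|S|=|X|+2$.
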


Let $G$ and $H$ be two vertex-disjoint graphs and let $u$ and $v$ be vertices of $G$ and $H$, respectively, such that $d_G(u)=d_H(v)$.
Moreover, let $\theta$ be a given bijection between $\partial_G(u)$ and $\partial_H(v)$.
We denote by $(G(u)\odot H(v))_\theta$ the graph obtained from the union of $G-u$ and $H-v$ by joining, for each edge $e$ in $\partial_H(v)$, the end of $e$ in $H$ belonging to $V(H)-v$ to the end of $\theta(e)$ in $G$ belonging to $V(G)-u$;
and refer to $(G(u)\odot H(v))_\theta$ as the graph obtained by \emph{splicing $G$ (at $u$), with $H$ (at $v$), with respect to the bijection $\theta$}, for brevity, to $G(u)\odot H(v)$.
In general, the graph resulting from splicing two graphs $G$ and $H$ depends on the choice of $u$, $v$ and $\theta$. The following proposition can be gotten by the definition of matching covered graphs directly (see Theorem 2.13 in \cite{Lucchesi2024} for example).

\begin{pro}\label{thm:MC_IS_MC}
    The splicing of two matching covered graphs is also matching covered.
\end{pro}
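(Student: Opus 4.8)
Write $S=(G(u)\odot H(v))_\theta$ for the spliced graph, and recall that its vertex set is $(V(G)-u)\cup(V(H)-v)$. Its edges fall into three groups: the edges of $G-u$, the edges of $H-v$, and the \emph{splice edges}, one for each $e\in\partial_H(v)$: writing $e=vw$ and $\theta(e)=ux\in\partial_G(u)$, the splice edge is $wx$. Since $\theta$ is a bijection, these splice edges are exactly the edges of $S$ running between $V(G)-u$ and $V(H)-v$. Because $G$ and $H$ are nontrivial with perfect matchings, $|V(S)|=|V(G)|+|V(H)|-2\ge 2$ and $S$ has at least one edge, so $S$ is nontrivial. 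It therefore suffices to show that $S$ is connected and that every edge of $S$ lies in a perfect matching of $S$.

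The plan for the covering is a single gluing construction. Suppose first we are given an edge $g$ of $G-u$. Since $G$ is matching covered, $g$ lies in a perfect matching $M$ of $G$, and $M$ contains exactly one edge $ux\in\partial_G(u)$ (as $u$ must be saturated). Put $e=\theta^{-1}(ux)=vw\in\partial_H(v)$; since $H$ is matching covered, $vw$ lies in a perfect matching $N$ of $H$. I claim that $M'=(M\setminus\{ux\})\cup(N\setminus\{vw\})\cup\{wx\}$ is a perfect matching of $S$: the set $M\setminus\{ux\}$ saturates $(V(G)-u)\setminus\{x\}$, the set $N\setminus\{vw\}$ saturates $(V(H)-v)\setminus\{w\}$, and the splice edge $wx$ saturates the two leftover vertices $x$ and $w$; these three parts live on disjoint vertex sets apart from $x,w$, so $M'$ is indeed a perfect matching, and it contains $g$. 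Edges of $H-v$ are handled by the mirror image of this argument, and a splice edge $wx$ (with $\theta^{-1}$ as above) is covered by starting from a perfect matching $M$ of $G$ containing $ux$ and a perfect matching $N$ of $H$ containing $vw$, yielding the same $M'\ni wx$. Thus every edge of $S$ is non-forbidden.

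For connectivity, the key observation is that $G-u$ is itself connected. Otherwise $G-u$ has components $C_1,\dots,C_p$ with $p\ge 2$; a perfect matching saturating $u$ through an edge $ux$ with $x\in C_i$ forces $|C_i|$ odd and $|C_j|$ even for all $j\ne i$, so edges of $\partial_G(u)$ landing in two distinct components cannot both lie in perfect matchings, contradicting that $G$ is matching covered unless all of $\partial_G(u)$ meets a single component; but then the remaining components are isolated in $G$, contradicting connectedness of $G$. Hence $p=1$, and symmetrically $H-v$ is connected. Since $d_G(u)=d_H(v)\ge 1$, at least one splice edge joins the connected piece $G-u$ to the connected piece $H-v$, so $S$ is connected. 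Combined with the covering argument, $S$ is matching covered.

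The construction itself is routine; the only point requiring care is connectivity of $S$, where one must rule out the possibility that the splice edges fail to link the two sides. The parity argument above, showing that matching coveredness already forces $G-u$ and $H-v$ to be connected, is the step I expect to be the genuine (if mild) obstacle, and it is what makes the single splice edge in each perfect matching suffice to tie the spliced graph together.
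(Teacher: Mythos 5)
Your proof is correct, and it is exactly the standard argument that the paper leaves implicit: the paper gives no proof of this proposition, remarking only that it follows directly from the definition of matching covered graphs (citing Theorem 2.13 of Lucchesi--Murty), and that reference's proof is the same gluing construction you give, transferring a perfect matching across the splice via the bijection $\theta$. Your connectivity step, which in effect reproves that a matching covered graph stays connected after deleting a vertex (a consequence of $2$-connectedness, which the paper elsewhere quotes from Lov\'asz--Plummer), is a valid elementary substitute for that citation.
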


   By Proposition \ref{thm:MC_IS_MC}, $G(u)\odot H(v)$ is a matching covered graph if $G$ and $H$ are matching covered. Let $C=\partial (V(G)\setminus \{u\})$.
   Then the two $C$-contractions of $G(u)\odot H(v)$ are $G$ and $H$, respectively.
   Therefore, the edge cut $C$ is separating in $G(u)\odot H(v)$.
   A matching covered graph $G$ is \emph{near-bipartite} if it has a removable doubleton $R$ such that the subgraph $G-R$ obtained by the deletion of $R$ is a (matching covered) bipartite graph. In fact,     every brick with a removable doubleton is near-bipartite \cite{Lovasz87}.  So, if $\{e, f\}$ is a removable doubleton of a brick $G$, then both ends of $e$ lie in one color class of $G-\{e, f\}$, and both ends of $f$ lie in the other color class.
    Clearly, any two pairs of removable classes in a brick are disjoint.
The following  theorem is about  removable edges of a near-bipartite brick.

\begin{thm}[Theorem 9.17 in \cite{Lucchesi2024}]\label{thm:near-bi_non-adj}
    Every simple near-bipartite brick, distinct from $K_4$, $\overline{C_6}$ and  $R_8$,  has two nonadjacent removable edges.
\end{thm}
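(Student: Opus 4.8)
The plan is to work directly with the near-bipartite structure. Write $H = G - e - f$, where $\{e,f\}$ is a removable doubleton of $G$ and $H$ is a matching covered bipartite graph with color classes $(A,B)$; by the facts recalled above, $e = a_1a_2$ joins two vertices of $A$ and $f = b_1b_2$ joins two vertices of $B$. The first thing I would record is the perfect-matching dichotomy for $G$: since $H$ is balanced bipartite, a perfect matching of $G$ uses $e$ if and only if it uses $f$, so the perfect matchings of $G$ split into those avoiding both $e,f$ (exactly the perfect matchings of $H$) and those using both (which restrict to perfect matchings of $H' := H - \{a_1,a_2,b_1,b_2\}$). This dichotomy is what converts removability questions in $G$ into covering questions inside $H$.

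The second step is a transfer lemma reducing removability in $G$ to the bipartite graph $H$. Recall that an edge $g$ is removable in $G$ iff every edge $h \ne g$ lies in some perfect matching of $G$ avoiding $g$. Using the dichotomy, I would prove: if $g \in E(H)$ satisfies (a) $H - g$ is matching covered and (b) $H' - g$ still has a perfect matching, then $g$ is removable in $G$. Indeed, (a) supplies, for each $h \in E(H)$, a perfect matching of $H - g \subseteq G - g$ through $h$, while (b) supplies a perfect matching of $G-g$ through $e$ (hence through $f$), which are the only edges of $G$ outside $H$. Lemma \ref{forbidden} would be used to verify that no barrier of $G - g$ forbids an edge once (a) and (b) hold, so that $G-g$ is genuinely matching covered.

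With the transfer lemma in place, the goal becomes to locate two \emph{nonadjacent} edges of $H$ satisfying (a) and (b), both disjoint from the four special vertices $a_1,a_2,b_1,b_2$. Here I would exploit the richness of bipartite matching covered graphs: an ear decomposition of $H$ produces many removable edges of $H$, and as the size of $H$ grows one can peel off removable edges that remain coverable after contracting away the doubleton vertices, choosing two that share no endpoint. As corroboration of abundance I would also invoke Theorem \ref{thm:re_in_brick}: $G$ has at least $\Delta(G) \ge 3$ pairwise disjoint removable classes, and since the removable doubletons of a near-bipartite brick are severely constrained by the bipartition $(A,B)$, genuine removable edges are forced to exist in all but small cases.

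The main obstacle, and the origin of the three exceptions, is the endgame where $H$ is small or where every removable edge happens to be pairwise adjacent to every other. I would argue by contradiction: if no two removable edges of $G$ are nonadjacent, then the removable edges form a star at a single vertex or a triangle, and combining this with $3$-connectedness, the location of $e,f$ relative to $(A,B)$, and the transfer lemma forces $H$ to be one of a short list of small bipartite graphs, whence $G$ is $K_4$, $\overline{C_6}$, or $R_8$. Showing that these are \emph{exactly} the survivors of the structural constraints, rather than merely some of them, is the delicate part, and this is where a careful case analysis on the few small configurations would be concentrated.
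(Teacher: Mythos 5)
The statement you are trying to prove is not actually proved in this paper: it is quoted verbatim as Theorem~9.17 of the Lucchesi--Murty book and used as a black box, so there is no in-paper argument to compare yours against. Judged on its own merits, your proposal has a genuine gap. Your first two steps are correct but routine: the dichotomy (a perfect matching of $G$ contains $e$ if and only if it contains $f$, by a parity count in the bipartition of $H$) and the resulting transfer lemma (if $H-g$ is matching covered and $H'-g$ has a perfect matching, then $g$ is removable in $G$) are both sound, and the barrier argument via Lemma~\ref{forbidden} you mention is not even needed for them. But everything after that is a placeholder rather than a proof. The entire content of the theorem lies in producing two \emph{nonadjacent} edges of $H$ satisfying your conditions (a) and (b) simultaneously, or else pinning $G$ down to exactly $K_4$, $\overline{C_6}$, $R_8$; for this you offer only ``peel off removable edges,'' ``genuine removable edges are forced to exist in all but small cases,'' and ``a careful case analysis \ldots would be concentrated.'' No mechanism is given that controls condition (b): an edge that is removable in the bipartite graph $H$ (e.g.\ one produced by an ear decomposition) need not leave a perfect matching in $H' - g = H - g - \{a_1,a_2,b_1,b_2\}$, and ear decompositions of $H$ say nothing about this four-vertex-deleted subgraph. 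Likewise, invoking Theorem~\ref{thm:re_in_brick} gives $\Delta(G)$ removable \emph{classes}, which may all be mutually adjacent or may be doubletons, so it does not ``corroborate'' the existence of two nonadjacent removable edges.

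The closing contradiction argument also cannot be completed as sketched: from ``all removable edges of $G$ are pairwise adjacent, hence form a star or a triangle'' you want to conclude that $H$ belongs to a short list of small bipartite graphs, but your transfer lemma only gives sufficiency of (a)+(b), not a characterization of removability in $G$, so the assumed adjacency pattern of removable edges does not translate into structural constraints on $H$ without precisely the missing analysis. In short, you have correctly reduced the problem to a statement about the bipartite graph $H$ with four marked vertices, but that reduced statement is essentially the whole theorem, a known hard result whose published proof rests on substantial machinery about near-bipartite bricks; the reduction itself is the easy ten percent.
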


Let $G$ be a matching covered graph.
A separating cut $C$ of $G$ is a \emph{robust} cut if $C$ is not tight and both $C$-contractions of $G$ are near-bricks.

\begin{thm}[\cite{CLM02II}]\label{exist-robust}
    Every nonsolid brick $G$ has a robust cut.
\end{thm}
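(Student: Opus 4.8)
The plan is to begin with the cut handed to us by nonsolidity and then refine it until both of its contractions are near-bricks. Since $G$ is nonsolid there is a separating cut $\partial(X)$ that is not tight; as $G$ is a brick it has no nontrivial tight cut, so this $\partial(X)$ is automatically nontrivial, and both $G/X$ and $G/\overline{X}$ are matching covered by the definition of a separating cut. Thus the whole content is to produce such a cut with $b(G/X)=b(G/\overline{X})=1$. The engine of the argument will be two ingredients: the additivity $b(H)=b(H/Y)+b(H/\overline{Y})$ of the brick number across a tight cut $\partial(Y)$ (immediate from the uniqueness of the tight cut decomposition), and a transitivity (pullback) principle asserting that if $\partial(A)$ is separating in $G$ and $\partial(Z)$ is a separating cut of the contraction $G/A$ with $Z$ avoiding the contraction vertex, then $\partial_G(Z)$ is again separating in $G$.

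First I would settle one side by an extremal choice. Choose, among all non-tight separating cuts, one whose smaller shore $X$ (say $|X|\le|\overline{X}|$) has minimum cardinality, and look at the contraction $G/\overline{X}$, which keeps $X$ intact. If this were not a near-brick it would carry a nontrivial tight cut $\partial(Z)$ with $Z\subsetneq X$; the pullback principle makes $\partial_G(Z)$ separating in $G$, and since $G$ is a brick this cut is nontrivial and not tight, while $|Z|<|X|$ contradicts minimality. Hence $b(G/\overline{X})=1$ for the chosen cut, and more generally the family $\mathcal{F}$ of non-tight separating cuts whose small-shore side contracts to a near-brick is nonempty.

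The hard part will be forcing the large-shore contraction $G/X$ to be a near-brick simultaneously, since minimizing the small shore gives no leverage over it. Here I would descend on the total brick number $b(G/X)+b(G/\overline{X})$ over $\mathcal{F}$. If a minimizer still had $b(G/X)\ge 2$, then $G/X$ would carry a nontrivial tight cut $\partial(W)$ with $W\subseteq\overline{X}$; the pullback principle returns a non-tight separating cut $\partial_G(W)$, and tight-cut additivity applied inside $G/X$ should force this pulled-back cut back into $\mathcal{F}$ with a strictly smaller value of the measure, a contradiction. The descent would then terminate at a cut with $b(G/X)=b(G/\overline{X})=1$, which is precisely a robust cut.

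I expect the decisive obstacles to be twofold. The first is the pullback principle itself: in the pullback of $\partial(Z)$ through $G/A$ one contraction, namely $G/\overline{Z}$, is literally a further contraction of $G/A$ and hence matching covered for free, but the \emph{other} contraction $G/Z$ keeps $A$ uncollapsed and must be shown matching covered by a parity and alternating-path analysis of the perfect matchings of $G$. The second is that the descent must avoid a contraction collapsing to brick number $0$ (a brace-like, bipartite piece): because $G$ is nonbipartite only through structure that may sit entirely inside one shore, some non-tight separating cuts genuinely have a bipartite side, so the refinement has to be steered so that the already-settled side stays at brick number exactly $1$ and the moving side lands on $1$ rather than $0$. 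Pinning down the matching-coveredness in the pullback, and checking that each refinement both preserves membership in $\mathcal{F}$ and moves the brick numbers the right way, is where essentially all of the work lies; the reduction from nonsolidity and the first minimality step are routine by comparison.
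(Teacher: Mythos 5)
A point of reference first: the paper itself gives no proof of Theorem \ref{exist-robust} --- it is quoted from \cite{CLM02II}, where it rests on a substantial theory of separating cuts --- so your proposal must stand on its own. Its first half is essentially sound. The reduction from nonsolidity is fine; the pullback (transitivity) principle you isolate is a true and standard lemma of \cite{CLM02II}, provable cleanly from the characterization that a cut $D$ of a matching covered graph is separating if and only if every edge lies in a perfect matching meeting $D$ exactly once (this disposes of your worry about the contraction $G/Z$ that keeps $A$ uncollapsed); and your extremal choice of a minimum small shore correctly forces $b(G/\overline{X})=1$. Moreover, your second ``decisive obstacle'' --- a contraction collapsing to brick number $0$ --- is easier than you fear: if one contraction of a separating cut $\partial(X)$ is bipartite, a count of the color classes shows every perfect matching of $G$ meets $\partial(X)$ exactly once, i.e.\ the cut is tight. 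Hence a nontight separating cut has both contractions nonbipartite, so ``not a near-brick'' always means $b\ge 2$, which guarantees the nontrivial tight cut your argument needs.

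The genuine gap is the descent --- precisely the step you phrase as ``should force.'' Write $\mu(C)=b(G/X)+b(G/\overline{X})$, let $D=\partial(W)$ be a nontrivial tight cut of $G_2=G/X$ with the contraction vertex $x\notin W$, and pull $D$ back to $G$. Additivity of $b$ across the tight cut $D$ inside $G_2$ gives $b(G_2)=b(G/\overline{W})+b(G_2/W)$, hence $\mu(D)-\mu(C)=b(G/W)-b(G/\overline{X})-b(G_2/W)$. Now $C$ is a separating cut of $G/W$ with contractions $G/\overline{X}$ and $G_2/W$, but nothing guarantees it is not \emph{tight} there: the perfect matchings of $G/W$ are exactly the perfect matchings of $G$ meeting $D$ once, so $C$ can be tight in $G/W$ while nontight in $G$, and in that case additivity yields $\mu(D)=\mu(C)$ --- no strict decrease, and your induction stalls. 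Even the non-strict inequality $b(G/W)\le b(G/\overline{X})+b(G_2/W)$ is an unproven (and nontrivial) subadditivity assertion about $b$ over separating cuts, and you give no argument that the pulled-back cut remains in your family $\mathcal{F}$, i.e.\ that its small-shore contraction is still a near-brick. So what you have is a plausible skeleton whose crux is missing: the simultaneous control of both contractions is the actual content of the Carvalho--Lucchesi--Murty theorem, and the measure you propose does not deliver it without substantial further lemmas of exactly the kind developed in \cite{CLM02II}.
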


\begin{thm}[\cite{CLM02II}]\label{robust_H}
    Let $G$ be a brick and $\partial(X)$ be a robust cut of $G$.
    Then, there exists a subset $X'$ of $X$ and a superset $X''$ of $X$ such that  $G/\overline{X'}$ and $G/{X''}$  are bricks and the graph $H$, obtained from $G$ by contracting $X'$ and $\overline{X''}$ to single vertices $x'$ and $\overline{x''}$, respectively, is bipartite and matching covered, where $x'$ and $\overline{x''}$ lie in different color classes of $H$.
\end{thm}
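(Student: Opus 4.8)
The plan is to realize $G$, relative to the robust cut $\partial(X)$, as two bricks glued along a bipartite connector, in accordance with the bookkeeping $b(G)=1$: although $\partial(X)$ is separating it is not tight, so the single brick of $G$ is ``shared'' by the two near-brick contractions $G/X$ and $G/\overline{X}$, and whatever lies strictly between the two copies must be a brace. Concretely, I would extract $X'$ from the near-brick $G/\overline{X}$ and, symmetrically, $X''$ from the near-brick $G/X$, and then identify the residual graph $H$ as the bipartite connector. Throughout, matching-coveredness of the intermediate graphs is guaranteed because they are contractions of $G$ along separating cuts, and splicing and contraction preserve matching-coveredness (Proposition~\ref{thm:MC_IS_MC}). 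Note also that, since $G$ is a brick, every tight cut of $G$ is trivial; as $\partial(X)$ is not tight it is therefore nontrivial, so $|X|,|\overline{X}|\ge 2$.

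To produce $X'$, I would run the tight cut decomposition of the near-brick $J:=G/\overline{X}$, peeling off its brace (bipartite) pieces one at a time until reaching its unique brick, tracking the contracted pole $\overline{x}$ (the image of $\overline{X}$). The aim is to choose $X'\subseteq X$ with $\partial(X')$ separating in $G$ so that $G/\overline{X'}$ is a brick, and I would do this extremally: take $X'$ minimal subject to $\partial(X')$ being separating and $G/\overline{X'}$ being nonbipartite (the choice $X'=X$ shows the family is nonempty). For minimal $X'$, if $G/\overline{X'}$ had a nontrivial tight cut $\partial(Y)$ with the pole in $\overline{Y}$, so that $Y\subseteq X'$, then $(G/\overline{X'})/\overline{Y}=G/\overline{Y}$ is again a single-pole contraction with $Y\subsetneq X'$; using that tight cuts are separating and that the brick count splits additively across a tight cut, I would argue that the nonbipartite side descends to such a smaller separating single-pole contraction, contradicting minimality. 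Hence $G/\overline{X'}$ has no nontrivial tight cut, i.e.\ it is a brick.

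Symmetrically, I would take $X''\supseteq X$ maximal subject to $\partial(X'')$ being separating and $G/X''$ nonbipartite, and the same extremal argument applied to the near-brick $G/X$ yields that $G/X''$ is a brick; by construction $X'\subseteq X\subseteq X''$. Let $H$ be obtained from $G$ by contracting $X'$ to $x'$ and $\overline{X''}$ to $\overline{x''}$, so $V(H)=(X''\setminus X')\cup\{x',\overline{x''}\}$. Its matching-coveredness follows since $H$ is a contraction of $G$ along the separating cuts $\partial(X')$ and $\partial(X'')$. For bipartiteness I would use brick-count conservation: the two bricks $G/\overline{X'}$ and $G/X''$ are two views of the \emph{same} single brick of $G$, so the residue $H$ carries no brick, i.e.\ $b(H)=0$, which for a matching covered graph forces $H$ to be bipartite. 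The two poles $x'$ and $\overline{x''}$ then lie in opposite color classes, because the crossing edges of $\partial(X')$ (respectively $\partial(X'')$) all join $x'$ (respectively $\overline{x''}$) to a single adjacent class, by the standard parity argument for a cut in a bipartite matching covered graph.

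The main obstacle is the extremal step establishing that the minimal $X'$ (and the maximal $X''$) actually yield bricks, together with the bipartiteness of $H$. The delicate point is that, because $\partial(X)$ is \emph{not} tight, tight cuts of the contractions $G/\overline{X}$ and $G/X$ need \emph{not} lift to tight cuts of $G$: a perfect matching of $G$ that meets $\partial(X)$ in three or more edges has no counterpart in $G/\overline{X}$, so the naive inference ``a nontrivial tight cut upstairs yields a nontrivial tight cut of the brick $G$'' is unavailable. The argument must instead be driven by robustness itself, namely the hypothesis $b(G/X)=b(G/\overline{X})=1$, together with the laminar structure of the brace pieces in the tight cut decomposition, in order to guarantee both that the pendant bipartite pieces can be absorbed into the poles (so that the shared brick is realized by the single-pole contractions $G/\overline{X'}$ and $G/X''$) and that exactly one brick is shared (so that the connector $H$ is indeed a brace).
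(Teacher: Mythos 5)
The paper does not prove Theorem~\ref{robust_H} at all: it is imported verbatim from \cite{CLM02II}, so there is no in-paper argument to compare against, and your proposal must stand as an independent proof. It does not. The decisive gap is your derivation of the bipartiteness of $H$ from ``brick-count conservation'': you assert that since $G/\overline{X'}$ and $G/X''$ are ``two views of the same single brick'' of $G$, the residue satisfies $b(H)=0$. But the brick number is additive only across \emph{tight} cuts, and the cuts $\partial(X')$ and $\partial(X'')$ are separating but cannot be tight in $G$ (a brick has no nontrivial tight cuts, and both cuts here are nontrivial). Across merely separating cuts there is no conservation law, and the robust cut itself is the counterexample: $b(G)=1$ while $b(G/X)+b(G/\overline{X})=2$. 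So the claim $b(H)=0$ --- which is precisely the content of the theorem, namely that the connector between the two bricks is a bipartite (brace-like) piece --- is unsupported, and the one mechanism you offer for it is false.

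The extremal step producing $X'$ and $X''$ is also incomplete, as you yourself concede. Take $X'$ minimal with $\partial(X')$ separating and $J'=G/\overline{X'}$ nonbipartite, and suppose $J'$ has a nontrivial tight cut $\partial(Y)$ with $Y\subseteq X'$ avoiding the pole. Then $b(J')=b(J'/Y)+b(J'/\overline{Y})$, and the unique brick of $J'$ may well lie on the pole-containing side, in which case $G/\overline{Y}=J'/\overline{Y}$ is \emph{bipartite} and your minimality is not contradicted; the alternative move of absorbing $Y$ into the pole replaces $X'$ by $X'\setminus Y$, but $\partial_G(X'\setminus Y)$ corresponds to the cut of $\{p\}\cup Y$ in $J'$ (where $p$ is the pole), which is a different cut from $\partial_{J'}(Y)$ and is not known to be separating, so the descent stalls. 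Note also that the near-brick hypothesis $b(G/X)=b(G/\overline{X})=1$ --- the only place robustness enters, and without which the statement is false --- is never operatively used in your argument: $b(G/\overline{X'})$ need not equal $1$ for $X'\subsetneq X$, so even the laminar ``peeling'' you describe is not available as stated. Your closing paragraph correctly identifies these as the delicate points (non-tight separating cuts do not transport tight cuts, and robustness must drive the absorption of the bipartite pieces into the poles), but identifying them is where the proof of \cite{CLM02II} begins, not where it ends; as written, both pivotal claims (that the two extremal contractions are bricks, and that $H$ is bipartite with the poles in opposite color classes) remain open.
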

As a type of (solid) bricks,
odd wheels play a crucial  role in our proof.
\begin{pro}[\cite{CLM06}]\label{thm:sim-six-w5}
    Let $G$ be a simple brick on six vertices.
    Then $G$ is either nonsolid or  the 5-wheel $W_5$.
\end{pro}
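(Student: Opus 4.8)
The plan is to prove the contrapositive statement: every \emph{solid} simple brick $G$ on six vertices is $W_5$. The engine is a clean description of solidity in this setting. Since $G$ is a brick, every tight cut is trivial, so any \emph{nontrivial} separating cut of $G$ is automatically non-tight and witnesses non-solidity; conversely a solid brick has no nontrivial separating cut. For $\partial(X)$ to be separating, both $\partial(X)$-contractions $G/X$ and $G/\overline{X}$ must have perfect matchings, which forces $|X|$ and $|\overline{X}|$ to be odd; on six vertices the only nontrivial possibility is a $3$--$3$ cut, i.e. $|X|=|\overline{X}|=3$.

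The key lemma I would prove is that, for $|X|=3$, the cut $\partial(X)$ is separating if and only if both $G[X]$ and $G[\overline{X}]$ are triangles. Indeed, $G/\overline{X}$ is a four-vertex graph on $X\cup\{\overline{x}\}$; as $G$ is $3$-connected every vertex of $X$ has an edge to $\overline{X}$, so $\overline{x}$ is adjacent to all three vertices of $X$. Any perfect matching of $G/\overline{X}$ matches $\overline{x}$ to one vertex of $X$ and joins the other two, so the edge from $\overline{x}$ to a vertex $v\in X$ lies in a perfect matching exactly when the remaining two vertices of $X$ are adjacent; requiring this for every $v$ makes $G/\overline{X}$ matching covered precisely when $G[X]$ is a triangle, and symmetrically for $G/X$ and $G[\overline{X}]$. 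Hence $G$ has a nontrivial separating cut if and only if $V(G)$ partitions into two vertex-disjoint triangles. Since the rim $C_5$ of $W_5$ is triangle-free, every triangle of $W_5$ uses the hub, so $W_5$ admits no such partition and is indeed solid; what remains is the converse.

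Thus the problem reduces to a finite statement: a simple, $3$-connected, bicritical, nonbipartite graph $G$ on six vertices with no two vertex-disjoint triangles must be $W_5$. I would first argue $G$ contains a triangle: being nonbipartite it has an odd cycle, and if $G$ were triangle-free its shortest odd cycle would be an induced $C_5$ whose sixth vertex has at least three neighbours on the cycle, hence (as $\alpha(C_5)=2$) two adjacent ones, producing a triangle. Fixing a triangle $T$ with complement $Y$, the set $Y$ is not a triangle, so $G[Y]$ has at most two edges, and minimum degree three forces at least five edges between $Y$ and $T$. From this configuration I would identify a vertex $h$ common to every triangle of $G$, show $G-h$ is a $5$-cycle, and conclude $h$ is joined to all of it, so $G=W_5$.

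The main obstacle is exactly this last identification, and the decisive tool is bicriticality rather than the triangle condition alone. For example, the join of a single vertex with $K_{2,3}$ is a simple, $3$-connected, nonbipartite graph on six vertices all of whose triangles share the apex (so it has no two disjoint triangles), yet it is not a brick because it fails bicriticality: deleting the apex and a suitable further vertex leaves a star $K_{1,3}$, which has no perfect matching. The work therefore lies in using the requirement that $G-u-v$ have a perfect matching for \emph{every} pair $u,v$ to rule out such degenerate ``rims'' and to force the common-hub structure; concretely I would check the finitely many degree sequences compatible with $3$-connectivity and the absence of two disjoint triangles, discarding each non-$W_5$ candidate by exhibiting either a pair $u,v$ with $G-u-v$ matchingless or a second triangle disjoint from a given one.
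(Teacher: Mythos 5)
Your proposal is correct, and it necessarily takes a different route from the paper, because the paper gives no proof of Proposition \ref{thm:sim-six-w5} at all: the statement is imported from Carvalho, Lucchesi and Murty \cite{CLM06}, where it comes out of their general theory of solid bricks. What you have is an elementary, self-contained replacement. Your engine is sound: in a brick every tight cut is trivial, so nonsolidity is equivalent to the existence of a nontrivial separating cut, which on six vertices must by parity be a $3$--$3$ cut; and your key lemma---that for $|X|=3$ the cut $\partial(X)$ is separating if and only if both $G[X]$ and $G[\overline{X}]$ are triangles---is proved correctly (3-connectedness gives every vertex of $X$ an edge into $\overline{X}$, and matching coveredness of the four-vertex contraction then forces the triangle). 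This reduces the proposition to the finite claim you state, your induced-$C_5$ argument for the existence of a triangle is fine, and your $K_1\vee K_{2,3}$ example rightly isolates bicriticality as the constraint that the no-disjoint-triangles condition alone cannot supply. The one step you leave as a sketch---forcing the hub---does close, and faster than a degree-sequence enumeration: fix a triangle $T=\{a,b,c\}$ with complement $Y=\{x,y,z\}$ and split on the number of edges of $G[Y]$, which is $0$, $1$ or $2$. With $0$ edges, minimum degree three forces $G=K_3\vee\overline{K_3}$, which fails bicriticality since $G-\{a,b\}$ is a star. With $1$ edge, say $xy$, the vertex $z$ is joined to all of $T$, while $x$ and $y$ each have two neighbours in $T$ and hence a common neighbour $t$; then $xyt$ and the triangle on $(T\setminus\{t\})\cup\{z\}$ are disjoint, a contradiction. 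With $2$ edges, say $xy$ and $yz$, the same disjoint-triangle trick first shows $x$ and $z$ each see exactly two vertices of $T$; if $N(x)\cap T=N(z)\cap T=\{a,b\}$, then degree forces $cy\in E(G)$ and $G-\{a,b\}$ is a star centred at $y$, violating bicriticality, while if the two pairs share exactly one vertex $b$, the no-disjoint-triangles conditions force $N(y)\cap T=\{b\}$ and the resulting ten-edge graph is precisely $W_5$ with hub $b$. So your plan is complete in all essentials, and it buys a direct verification of the six-vertex case (plus the tidy structural fact that on six vertices nontrivial separating cuts correspond exactly to partitions into two triangles), whereas the paper's citation buys brevity by leaning on machinery far heavier than this small case needs.
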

\begin{thm}[\cite{CLM06}]\label{thm:planar_solid}
    Every simple planar solid brick $G$ is an odd wheel.
\end{thm}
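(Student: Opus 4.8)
The plan is to argue by induction on the even integer $n=|V(G)|$, proving the contrapositive of the inductive step: a simple planar brick that is \emph{not} an odd wheel cannot be solid. For the base cases, the only simple brick on four vertices is $K_4=W_3$, and by Proposition~\ref{thm:sim-six-w5} a simple brick on six vertices is either nonsolid or the odd wheel $W_5$; both outcomes are consistent with the claim. So fix $n\ge 8$, suppose $G$ is a simple planar solid brick that is not an odd wheel, and fix a plane embedding of $G$. Since bricks are $3$-connected, this embedding is unique up to reflection and every face is bounded by a cycle. The goal is to exhibit a separating cut of $G$ that is not tight, directly contradicting solidity.

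The engine is bicriticality. Because $G$ is bicritical, $G-h$ is factor-critical for every vertex $h$, so every vertex lies on many odd cycles and $G$ is far from having a single vertex that meets all of them. The crux is to convert ``$G$ is not a wheel'' into genuinely separated odd structure. Concretely, I would show that a non-wheel planar brick contains a cycle $C$ having vertices strictly inside and strictly outside it in the embedding; equivalently (using the odd-ear structure of the factor-critical graphs $G-h$) that $G$ contains two vertex-disjoint odd cycles $C_1,C_2$ for which $G-V(C_1)-V(C_2)$ has a perfect matching $M_0$. The Jordan curve theorem is what links the topology to this combinatorial conclusion: in a wheel every cycle has a vertex-free side (the hub lies inside the rim, and every cycle through the hub bounds only triangular faces), so the existence of a cycle with both sides occupied, or of two disjoint odd cycles on opposite sides of a separating curve, is precisely the obstruction to being a wheel.

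Given $C_1$, $C_2$ and $M_0$, I would grow $C_1$ into a vertex set $X$ of odd cardinality by adjoining, along the edges of $M_0$, the matched vertices lying on $C_1$'s side of the embedding, arranged so that $C_2\subseteq\overline{X}$. Each $\partial(X)$-contraction then inherits a perfect matching (from $M_0$ together with near-perfect matchings of the odd cycles), so both contractions are matching covered; by Proposition~\ref{thm:MC_IS_MC} and the definition of a separating cut, $\partial(X)$ is separating. Because $|X|$ is odd, every perfect matching of $G$ meets $\partial(X)$ an odd number of times, and the odd cycle $C_1$ together with the padding forces some perfect matching to cross $\partial(X)$ at least three times; hence $\partial(X)$ is separating but not tight, contradicting solidity. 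Therefore no such $G$ exists and the induction closes, giving $G=W_k$ for some odd $k$.

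The main obstacle is the middle step: turning the purely topological statement ``$G$ is not a wheel'' into two vertex-disjoint odd cycles with a matchable complement. This is where planarity, $3$-connectivity, and bicriticality must be combined carefully---essentially a planar instance of the Lov\'asz theory of disjoint odd cycles, specialized to bricks---and it is also where the structural theory of solid bricks developed in \cite{Lucchesi2024} does the real work. The padding and parity bookkeeping in the final step, and the verification that both $\partial(X)$-contractions are matching covered, are routine by comparison, and simplicity of $G$ means there are no multiple edges to track.
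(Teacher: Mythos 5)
There is no internal proof to compare against here: the paper quotes this theorem from \cite{CLM06} without proof, so your attempt has to stand on its own, and it does not. The central step---converting ``simple planar brick that is not an odd wheel'' into two vertex-disjoint odd cycles $C_1,C_2$ such that $G-V(C_1)-V(C_2)$ has a perfect matching---is essentially the entire content of the theorem, and you explicitly defer it (``the main obstacle \ldots where the structural theory \ldots does the real work''). The surrounding heuristics do not close it: the claim that in a wheel every cycle has a vertex-free side is an observation about wheels, not a proof that every non-wheel planar brick contains a cycle with both sides occupied; and even granting such a cycle, nothing in your sketch produces \emph{odd} cycles, makes them disjoint, or arranges the complement to be matchable---the parity bookkeeping is precisely where planarity, $3$-connectivity and bicriticality must interact, and it is what the published proof actually does. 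Note also that your induction is idle: the argument for $n\ge 8$ never invokes the hypothesis for smaller orders, so the ``induction'' is just a direct case analysis with the hard case missing.

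There is in addition a genuine error in the final step. From the existence of one perfect matching in each $\partial(X)$-contraction you conclude that ``both contractions are matching covered''; that is a non sequitur, since matching covered requires \emph{every} edge to lie in a perfect matching (equivalently, $\partial(X)$ is separating iff for each edge $e$ there is a perfect matching containing $e$ and meeting $\partial(X)$ exactly once), which is far stronger than exhibiting a single perfect matching per contraction. Proposition~\ref{thm:MC_IS_MC} points in the opposite direction (a splicing of matching covered graphs is matching covered) and cannot certify that a given cut of $G$ is separating. Similarly, ``forces some perfect matching to cross $\partial(X)$ at least three times'' is asserted, not shown. The implication you actually need---two disjoint odd cycles with matchable complement imply the brick is nonsolid---is a known but nontrivial theorem of Carvalho, Lucchesi and Murty (see the treatment of solid bricks in \cite{Lucchesi2024}); you may of course invoke it, but then the only remaining content of your proposal is the middle step you admit is open. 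As written, the proposal is a plan whose substance sits entirely inside the acknowledged gap.
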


 Let $\partial (X)$ be a separating cut of  matching covered graph $G$  and $e\in E(G)$.  If $e\notin E(G/X)$, we also say $e$ is removable in $G/X$. The following results are about the removable edges in $C$-contractions.
\begin{lem}[Corollary 8.9 in \cite{Lucchesi2024}]\label{thm:re_also_re}
    Let $G$ be a matching covered graph, and let $C$ be a separating cut of $G$.
    If an edge $e$ is removable in both $C$-contractions of $G$, then  $e$ is removable in $G$.
\end{lem}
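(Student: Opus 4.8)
The plan is to recognize both $G$ and $G-e$ as splices of their two $C$-contractions and then to invoke Proposition \ref{thm:MC_IS_MC}. Write $C=\partial(X)$, and let $x^{*}$, $\overline{x}^{*}$ denote the vertices of $G/X$ and $G/\overline{X}$ arising from contracting $X$ and $\overline{X}$. Since $E(G/X)=E(G[\overline{X}])\cup C$ and $E(G/\overline{X})=E(G[X])\cup C$, each cut edge $c=ab\in C$ (with $a\in X$, $b\in\overline{X}$) appears once in $G/X$ as $x^{*}b$ and once in $G/\overline{X}$ as $a\overline{x}^{*}$, so the natural bijection $\theta$ pairing $c$ with itself yields $\bigl(G/X)(x^{*})\odot(G/\overline{X})(\overline{x}^{*}\bigr)_{\theta}=G$. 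This is exactly the converse recorded after Proposition \ref{thm:MC_IS_MC}: taking the two $C$-contractions of a splice returns its two factors, so any matching covered graph equipped with a cut is the splice of its two contractions. Note also that $E(G/X)\cap E(G/\overline{X})=C$, so an edge is genuinely present in \emph{both} contractions precisely when it lies in $C$; this is what makes the paper's convention (declaring $e$ ``removable in $G/X$'' when $e\notin E(G/X)$) interact correctly with the hypothesis.

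The core step is to show that, under the hypothesis, both $C$-contractions of $G-e$ are matching covered; then $G-e$ is a splice of two matching covered graphs, hence matching covered (in particular connected and nontrivial) by Proposition \ref{thm:MC_IS_MC}, giving removability of $e$ in $G$. I would split into three cases by the location of $e$. If $e\in C$, then $\partial_{G-e}(X)=C\setminus\{e\}$, deletion commutes with contraction, and the two contractions of $G-e$ are $(G/X)-e$ and $(G/\overline{X})-e$; both are matching covered because here both removability hypotheses are genuine ($e$ is an edge of both contractions). If $e$ has both ends in $X$, then $\partial(X)$ is unchanged by deleting $e$, one contraction is $(G-e)/X=G/X$, which is matching covered because $C$ is separating, and the other is $(G-e)/\overline{X}=(G/\overline{X})-e$, which is matching covered because $e$ is genuinely removable in $G/\overline{X}$ (the ``removable in $G/X$'' hypothesis being the vacuous convention, as $e\notin E(G/X)$). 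The case where $e$ has both ends in $\overline{X}$ is symmetric. In every case both $C$-contractions of $G-e$ are matching covered, completing the reduction.

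I do not expect a deep combinatorial obstacle here: once $G$ is identified as a splice, the whole argument is bookkeeping around how deletion interacts with the cut, and the genuine matching-theoretic content is carried entirely by Proposition \ref{thm:MC_IS_MC} (whose underlying mechanism is that a perfect matching of $G-e$ through a prescribed edge can be assembled from perfect matchings of the two contractions that agree on their single common cut edge). The one point demanding care is the case in which $e$ lies strictly inside $X$ or $\overline{X}$, where one contraction is \emph{literally unchanged} by the deletion; there the vacuously stated removability on that side must be matched against the fact that the corresponding contraction stays matching covered simply because $C$ is separating. I would also remark that degenerate situations such as $|C|=1$ never need separate treatment, since a single-edge cut would already make $(G/X)-e$ disconnected and hence violate the hypothesis, so they cannot arise when the conclusion is being claimed.
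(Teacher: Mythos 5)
The paper offers no proof of this lemma---it is quoted verbatim as Corollary 8.9 of \cite{Lucchesi2024}---so there is no internal argument to compare against; your proof is correct and is exactly the standard route. Identifying $G-e$ as the splice of its two $C$-contractions, verifying in each of the three cases (with the paper's convention supplying the vacuous hypothesis on the side not containing $e$, and the separating property of $C$ supplying matching covered-ness of the unchanged contraction) that both $C$-contractions of $G-e$ are matching covered, and then invoking Proposition~\ref{thm:MC_IS_MC} settles the claim, with your closing remark correctly disposing of the degenerate $|C|=1$ situation.
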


\begin{thm}[Lemma 3.1 in \cite{CLM02II}]\label{thm:removable doubleton}
    Let $C=\partial(X)$ be a separating but not tight cut of a matching covered graph $G$ and let $H=G/\overline{X}$.
    Assume that $H$ is a brick, and let $R$ be a removable doubleton of $H$.
    If $R\cap C=\emptyset$ or if the edge of $R\cap C$ is removable in $G/X$ then $R-C$ contains an edge which is removable in $G$.
\end{thm}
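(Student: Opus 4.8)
The plan is to prove the \emph{stronger, cleaner} statement that a specific edge of $R-C$ is removable, namely the edge of $R$ whose ends lie in the color class \emph{not} containing the contraction vertex. Write $H=G/\overline X$ and let $\overline x$ be the vertex of $H$ obtained by contracting $\overline X$. Since $H$ is a brick carrying the removable doubleton $R$, it is near-bipartite, so $H-R$ is a matching covered bipartite graph with color classes $A$ and $B$, one edge of $R$ having both ends in $A$ and the other both ends in $B$. Arrange notation so that $\overline x\in A$, let $e$ be the edge of $R$ with both ends in $A$ and $f$ the edge with both ends in $B$. Then $B\subseteq V(G)$ carries no contracted vertex, $|A|=|B|=:n$, so $|X|=2n-1$ is odd, and every edge of $C=\partial_H(\overline x)$ has its $X$-end in $B$, with the sole exception (precisely the case $R\cap C\neq\emptyset$) of $e$ itself, whose $X$-end is the $A$-neighbour $a$ of $\overline x$. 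In either case $f\in R-C$, and I will show $f$ is removable in $G$, which is all the statement asks for.

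I first record two reductions. \textbf{(R1)} The graph $G-e-f$ is matching covered: the cut $C$ is separating in $G-e-f$, since one $C$-contraction is $(G-e-f)/\overline X=H-R$ (bipartite and matching covered) and the other is $G/X$ (when $R\cap C=\emptyset$, as then $e,f\notin E(G/X)$) or $(G/X)-e$ (when $e\in C$, matching covered by the hypothesis that $e$ is removable in $G/X$); hence $G-e-f$, being the splicing of these, is matching covered by Proposition \ref{thm:MC_IS_MC}. \textbf{(R2)} For every perfect matching $M$ of $G$, counting how the $n$ vertices of $B$ are covered (by $f$, by $A$--$B$ edges inside $X$, and by cut edges landing in $B$) yields
\[
|M\cap C| \;=\; 1+2\bigl(\epsilon_e(M)-\epsilon_f(M)\bigr),
\]
where $\epsilon_e(M),\epsilon_f(M)\in\{0,1\}$ record whether $e,f\in M$. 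As $|M\cap C|\ge 0$, no perfect matching contains $f$ while omitting $e$, whereas one containing $e$ but not $f$ must use exactly three cut edges.

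These reductions localize the whole problem to a single question. Every edge of $G$ other than $e$ lies in some perfect matching of $G-e-f$ by (R1), and any such matching avoids both $e$ and $f$; hence $G-f$ can fail to be matching covered only through $e$ being forbidden in it. Therefore $f$ is removable in $G$ \emph{if and only if} $G$ has a perfect matching containing $e$ but not $f$, equivalently, by Lemma \ref{forbidden}, if and only if the two ends of $e$ do not lie in a common barrier of the matching covered graph $G-e-f$. Non-tightness is exactly what makes this plausible: because $C$ is separating but not tight, $|\overline X|\ge 2$, and since $|X|=2n-1$ is odd so is $|\overline X|$, whence $|\overline X|\ge 3$. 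By (R2) the sought matching crosses $C$ in three edges $F=\{e,\gamma_1,\gamma_2\}$, and the two extra cut edges are what absorb, into the shore $\overline X$, the color imbalance created by using $e$ while dropping $f$. Fixing $F$ splits the task, by the independence of the two shores once $M\cap C$ is prescribed, into finding a perfect matching of the balanced bipartite graph $(H-R)[X]$ after deleting $a$ and the two $B$-ends of $\gamma_1,\gamma_2$, together with a perfect matching of $G[\overline X]$ after deleting the three $\overline X$-ends of $F$.

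The main obstacle is to secure such a choice of $\gamma_1,\gamma_2$, i.e. to rule out any barrier of $G-e-f$ containing both ends of $e$. I would argue by contradiction from such a barrier $S$, transferring it across the non-tight cut $C$ onto the two shores: the bipartite structure of $H-R$ (every cut edge lands in $B$, and $A,B$ are its only large barriers), combined with the bicriticality of the brick $H$, which admits no barrier of size at least two, should force a contradiction, while the hypothesis that $e$ is removable in $G/X$ is precisely what excludes an obstruction localized on the shore $\overline X$. The delicate step, and the one I expect to demand the most care, is this barrier transfer: because $S$ may meet $\overline X$ only partially, it does not contract to a barrier of $H-R$ automatically, and controlling the mixed intersection $S\cap\overline X$ — exploiting that every cut edge lands in $B$ and that $H$ has no barrier of size at least two — is where the substance of the proof lies.
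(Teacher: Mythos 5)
Your proposal is not yet a proof: the reductions are sound, but the one substantive step is left as a plan. Concretely, (R1), the counting identity (R2), and the equivalence ``$f$ is removable in $G$ if and only if $G$ has a perfect matching containing $e$ but not $f$'' are all correct (I checked the two cases $e\in C$ and $e\notin C$ in the derivation of (R2); both give $|M\cap C|=1+2(\epsilon_e(M)-\epsilon_f(M))$). But your final paragraph only announces a strategy --- transferring a hypothetical barrier $S$ of $G-e-f$ across $C$ --- and you yourself concede that the key difficulty (controlling the mixed intersection $S\cap\overline{X}$, which prevents $S$ from contracting to a barrier of $H-R$) ``is where the substance of the proof lies.'' That step is never carried out, so the argument as written is incomplete. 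Worse, the sketch never identifies where the hypothesis that $C$ is \emph{not tight} enters the barrier analysis, yet it must enter essentially: if $C$ were tight, every perfect matching would meet $C$ in exactly one edge, your identity (R2) would show that no perfect matching contains $e$ without $f$, and $f$ would \emph{not} be removable.

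The irony is that your own (R2) closes the gap in two lines, making the entire barrier-transfer program unnecessary. Since $|X|=2n-1$ is odd, every perfect matching of $G$ meets $C$ in an odd number of edges; since $C$ is separating but not tight, some perfect matching $M$ satisfies $|M\cap C|\neq 1$. Your identity forces $|M\cap C|\in\{1,3\}$ (the value $-1$ being impossible), so $|M\cap C|=3$, and equality holds precisely when $\epsilon_e(M)=1$ and $\epsilon_f(M)=0$, i.e., $e\in M$ and $f\notin M$. By your reduction, $f$ is therefore removable in $G$, completing the proof --- and in fact establishing your stronger claim that the specific edge $f$ of $R-C$ (the one whose ends lie in the color class not containing $\overline{x}$) is removable. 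Note that the paper itself imports this statement from \cite{CLM02II} without proof, so there is no in-paper argument to compare against; but the repaired version of your proof is exactly the kind of self-contained parity argument one would want, and I encourage you to rewrite the last paragraph accordingly rather than pursue the barrier transfer.
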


\begin{lem}\label{lem:removable doubleton}
    Assume that $G$ is a brick, and  $\partial(X)$ is an edge cut of $G$ such that both $G/(X\rightarrow x)$ and $G/(\overline{X}\rightarrow \overline{x})$
    are brick. If $e\in \partial(X)$, and $\{e,f\}$ and $\{e,g\}$ are removable doubletons of $G/X$ and $G/\overline{X}$, respectively, then  $\{f,g\}$ is a removable doubleton of $G$.
    \end{lem}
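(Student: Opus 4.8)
The plan is to verify the three defining properties of a removable doubleton for $\{f,g\}$: that $G-f-g$ is matching covered, while neither $G-f$ nor $G-g$ is. First I would fix coordinates. Since $H_1:=G/X$ and $H_2:=G/\overline{X}$ are bricks they each have at least four vertices, so $|X|,|\overline{X}|\geq 3$, the cut $\partial(X)$ is nontrivial, and (as $G$ is a brick) it is separating but not tight. Write $e=xu_1$ in $H_1$ and $e=\overline{x}\,v_1$ in $H_2$, so that in $G$ we have $e=u_1v_1$ with $u_1\in\overline{X}$ and $v_1\in X$. Because $\{e,f\}$ is a removable doubleton of the brick $H_1$, the graph $D_1:=H_1-e-f$ is bipartite and matching covered with the two ends $x,u_1$ of $e$ in one colour class $A_1$ and both ends of $f$ in the other class $B_1$; in particular $f\in E(G[\overline{X}])$, each edge of $C-e$ (where $C=\partial(X)$) has its end in $\overline{X}$ lying in $B_1$, and every perfect matching of $H_1$ through $e$ also contains $f$. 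Symmetrically $D_2:=H_2-e-g$ is bipartite matching covered with $\overline{x},v_1\in A_2$ and both ends of $g$ in $B_2\subseteq X$. Setting $A_1'=A_1-x$, $A_2'=A_2-\overline{x}$, $L=A_1'\cup B_2$ and $R=B_1\cup A_2'$, one checks that $G-e-f-g=D_1(x)\odot D_2(\overline{x})$ (splicing across $C-e$), so by Proposition~\ref{thm:MC_IS_MC} it is matching covered, hence connected; it is bipartite with parts $L,R$, and $e$ joins $u_1\in A_1'\subseteq L$ to $v_1\in A_2'\subseteq R$, so $G-f-g$ is bipartite with the same bipartition.

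Next I would show that $G-f$ is not matching covered by proving that $g$ is forbidden in it. Suppose some perfect matching $M$ of $G-f$ contains $g$, and count how $M$ meets $X=A_2'\cup B_2$. Since $|B_2|=|A_2'|+1$, the edge $g$ covers two vertices of $B_2$, the edges of $M\cap(C-e)$ cover further vertices of $B_2$, the edge $e$ (if present) covers $v_1\in A_2'$, and the remaining $X$-vertices are covered by $A_2'$–$B_2$ edges of $D_2$. Balancing the two colour classes forces $M\cap C=\{e\}$. But then the restriction of $M$ to $\overline{X}$ together with $e$ is a perfect matching of $H_1$ through $e$ that avoids $f$, contradicting that $\{e,f\}$ is a removable doubleton of $H_1$. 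Hence $G-f$ is not matching covered, and the symmetric argument (with $f$ forbidden in $G-g$) disposes of $G-g$.

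It remains to prove that $G-f-g$ is matching covered, which is the heart of the matter. Every edge of $D:=G-e-f-g$ already lies in a perfect matching of $D$, and such a matching is a perfect matching of $G-f-g$; so the only thing to check is that $e$ lies in some perfect matching of $G-f-g$, equivalently that $D-u_1-v_1$ has a perfect matching. I would argue by contradiction. If it has none, then since $D$ is bipartite with equal parts, Hall's condition fails for $D-u_1-v_1$: there is a nonempty $S\subseteq L\setminus\{u_1\}$ with $|N_D(S)\setminus\{v_1\}|<|S|$. As $D$ is matching covered, $|N_D(S)|\geq|S|$, forcing $|N_D(S)|=|S|$ and $v_1\in N_D(S)$. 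Put $N=N_D(S)$ and $W=S\cup N$; then $\partial_D(W)\subseteq E[N,L\setminus S]$, because $S$ has no neighbour outside $N$. But every perfect matching of $D$ matches all of $S$ into $N$, hence (as $|S|=|N|$) all of $N$ back into $S$, so no edge of $E[N,L\setminus S]$ lies in any perfect matching; matching covering forces $E[N,L\setminus S]=\emptyset$, whence $\partial_D(W)=\emptyset$. Since $u_1\notin W$ and $S\neq\emptyset$, the set $W$ is a proper nonempty union of components of $D$, contradicting the connectedness of the matching covered graph $D$. Therefore $e$ is not forbidden, $G-f-g$ is matching covered, and the three conclusions together with the definition give that $\{f,g\}$ is a removable doubleton of $G$.

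The main obstacle is exactly this last step. Because neither $f$ nor $g$ is individually removable, $G-f-g$ is a splicing of the two \emph{non}-matching-covered graphs $H_1-f$ and $H_2-g$, and $e$ is forbidden in each of them, so one cannot glue perfect matchings across the cut $C$ in the naive way (indeed every perfect matching of $G$ through $e$ meeting $C$ only in $e$ must use both $f$ and $g$). The device that unlocks it is to delete $e$ as well: the reduced cut $C-e$ has matching covered contractions $D_1,D_2$, so $G-e-f-g$ is matching covered and in particular connected, and this connectedness is precisely what a Hall deficiency for the single edge $e$ would violate. I expect the bookkeeping in the second paragraph (the colour-class balance forcing $M\cap C=\{e\}$) and the verification that $G-e-f-g$ is genuinely the splicing $D_1(x)\odot D_2(\overline{x})$ to be the only places demanding care.
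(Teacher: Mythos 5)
Your proof is correct and takes essentially the same route as the paper: both identify $G-e-f-g$ with the splicing of the two bipartite matching covered graphs $D_1=G/X-e-f$ and $D_2=G/\overline{X}-e-g$, invoke Proposition \ref{thm:MC_IS_MC} to see it is matching covered, and then exploit the resulting bipartition of $G-f-g$ to handle $e$, $f$ and $g$. The only difference is one of packaging: where the paper cites Theorem 4.1.1 of \cite{LP86} to conclude that the bipartite graph $G-f-g$ (having the spanning matching covered subgraph $G-e-f-g$ with the same colour classes) is matching covered, and dismisses $G-f$ and $G-g$ by a one-line parity count, you prove these two steps from scratch via Hall's theorem and a slightly more roundabout (but valid) counting across the cut $\partial(X)$.
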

    \begin{proof} Obviously, $d_{G/X}(x)=d_{G/\overline{X}}(\overline{x}) $. Let $H_1= G/X-\{e,f\}$ and $H_2=G/\overline{X}-\{e,g\}$. As $e\in \partial(X)$,   $d_{H_1}(x)=d_{H_2}(\overline{x}) $. Then $G-\{e,f,g\}$ is isomorphic to $H_1(x) \odot H_2(\overline{x})$.
    By the definition of removable doubletons, $H_1$ and $H_2$ are matching covered bipartite graphs. Therefore, $G-\{e,f,g\}$  is matching covered by Proposition \ref{thm:MC_IS_MC}.
    Assume that $A_i$ and $B_i$ are the two color classes of $H_i$ such that both ends of $e$ lie in $B_i$ for $i=1,2$. Then $G-\{e,f,g\}$  is a bipartite graph with two color classes: $A_1\cup B_2\setminus\{\overline{x}\}$ and $A_2\cup B_1\setminus\{{x}\}$. Note that $E_G[B_1, B_2]=\{e\}$. So $G-\{f,g\}$ is bipartite. And then it is matching covered by Theorem 4.1.1 in \cite{LP86}. As  two ends of $f$  lie in $A_1$, and two ends of $g$  lie in $A_2$,
      $G-f$ and $G-g$ are not matching covered by a simple computation. Therefore, the result follows.
    \end{proof}

For nonremovable edges in a bipartite graph, we have the following result.

\begin{lem}[{Lemma 8.2 in \cite{Lucchesi2024}}]
\label{lem:nonre-bi}
Let $G$ be a bipartite matching covered graph with $A$ and $B$ as its color classes, and $|E(G)|\ge2$.
An edge $uv$ of $G$, with $u\in A$ and $v\in B$, is not removable in $G$ if and only if there exist nonempty proper subsets $A_1$ and $B_1$ of $A$ and $B$, respectively, such that:\\

{\rm 1)} the subgraph $G[A_1\cup B_1]$  is matching covered, and

{\rm 2)} $u\in A_1$ and $v\in B\setminus B_1$, and $E[A_1,B\setminus B_1 ]=\{uv\}$.
\end{lem}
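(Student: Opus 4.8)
The plan is to translate removability of $uv$ into a Hall-type condition on the color classes. I will use the standard fact (see \cite{LP86}) that a connected bipartite graph with balanced color classes $A,B$ is matching covered if and only if $|N(S)|>|S|$ for every nonempty proper subset $S\subseteq A$; deleting $uv$ can only violate this, and a violation records exactly the desired sets. First I would record two reductions that use the hypothesis $|E(G)|\ge 2$. No vertex of $G$ has degree $1$, since a degree-$1$ vertex would force its unique incident edge into every perfect matching and thereby leave a neighbouring edge forbidden, contradicting that $G$ is matching covered; hence $uv$ lies outside some perfect matching and $G-uv$ still has a perfect matching. Moreover $G-uv$ is connected: if $uv$ were a bridge separating off the side $X$ containing $u$, then a parity count of $X$ against the perfect matchings of $G$ would force $uv$ out of every perfect matching, making $uv$ forbidden in $G$, a contradiction. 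Thus ``$uv$ not removable'' becomes ``$G-uv$ is connected, has a perfect matching, but is not matching covered.''

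For sufficiency, suppose $A_1,B_1$ as stated exist and set $X=A_1\cup B_1$. Since $G[X]$ is matching covered it is balanced, so $|A_1|=|B_1|$, and the cut is $\partial(X)=E[A_1,B\setminus B_1]\cup E[B_1,A\setminus A_1]=\{uv\}\cup E[B_1,A\setminus A_1]$. In any perfect matching of $G-uv$ every vertex of $A_1$ must be matched inside $B_1$, since $E[A_1,B\setminus B_1]=\{uv\}$ and $uv$ has been deleted, leaving all neighbours of $A_1$ in $B_1$; a counting argument using $|A_1|=|B_1|$ then forces every vertex of $B_1$ to be matched inside $A_1$, so no edge of $E[B_1,A\setminus A_1]$ is used. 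Consequently, if $E[B_1,A\setminus A_1]\neq\emptyset$ then each of its edges is forbidden in $G-uv$, and if $E[B_1,A\setminus A_1]=\emptyset$ then $uv$ is a bridge and $G-uv$ is disconnected. In either case $G-uv$ is not matching covered, so $uv$ is not removable.

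For necessity, assume $uv$ is not removable. By the reductions, $G-uv$ is connected with a perfect matching but not matching covered, so strict Hall fails there: there is a nonempty proper $S\subseteq A$ with $|N_{G-uv}(S)|=|S|$ (equality, not strict inequality, since the perfect matching guarantees Hall's condition). As $G$ is matching covered we have $|N_G(S)|\ge|S|+1$, so deleting $uv$ must have shrunk the neighbourhood; this forces $u\in S$, forces $v$ to be a neighbour of $S$ reached in $G$ only through $u$, and forces $|N_G(S)|=|S|+1$ with $N_{G-uv}(S)=N_G(S)\setminus\{v\}$. I then choose $S$ \emph{minimal} with these properties and set $A_1=S$ and $B_1=N_G(S)\setminus\{v\}$. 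Then $|A_1|=|B_1|$, $u\in A_1$, $v\in B\setminus B_1$, and because every edge leaving $A_1$ ends in $N_G(A_1)=B_1\cup\{v\}$ with $v$ reachable only via $u$, we obtain $E[A_1,B\setminus B_1]=\{uv\}$, as required.

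The remaining and principal obstacle is to show that $G[A_1\cup B_1]$ is matching covered, which is exactly what the minimality of $S$ buys. Restricting a perfect matching of $G-uv$, which matches $S$ bijectively onto $B_1$, shows $G[A_1\cup B_1]$ has a perfect matching. For strict Hall inside $G[A_1\cup B_1]$, suppose some nonempty proper $T\subsetneq A_1$ had at most $|T|$ neighbours in $B_1$; since all neighbours of $T$ in $G-uv$ lie in $N_{G-uv}(S)=B_1$, this would give $|N_{G-uv}(T)|\le|T|$, and the existing perfect matching forces equality, making $T$ a smaller deficient set and contradicting the minimality of $S$. Hence strict Hall holds, $G[A_1\cup B_1]$ is connected and matching covered, completing the proof. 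I expect the bookkeeping around which sets stay ``tight'' after deleting $uv$, together with this minimality argument securing matching-coveredness of $G[A_1\cup B_1]$, to be the delicate step; the sufficiency direction and the degree and connectivity reductions are routine.
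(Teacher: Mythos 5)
Your proof is correct, but there is nothing in the paper to compare it against: the paper imports this statement verbatim as Lemma 8.2 of Lucchesi and Murty \cite{Lucchesi2024} and gives no proof of it, so your argument is necessarily an independent one. It is the natural self-contained route via the strict Hall condition, and it works. The reductions are sound (no degree-one vertices, hence some perfect matching of $G$ avoids $uv$; and $uv$ cannot be a bridge, so $G-uv$ is connected with a perfect matching). Sufficiency is a correct counting argument: any perfect matching of $G-uv$ matches $A_1$ bijectively onto $B_1$, so either every edge of $E[B_1,A\setminus A_1]$ is forbidden in $G-uv$ or, if that set is empty, $G-uv$ is disconnected. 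In necessity, the key observations are all correct: any tight set $S$ of $G-uv$ (one with $|N_{G-uv}(S)|=|S|$) must contain $u$ and must see $v$ through the single edge $uv$ only, since otherwise $|N_G(S)|=|N_{G-uv}(S)|=|S|$ would contradict strict Hall in $G$; and taking $S$ inclusion-minimal forces strict Hall, and hence matching-coveredness, inside $G[A_1\cup B_1]$, because any violating $T\subsetneq A_1$ would itself be a smaller tight set of $G-uv$. Two points you gloss but that do hold and deserve a sentence each: first, connectivity of $G[A_1\cup B_1]$ is a hypothesis of the ``standard fact'' you invoke, so it must be deduced before that fact is applied (a disconnected balanced bipartite graph with a perfect matching has a component meeting $A_1$ in a nonempty proper subset that violates strict Hall); second, since this paper allows multiple edges, one should say explicitly that the shrinking of $N(S)$ upon deleting the single edge $uv$ already forces $E_G[S,\{v\}]=\{uv\}$ --- in particular there is no parallel copy of $uv$ --- which is exactly what upgrades the neighbourhood statement to the required edge-set equality $E[A_1,B\setminus B_1]=\{uv\}$. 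With those two clarifications your proof is complete.
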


\section{Lemmas}
In this section, we will present some  useful lemmas.
\begin{pro}\label{pro:uvnonremovable}
    Let $G$ be a matching covered graph.
    Assume that $N(u)=\{u_1,u_2,u_3\}$ and $G[\{u,u_2,u_3\}]$ is a triangle.
    Then $uu_1$ is not removable.
\end{pro}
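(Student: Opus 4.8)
The plan is to show that deleting $uu_1$ turns the triangle edge $u_2u_3$ into a forbidden edge, so that $G-uu_1$ fails to be matching covered and hence $uu_1$ is not removable. The whole argument hinges on the fact that $u$ has degree exactly three in $G$ and that two of its three incident edges lie in a triangle, so that cutting the third one leaves $u$ ``trapped'' inside the triangle.

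First I would record that, since $G[\{u,u_2,u_3\}]$ is a triangle, the edge $u_2u_3$ belongs to $E(G)$ and survives in $G-uu_1$. In $G-uu_1$ the vertex $u$ is adjacent only to $u_2$ and $u_3$, because its third neighbor $u_1$ has been disconnected from it. Hence in every perfect matching $M$ of $G-uu_1$ the unique edge of $M$ covering $u$ is either $uu_2$ or $uu_3$, so at least one of $u_2,u_3$ is matched to $u$. From this I would conclude that $u_2u_3$ lies in no perfect matching of $G-uu_1$: if $u_2u_3\in M$, then $u_2$ and $u_3$ are matched to each other, leaving $u$ with no available neighbour and contradicting that $M$ covers $u$. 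Thus $u_2u_3$ is forbidden in $G-uu_1$ (equivalently, by Lemma~\ref{forbidden}, one could produce a barrier of $G-uu_1$ containing both $u_2$ and $u_3$), so $G-uu_1$ is not matching covered, which is precisely the assertion that $uu_1$ is not removable.

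There is essentially no technical obstacle here; the proposition is a short structural observation rather than a deep fact. The only point deserving a moment's care is to make the conclusion hold uniformly regardless of whether $G-uu_1$ even admits a perfect matching. If $G-uu_1$ has no perfect matching it is trivially not matching covered, and if it does, the degree-two argument above shows $u_2u_3$ appears in none of them; in either case $uu_1$ is not removable, completing the proof.
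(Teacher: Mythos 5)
Your proof is correct and takes essentially the same route as the paper: both arguments show that the triangle edge $u_2u_3$ becomes forbidden in $G-uu_1$, so $G-uu_1$ is not matching covered. The paper phrases this via the barrier $\{u_2,u_3\}$ and Lemma~\ref{forbidden}, while you derive forbiddenness directly from the observation that any perfect matching of $G-uu_1$ must match $u$ to $u_2$ or $u_3$ --- the same idea in elementary form, as your own parenthetical remark acknowledges.
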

\begin{proof}
It can be checked that $\{u_2,u_3\}$ is a barrier of $G-uu_1$.
    By Lemma \ref{forbidden}, $u_2u_3$ is forbidden in $G-uu_1$.
    So the result holds.
\end{proof}
We say an edge $e$ in a matching covered graph satisfies the {\em triangle condition} if one end of $e$ is of degree 3, and lie in a triangle that does not contain $e$. If $e$ satisfies the triangle condition, then $e$ is not removable by  Proposition \ref{pro:uvnonremovable}.

\begin{pro}\label{pro:H8_not_wheel-li}
    $\overline{C_6}$ and $R_8$ are not
  wheel-like.
\end{pro}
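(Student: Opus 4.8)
The plan is to show, for each of the two graphs $\overline{C_6}$ and $R_8$, that no vertex can serve as a hub, i.e.\ for every candidate vertex $h$ there is a removable class all of whose edges avoid $\partial(h)$. Both graphs are vertex-transitive (or at least symmetric enough), so by symmetry it suffices to fix one vertex $h$ and exhibit a removable class disjoint from $\partial(h)$. Recall that $\overline{C_6}$ is the complement of a $6$-cycle, which is isomorphic to $K_{3,3}$ together with a perfect matching across the parts, equivalently the triangular prism $K_3\times K_2$; it is $4$-regular on six vertices. The graph $R_8$ is the $8$-vertex brick drawn in Figure~\ref{fig:H8c}, and I would read off its adjacencies from that figure.

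First I would determine the removable classes of $\overline{C_6}$ explicitly. Since $\overline{C_6}$ is one of the three exceptional near-bipartite bricks, its removable edges come in doubletons; I would list these removable doubletons directly (there are three of them, corresponding to the three ``rungs'' in the prism picture / the three non-edges of the complementary $C_6$). The key observation is that because $\overline{C_6}$ is vertex-transitive and each vertex lies on only \emph{some} of these doubletons, a fixed hub $h$ meets at most a proper subset of the removable doubletons; I would then point to one removable doubleton none of whose two edges is incident with $h$. This single witness, together with vertex-transitivity, shows no choice of $h$ works, so $\overline{C_6}$ is not wheel-like.

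Next I would carry out the identical strategy for $R_8$. Here I would first enumerate (or cite the known list of) the removable classes of $R_8$, again using that $R_8$ is near-bipartite and appears in the exceptional list of Theorem~\ref{thm:near-bi_non-adj}. Using the automorphism group of $R_8$ (it has enough symmetry that its eight vertices fall into at most a couple of orbits), I would reduce to checking one representative vertex from each orbit and, for each representative $h$, display a removable class avoiding $\partial(h)$. Whether I invoke Proposition~\ref{pro:uvnonremovable}/the triangle condition to quickly certify that certain edges at $h$ are \emph{not} removable (thereby forcing the removable classes to lie elsewhere) or simply compute the matchings directly is a matter of bookkeeping.

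The main obstacle is purely the finite case analysis: correctly reading the adjacency/automorphism structure of $R_8$ from the figure and making sure the enumeration of removable doubletons and removable edges is complete, so that the claimed witness really is a removable class and really does avoid $\partial(h)$ for \emph{every} candidate hub. There is no deep idea here beyond ``wheel-like'' failing on two small graphs; the risk is only an arithmetic/combinatorial slip, which I would guard against by exploiting symmetry to minimize the number of cases and by double-checking each witness doubleton $\{e,f\}$ satisfies that $G-e-f$ is matching covered while $G-e$ and $G-f$ are not.
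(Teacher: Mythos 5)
Your overall strategy---list the removable classes of each graph and check that no single vertex is incident with an edge of every one of them---is exactly the paper's approach (its proof just displays these classes in Figure~\ref{fig:H8c}). But your execution has problems on both halves. For $\overline{C_6}$: this graph is the triangular prism (two triangles joined by three ``rungs''), which is $3$-regular; it is \emph{not} ``$K_{3,3}$ together with a perfect matching'' (that graph is $4$-regular), so your two descriptions contradict each other. Moreover the three removable doubletons are not ``the rungs'': each doubleton consists of the two \emph{parallel triangle edges} (one from each triangle), while the rungs lie in no removable class at all. This is not harmless bookkeeping, because your ``key observation'' that a fixed vertex $h$ misses some doubleton rests exactly on the cubic structure: each vertex has three edges, one of which is a rung (in no class), so it meets at most two of the three pairwise disjoint doubletons. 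Under your $4$-regular reading, a vertex could a priori meet three disjoint doubletons, and the observation would be unjustified. The $\overline{C_6}$ half is thus salvageable, but only after correcting the structure.

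The genuine gap is $R_8$: your proposal contains no proof for it, only a promise to enumerate classes after ``reading the figure'', and the structural assumptions you plan to lean on are false. $R_8$ is the cubic Halin graph obtained from the tree consisting of a path $b_1b_2b_3$ with two leaves at $b_1$, one leaf at $b_2$ and two leaves at $b_3$, together with the cycle through the five leaves; its automorphism group is generated by a single reflection, so its eight vertices fall into \emph{five} orbits, not ``a couple'', and it is nowhere near vertex-transitive. Your guiding heuristic that, being near-bipartite, its removable classes ``come in doubletons'' also fails here: $R_8$ has exactly two removable doubletons and exactly one removable edge (the edge from $b_2$ to its leaf---the bold edge in the paper's figure), a removable edge lying in no doubleton. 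The content of the paper's proof is precisely this list of classes; once one has it, the check is immediate (every vertex other than the two ends of the removable edge misses that class, and those two ends miss either doubleton, e.g.\ the doubleton consisting of a tree edge at $b_1$ and the cycle edge between the leaves of $b_3$). Since none of this appears in your write-up, the $R_8$ case remains unproved.
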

\begin{proof} As shown in  Figure \ref{fig:H8c}, $\{e_i,f_i\}$ is a removable doubleton  for $i=1,2,3$, and the bold edge
     is a removable edge. So the result follows.
 \end{proof}
\begin{figure}[!h]
    \centering
    \begin{minipage}[t]{0.48\textwidth}
    \centering
    \includegraphics[totalheight=2cm]{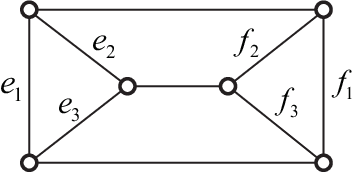}
    \end{minipage}
    \begin{minipage}[t]{0.48\textwidth}
    \centering
    \includegraphics[totalheight=1.9cm]{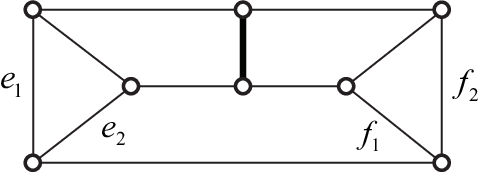}
    \end{minipage}
      \caption{$\overline{C_6}$ (left) and $R_8$ (right).}
      \label{fig:H8c}
\end{figure}

\begin{lem}\label{lem:ne-bi_k4}
    Let $G$ be a simple near-bipartite brick.
    Then $G$ is wheel-like if and only if $G$ is isomorphic to $K_4$.
\end{lem}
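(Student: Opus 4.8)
The plan is to prove the two implications separately. The reverse direction ($G\cong K_4\Rightarrow$ wheel-like) is a direct verification, while the forward direction rests entirely on Theorem~\ref{thm:near-bi_non-adj} together with Proposition~\ref{pro:H8_not_wheel-li}, so almost no new technical work is required once those are invoked.

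For the reverse direction I would take $G\cong K_4$ and exhibit a hub. First I would determine all removable classes of $K_4$. Deleting any single edge $ab$ leaves the opposite edge $cd$ forbidden (the two vertices $a,b$ must then be matched to $c,d$, so $cd$ lies in no perfect matching of $K_4-ab$); hence $K_4$ has no removable edge, and its only removable classes are the three removable doubletons $\{ab,cd\}$ arising from its three perfect matchings. Fixing an arbitrary vertex $h$ as the hub, each of these three doubletons contains exactly one edge incident with $h$, so every removable class of $K_4$ meets $\partial(h)$ and $K_4$ is wheel-like.

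For the forward direction, suppose $G$ is a simple near-bipartite brick that is wheel-like, and let $h$ be a hub. Since $\overline{C_6}$ and $R_8$ are not wheel-like by Proposition~\ref{pro:H8_not_wheel-li}, we have $G\notin\{\overline{C_6},R_8\}$. Assume for contradiction that $G\not\cong K_4$. Then $G$ is a simple near-bipartite brick distinct from $K_4$, $\overline{C_6}$ and $R_8$, so Theorem~\ref{thm:near-bi_non-adj} yields two nonadjacent removable edges $e$ and $f$. Each of $\{e\}$ and $\{f\}$ is a removable class of $G$, so the wheel-like hypothesis forces both $e$ and $f$ to have an edge incident with $h$, i.e.\ both $e$ and $f$ are incident with $h$. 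But then $e$ and $f$ share the vertex $h$, contradicting that they are nonadjacent. Therefore $G\cong K_4$.

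The main obstacle is conceptual rather than computational: the argument hinges on recognizing that ``two nonadjacent removable edges'' is exactly the configuration forbidden by the wheel-like property, since a single hub cannot be incident with two edges that have no common endpoint. Thus the role of Theorem~\ref{thm:near-bi_non-adj} is to guarantee that the only simple near-bipartite bricks escaping this obstruction are the three exceptional graphs, and Proposition~\ref{pro:H8_not_wheel-li} disposes of two of them; the remaining one, $K_4$, is precisely the wheel-like graph, as the reverse direction confirms.
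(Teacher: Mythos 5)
Your proof is correct and follows essentially the same route as the paper: both directions rest on Proposition~\ref{pro:H8_not_wheel-li} and Theorem~\ref{thm:near-bi_non-adj}, with the observation that two nonadjacent removable edges are incompatible with a single hub. Your write-up merely adds detail the paper leaves implicit, namely the explicit verification that the removable classes of $K_4$ are its three perfect matchings and the spelled-out contradiction with nonadjacency.
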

\begin{proof}
    If $|V(G)|=4$, then $G$ is isomorphic to $K_4$, which is wheel-like.
   Suppose that $|V(G)|\geq6$.    By  Proposition \ref{pro:H8_not_wheel-li}$, \overline{C_6}$ and $R_8$ are not wheel-like.
    If $G$ is distinct from $\overline{C_6}$ and  $R_8$, then $G$ has two nonadjacent removable edges by Theorem \ref{thm:near-bi_non-adj}, so it is not wheel-like.
\end{proof}

We will need the following theorem about planar graphs.

\begin{thm}[Kuratowski's Theorem]\label{thm:plaran_K3,3}
    Let $G$ be a graph. Then $G$ is nonplanar if and only if $G$ contains
    a subgraph that is a subdivision of either $K_{3,3}$ or $K_5$.
\end{thm}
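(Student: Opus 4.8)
The plan is to prove both implications, with the forward (necessity) direction being routine and the converse carrying all the weight. For necessity I would first record that $K_5$ and $K_{3,3}$ are themselves nonplanar by Euler's formula: a simple planar graph on $n$ vertices and $m$ edges satisfies $m \le 3n-6$, which fails for $K_5$ (it would demand $10 \le 9$), and a simple triangle-free planar graph satisfies $m \le 2n-4$, which fails for $K_{3,3}$ (it would demand $9 \le 8$). Since subdividing edges does not affect planarity, and since every subgraph of a planar graph is planar, a planar graph can contain no subdivision of $K_5$ or $K_{3,3}$. This settles one direction.

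For the converse I would argue the contrapositive through an extremal counterexample: let $G$ be an edge-minimal graph that is nonplanar yet contains no subdivision of $K_5$ or $K_{3,3}$. The first goal is to reduce to the $3$-connected case. A short argument shows $G$ is connected and $2$-connected, since a disconnected graph or one with a cutvertex can be embedded block-by-block using minimality. To rule out a $2$-cut $\{u,v\}$, I would split $G$ along $\{u,v\}$ into pieces $G_1,G_2$, adjoin a virtual edge $uv$ to each (used only for bookkeeping), apply minimality to embed $G_1+uv$ and $G_2+uv$ in the plane with $uv$ on the outer face, and paste the embeddings along $uv$. The delicate point here is to verify that a Kuratowski subdivision in either augmented piece lifts to one in $G$, which uses $2$-connectivity of the complementary piece to reroute the virtual edge $uv$ through an actual $u$--$v$ path.

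With $G$ now $3$-connected, I would induct on $|V(G)|$ using the theorem that every $3$-connected graph on at least five vertices has an edge $e=xy$ for which the contraction $G/e$ is again $3$-connected. Since $G/e$ has fewer vertices and (by a brief argument) still contains no Kuratowski subdivision, induction yields a plane embedding of $G/e$. By $3$-connectivity the neighbours of the contracted vertex $v_e$ lie on a cycle $C$ bounding a face of $G/e - v_e$, and I would examine how the neighbours of $x$ and those of $y$ interleave around $C$. If they split into two disjoint arcs, the edge $xy$ can be re-expanded inside the face to embed $G$; otherwise the interleaving pattern produces a subdivision of $K_{3,3}$, or, when three common neighbours of $x$ and $y$ occur, a subdivision of $K_5$, contradicting the choice of $G$.

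The main obstacle is exactly this last case analysis: converting each forbidden interleaving of $N(x)$ and $N(y)$ around the face boundary into an explicit Kuratowski subdivision. To keep the induction self-contained I would, following Thomassen, strengthen the inductive hypothesis to assert that every $3$-connected graph without a Kuratowski subdivision admits a \emph{convex} plane embedding in which no three vertices are collinear. The extra convexity is what guarantees the bounding cycle $C$ is genuinely available at each stage and what makes the re-expansion of $e$ go through without reproving a connectivity claim inside every inductive step.
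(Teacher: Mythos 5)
The paper does not prove this statement at all: it is quoted as Kuratowski's classical theorem and used purely as a black box (in Lemma \ref{lem:planarG/X} and Lemma \ref{nonplanar}), so there is no proof in the paper to compare yours against. Judged on its own, your outline is the standard Thomassen--Diestel proof and is sound in structure: Euler-formula nonplanarity of $K_5$ and $K_{3,3}$ for the easy direction; reduction of an edge-minimal counterexample to the $3$-connected case by splitting along a $2$-cut with a virtual edge; Thomassen's contractible-edge lemma plus induction for the $3$-connected case.

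Two points deserve more care than your sketch gives them. First, the step ``$G/e$ still contains no Kuratowski subdivision'' is not a purely formal lift: un-contracting $e$ can turn a subdivision of $K_5$ in $G/e$ into a subdivision of $K_{3,3}$ (not of $K_5$) in $G$ --- this happens when both ends of the contracted edge absorb three or more branch paths --- so the induction must treat the pair $\{K_5, K_{3,3}\}$ jointly, and your ``brief argument'' needs exactly this case split to be stated. Second, you describe two alternative mechanisms for the re-expansion step (the interleaving analysis of $N(x)$ and $N(y)$ around the face cycle $C$, and the convex-embedding strengthening of the inductive hypothesis); these are variants of the same device, and a written-up proof should commit to one rather than gesture at both. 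Neither point is a fatal error, but since the interleaving-to-Kuratowski case analysis and the convex-embedding induction are invoked rather than carried out, what you have is a correct plan for the classical proof rather than a complete proof.
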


\begin{lem}\label{lem:planarG/X}
    Let $G$ be a planar brick and $\partial(X)$ be a separating cut of $G$.
    Then $G/X$ and $G/\overline{X}$ are planar.
\end{lem}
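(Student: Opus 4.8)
The plan is to reduce the statement to the elementary fact that contracting a connected subgraph of a plane graph preserves planarity, so that the only genuine work is to verify that each shore of the cut induces a \emph{connected} subgraph. As a preliminary I would record that every matching covered graph is $2$-connected (this is standard, see \cite{LP86}): if a matching covered graph had a cut vertex $v$, then some component of $G-v$ would have odd order, every perfect matching would be forced to match $v$ into that component, and hence any edge joining $v$ to another component would lie in no perfect matching, contradicting the matching covered property. Thus a matching covered graph on at least three vertices has no cut vertex.

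Since $\partial(X)$ is separating, both contractions $G/X$ and $G/\overline{X}$ are matching covered, and therefore $2$-connected. Now observe that deleting the contraction vertex $x$ from $G/X$ leaves exactly the induced subgraph $G[\overline{X}]$ (the cut edges disappear with $x$, and the edges within $\overline{X}$ remain), while deleting $\overline{x}$ from $G/\overline{X}$ leaves exactly $G[X]$. Because deleting a single vertex from a $2$-connected graph yields a connected graph, both $G[X]$ and $G[\overline{X}]$ are connected. With this in hand, $G/X$ is obtained from $G$ by contracting the connected subgraph $G[X]$ to a single vertex: fixing a spanning tree of $G[X]$ and contracting its edges one at a time realizes $G/X$ (up to loops and parallel edges, which are irrelevant to planarity) from $G$ by a sequence of single-edge contractions. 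Each such contraction can be carried out inside a fixed plane embedding by sliding one endpoint of the edge onto the other, so planarity is preserved at every step; hence $G/X$ is planar, and the identical argument applied to the connected subgraph $G[\overline{X}]$ shows $G/\overline{X}$ is planar.

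The main obstacle is really the connectivity of the two shores: without it the ``contraction'' would merge vertices lying in distinct components, which need not preserve planarity, so isolating the $2$-connectivity of matching covered graphs is the crucial step, after which the contraction becomes a genuine minor operation and planarity transfers for free. I would note that one could instead attempt to argue directly from Kuratowski's Theorem (Theorem~\ref{thm:plaran_K3,3}) by showing a $K_{3,3}$- or $K_5$-subdivision in $G/X$ either avoids $x$ (and hence already lies inside $G[\overline{X}]\subseteq G$) or can be rerouted through $G[X]$; however, rerouting a degree-$4$ branch vertex of a $K_5$-subdivision through a tree need not yield a subdivision, so I prefer the contraction-preserves-planarity formulation, whose justification via a plane embedding is elementary and avoids this complication.
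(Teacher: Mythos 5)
Your proof is correct, and its crux coincides with the paper's: both arguments derive connectedness of the two shores from the fact that the $\partial(X)$-contractions are matching covered and hence $2$-connected (the paper cites this from \cite{LP86}, where you sketch the folklore argument via Tutte-type counting), so that deleting the contraction vertex from $G/\overline{X}$ (resp.\ $G/X$) leaves $G[X]$ (resp.\ $G[\overline{X}]$) connected. Where you diverge is the finishing step. The paper invokes Kuratowski's Theorem (Theorem~\ref{thm:plaran_K3,3}): since $G$ contains no subdivision of $K_{3,3}$ or $K_5$, it asserts that $G/X$ contains none either, and concludes planarity. You instead contract a spanning tree of the connected shore edge by edge inside a fixed plane embedding, i.e., you use that planarity is minor-closed and that contracting a connected subgraph is a genuine minor operation. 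Your route is somewhat more robust: the paper's lifting step has exactly the wrinkle you flag, namely that if the contraction vertex is a degree-$4$ branch vertex of a $K_5$-subdivision in $G/X$, then expanding it into a tree of $G[X]$ yields only a $K_5$ minor of $G$, not necessarily a subdivision, so the paper's terse assertion implicitly needs the additional standard fact that a $K_5$ minor already forces nonplanarity (or forces a $K_5$ or $K_{3,3}$ subdivision). Both proofs are sound in substance; yours makes the minor-theoretic content explicit at the cost of a few more lines, while the paper's is shorter but leaves that lifting argument to the reader.
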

\begin{proof}
    By Theorem \ref{thm:plaran_K3,3}, $G$ contains no
     subgraphs that is a subdivision of either $K_{3,3}$ or $K_5$. As $G/{\overline{X}}$ is matching covered, it is 2-connected (see Remark 2 on Page 145 of \cite{LP86}). Then $G[{X}]$ is connected.
    So $G/X$ contains no subgraphs that is a subdivision of either $K_{3,3}$ or $K_5$.
    Therefore,  $G/X$  is planar. So does  $G/\overline{X}$.
\end{proof}

\begin{pro}\label{lem:six_vertices_W5}
    Let $G$ be a simple planar brick with six vertices.
    Then $G$ is a wheel-like brick if and only if $G$ is $W_5$.
\end{pro}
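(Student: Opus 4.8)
The plan is to prove the two implications separately, the forward (``only if'') direction being the substantial one.

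For the easy direction I would check directly that $W_5$ is wheel-like. As recorded in the introduction, in an odd wheel other than $K_4$ every rim edge fails to be removable while every spoke is removable; concretely each rim edge satisfies the triangle condition at one of its endpoints, so it is non-removable by Proposition~\ref{pro:uvnonremovable}. Verifying in addition that $W_5$ has no removable doubleton, one concludes that the removable classes of $W_5$ are exactly its five spokes, each lying in $\partial(h)$ for $h$ the centre; hence $W_5$ is wheel-like.

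For the converse, let $G$ be a simple planar brick on six vertices that is wheel-like. By Proposition~\ref{thm:sim-six-w5}, $G$ is either $W_5$ or nonsolid, so it suffices to show that a nonsolid such $G$ is \emph{not} wheel-like. Assume $G$ is nonsolid. By Theorem~\ref{exist-robust} it has a robust cut $\partial(X)$, and Theorem~\ref{robust_H} supplies $X'\subseteq X\subseteq X''$ with $G/\overline{X'}$ and $G/X''$ bricks. Since a brick has at least four vertices, $G/\overline{X'}$ forces $|X'|\ge 3$ and $G/X''$ forces $|\overline{X''}|\ge 3$; combined with $|X'|\le|X|$, $|\overline{X''}|\le|\overline{X}|$ and $|X|+|\overline{X}|=6$ this pins down $|X|=|\overline{X}|=3$ and $X'=X''=X$. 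Thus $G/X$ and $G/\overline{X}$ are bricks on four vertices, so each has underlying simple graph $K_4$; in particular $G[X]$ and $G[\overline{X}]$ are triangles and $G$ is two disjoint triangles together with the cut edges $\partial(X)$. Writing $k=|\partial(X)|$, each vertex needs a cut edge to have degree at least three, so $k\ge 3$, while planarity gives $|E(G)|=6+k\le 3\cdot 6-6$, i.e.\ $k\le 6$.

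It then remains to exhibit, in each of these finitely many configurations, two removable classes whose spanned vertex sets are disjoint: since a hub must be incident with an edge of every removable class, no such $G$ can be wheel-like. My main tools are Lemma~\ref{lem:removable doubleton}, which lifts removable doubletons through a cut edge $ij$ (with unit cut-multiplicities at $i$ and $j$) to the removable doubleton $\{\,X\setminus\{i\},\,\overline{X}\setminus\{j\}\,\}$ of $G$, where $X\setminus\{i\}$ denotes the triangle edge avoiding $i$; Lemma~\ref{thm:re_also_re}, which certifies a cut edge as removable in $G$ once it is removable in both contractions; and Proposition~\ref{pro:uvnonremovable} to discard rim-type edges. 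For $k=3$ the graph is the prism $\overline{C_6}$, already handled by Proposition~\ref{pro:H8_not_wheel-li}, and at the other extreme planarity severely restricts the $k=6$ configuration (the octahedron $K_{2,2,2}$ being the generic case), where one checks directly that there are vertex-disjoint removable edges. The hard part will be the middle range $4\le k\le 5$: there $G/X$ and $G/\overline{X}$ are multigraph $K_4$'s, so the removable classes of $G$ are \emph{not} governed by those of the contractions --- as the example of two triangles joined by $\{14,15,24,36\}$ shows, a triangle edge such as $46$ can be removable in $G$ without being removable in either contraction. I would therefore organize the remaining cases by the cut-degree sequences $(n_1,n_2,n_3)$ on $X$ and $(m_4,m_5,m_6)$ on $\overline{X}$, use the lifted doubletons and the removable cut edges to locate one removable class based at each triangle, and confirm via the explicit list of perfect matchings that these two classes can always be chosen vertex-disjoint, contradicting the existence of a hub.
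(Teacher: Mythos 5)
Your overall strategy coincides with the paper's: reduce via Proposition~\ref{thm:sim-six-w5} to the nonsolid case, extract a separating cut splitting the six vertices $3$--$3$, conclude that $G$ consists of two disjoint triangles joined by $k$ cut edges with $3\le k\le 6$, and then rule out each of the finitely many resulting graphs by exhibiting removable classes away from any common vertex. One remark on the reduction: it is correct, but the detour through robust cuts (Theorems~\ref{exist-robust} and~\ref{robust_H}) is unnecessary. A nonsolid brick has, by definition, a separating cut that is not tight, and such a cut is automatically nontrivial because trivial cuts are tight; together with $3$-connectedness and matching-coveredness of the contractions this already forces the two-triangle structure, which is exactly how the paper proceeds.

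The genuine gap is that the decisive step is announced but never performed. After the reduction, one must actually exhibit, in each configuration with $4\le k\le 6$, two removable classes on disjoint vertex sets; this finite verification is precisely the content of the paper's proof, which lists the candidate graphs (Figure~\ref{fig:C6+e-1}) and marks a removable edge in each. Your proposal ends instead with a plan (``organize the remaining cases by the cut-degree sequences \dots\ and confirm via the explicit list of perfect matchings''), explicitly calling $4\le k\le 5$ ``the hard part'' without carrying it out, so as written this is an outline rather than a proof. It is worth noting that your pessimism about the lifting tools is also misplaced: they do suffice. In your own example (cut edges $14,15,24,36$), Lemma~\ref{lem:removable doubleton} applied to the cut edge $36$ lifts the removable doubletons of the two multigraph-$K_4$ contractions containing its images to the removable doubleton $\{12,45\}$ of $G$, so that graph is near-bipartite and hence not wheel-like by Lemma~\ref{lem:ne-bi_k4}; for the octahedron, every cut edge is parallel-class removable in both contractions and hence removable in $G$ by Lemma~\ref{thm:re_also_re}, giving nonadjacent removable edges; the remaining $k=5$ configurations are settled the same way by lifting doubletons and cut edges. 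But these certificates, case by case, are exactly what a proof of the proposition must contain, and they are absent.
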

\begin{proof}
    By Proposition \ref{thm:sim-six-w5}, $G$ is either nonsolid or  $W_5$. As $W_5$  is wheel-like, we may  assume that $G$ is a nonsolid brick.
    Then $G$ has a nontrivial separating cut $C$.
    Since $G$ is 3-connected and both $C$-contractions of $G$ are matching covered, both $C$-contractions of $G$ are isomorphic to $K_4$'s (up to multiple edges).
   It can be checked that $G$ is isomorphic to $\overline{C_6} $ or one of the graphs in Figure \ref{fig:C6+e-1}. By Proposition \ref{pro:H8_not_wheel-li},   $\overline{C_6}$ is  not  wheel-like.  It can be seen that all the graphs in Figure \ref{fig:C6+e-1} are not wheel-like  (the bold edges are removable).
    Therefore, the result holds.
\end{proof}
\begin{figure}[!h]
    \centering
    \includegraphics[totalheight=5.5cm]{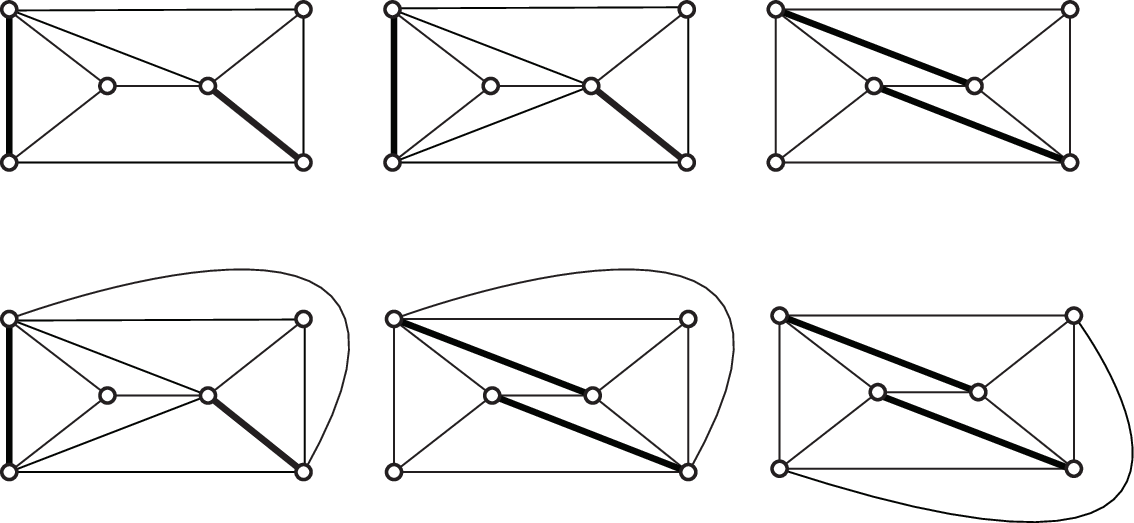}
    \caption{Planar bricks with six vertices, where the bold edges are removable.}
    \label{fig:C6+e-1}
\end{figure}

When we turn to the graphs with multiple edges, we have the following proposition.

\begin{pro}\label{lem:mutiedges}
    Let $G$ be a matching covered graph and $e\in E(G)$. And let $H$ be obtained from $G$ by adding an edge $f$ to two ends of $e$. Then $e$ and $f$ are removable in $H$.
    Therefore, if $G$ is not wheel-like, then $H$ is not wheel-like.
    \end{pro}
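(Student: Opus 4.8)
The plan is to prove Proposition \ref{lem:mutiedges}, which has two parts: first that both $e$ and $f$ are removable in $H$, and second the corollary that non-wheel-likeness is preserved under adding such a parallel edge.

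For the first part, the key observation is that $e$ and $f$ are parallel edges joining the same pair of vertices, say $u$ and $v$. To show $f$ is removable, I would verify that $H - f$ is matching covered. But $H - f$ is exactly $G$ (we added $f$ to $G$, so deleting it returns us to $G$), and $G$ is matching covered by hypothesis. Hence $f$ is removable. The case of $e$ is symmetric: since $e$ and $f$ have the same ends, they play interchangeable roles in $H$. Deleting $e$ yields a graph isomorphic to $G$ (with $f$ now playing the role $e$ played), so $H - e$ is matching covered as well. Thus both $e$ and $f$ are removable. It is worth noting that $\{e,f\}$ cannot be a removable doubleton here, precisely because each of $H-e$ and $H-f$ is already matching covered; the two parallel edges form two separate removable singletons, not a doubleton.

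For the second part, suppose $G$ is not wheel-like; I want to conclude $H$ is not wheel-like. The contrapositive is cleaner: assume $H$ is wheel-like with some hub $h$. Since $e$ and $f$ are both removable in $H$ (by the first part), and each is a removable class on its own, the defining property of a wheel-like brick forces each of $e$ and $f$ to be incident with $h$. Both $e$ and $f$ join $u$ and $v$, so this means $h \in \{u,v\}$. Now I would argue that $h$ serves as a hub for $G$ too. Every removable class $R$ of $G$ either remains a removable class in $H$ or corresponds to one: since $H$ differs from $G$ only by the extra parallel edge $f$, a perfect matching of $G$ extends to one of $H$ and conversely, so the removable classes of $G$ are precisely those of $H$ not equal to $\{e\}$ or $\{f\}$ (as sets of original edges). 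Each such class, being a removable class of the wheel-like graph $H$, has an edge in $\partial_H(h) = \partial_G(h) \cup \{f\}$; if that edge were $f$ it would contradict the class being distinct from $\{f\}$, so it lies in $\partial_G(h)$. Hence $h$ is a hub for $G$, contradicting that $G$ is not wheel-like.

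The main subtlety I expect is the correspondence between removable classes of $G$ and those of $H$ in the second part, rather than the removability computation, which is immediate. I would need to check carefully that adding a parallel edge does not create, destroy, or merge removable classes among the original edges of $G$ — in particular that an edge $g \neq e$ of $G$ is removable in $G$ if and only if it is removable in $H$, and similarly that removable doubletons are preserved. This should follow because $H - g$ and $G - g$ again differ only by the parallel edge $f$, so one is matching covered iff the other is; but stating this cleanly is where the care is needed. Once that correspondence is established, the hub transfers directly and the contrapositive closes.
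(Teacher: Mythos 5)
Your first part is correct and is exactly the paper's argument: $H-f=G$ and $H-e\cong G$, so both parallel edges are removable in $H$, and your remark that $\{e,f\}$ is not a doubleton is right. The second part, however, rests on an intermediate claim that is false, and it is precisely the point the paper's own proof is devoted to. You assert that ``the removable classes of $G$ are precisely those of $H$ not equal to $\{e\}$ or $\{f\}$'' and that ``removable doubletons are preserved.'' This fails whenever $G$ has a removable doubleton containing $e$, say $\{e,e'\}$ --- and this is not a fringe case but the main one in this paper, since the proposition is applied to graphs such as $K_4$ and $\overline{C_6}$, all of whose removable classes are doubletons. In $H$ the class $\{e,e'\}$ is destroyed: it cannot be a removable doubleton of $H$ because $H-e$ is already matching covered, and $e'$ is not removable in $H$ either. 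Indeed, any perfect matching of $H-e'$ that uses $f$ can exchange $f$ for $e$, yielding a perfect matching of $G-e'$; hence an edge forbidden in $G-e'$ stays forbidden in $H-e'$, so $H-e'$ is not matching covered (the paper argues this by noting every perfect matching of $H$ containing $e$ or $f$ contains $e'$). Consequently your step ``each such class, being a removable class of the wheel-like graph $H$, has an edge in $\partial_H(h)$'' breaks down exactly for these classes, and the verification you defer in your last paragraph would not go through as stated.

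The gap is easily repaired, and the repair shows your overall route (the contrapositive of the paper's direct statement) is sound. With $h\in\{u,v\}$ an end of $e$, split on whether the removable class $R$ of $G$ contains $e$. If $e\in R$, then $R$ meets $\partial_G(h)$ trivially, since $e$ itself is incident with $h$; no correspondence with classes of $H$ is needed. If $e\notin R$, the parallel-edge exchange argument above (applied after deleting the edges of $R$) shows $R$ is a removable class of $H$ as well, and your argument --- the edge of $R$ in $\partial_H(h)$ cannot be $f$, hence lies in $\partial_G(h)$ --- finishes. With this case split your proof matches the paper's in substance: the paper handles the doubleton-containing-$e$ case by showing the class collapses to the removable singleton $\{e\}$, which serves the same purpose.
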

\begin{proof}
   As $H-e\cong G\cong H-f$ and $G$ is matching covered, both $e$ and $f$ are removable in $H$. Obviously, an edge that is removable in
   $G$ is removable in $H$.
   If $\{e,e'\}$ is a removable doubleton of $G$, then every perfect matching of $H$ containing $e$ or $f$ contains $e'$. So $e$ and  $f$ are forbidden in $H-e'$, that is,   $e'$ is not removable in $H$. Therefore, if $G$ does not contain any vertex $v$ such that    each removable class of $G$ has an edge in $\partial(v)$, neither does $H$.
\end{proof}

\begin{lem}\label{pro:mutiedges-add}
    Let $G$ be a wheel-like brick and $u_h$ is its hub. Then all the multiple edges are incident with $u_h$.
\end{lem}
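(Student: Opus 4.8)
The plan is to take an arbitrary multiple edge of $G$ and show, using only the wheel-like hypothesis, that it must be incident with the hub $u_h$. Accordingly, I would let $e$ be an edge of $G$ possessing a parallel edge $f$, so that $e$ and $f$ share both ends, say $a$ and $b$, and aim to prove $u_h\in\{a,b\}$.

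The first and main step is to establish that $e$ is a \emph{removable} edge of $G$, i.e.\ that $G-e$ is matching covered. Since $G$ is a brick it is matching covered, so every edge $g\neq e$ lies in some perfect matching $M$ of $G$. If $e\notin M$, then $M$ is already a perfect matching of $G-e$ through $g$. If $e\in M$, then $f\notin M$ (as $e$ and $f$ share the end $a$), and because $e$ and $f$ have the same ends, $(M\setminus\{e\})\cup\{f\}$ is again a perfect matching of $G$; it avoids $e$ and still contains $g$, since $g\neq e$ and $g\neq f$ (the latter because $f\notin M$ while $g\in M$). Hence $G-e$ is matching covered and $e$ is removable; this is essentially the exchange already carried out in the proof of Proposition \ref{lem:mutiedges} (applied with $G-f$ playing the role of the matching covered graph and $G$ that of the graph obtained by doubling $e$), and by the symmetric argument $f$ is removable as well.

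Having shown that $\{e\}$ is a removable class of $G$, I would invoke the wheel-like hypothesis: every removable class of $G$ has an edge in $\partial(u_h)$. Since the class $\{e\}$ consists of the single edge $e$, this forces $e\in\partial(u_h)$, that is, $u_h$ is an end of $e$, so $u_h\in\{a,b\}$. As $e$ was an arbitrary edge belonging to a parallel pair, every multiple edge of $G$ is then incident with $u_h$, which is the assertion. I do not expect a serious obstacle here: the lemma is short, and the only point requiring genuine care is the perfect-matching exchange showing $e$ is removable—in particular verifying that the swapped matching still covers the chosen edge $g$—so I would lean on Proposition \ref{lem:mutiedges} rather than repeat the computation.
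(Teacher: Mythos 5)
Your proof is correct, but it takes a genuinely more direct route than the paper's. The paper argues through the underlying simple graph $H$ of $G$: it splits into the case where $H$ is near-bipartite (then $H\cong K_4$ by Lemma \ref{lem:ne-bi_k4}) and the case where it is not (then every removable class is a removable edge, and Theorem \ref{thm:re_in_brick} is invoked), using Proposition \ref{lem:mutiedges} to transfer conclusions between $G$ and $H$. You bypass all of this: your matching-exchange argument shows directly that any edge $e$ having a parallel partner is removable in $G$ (connectedness of $G-e$ is clear since bricks are $3$-connected), and then the wheel-like hypothesis, applied to the singleton removable class consisting of $e$ alone, immediately forces $e\in\partial(u_h)$. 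This is shorter, avoids the near-bipartite dichotomy and the auxiliary structural theorems entirely, and makes plain that the lemma needs nothing beyond the definitions. One caution: your parenthetical suggestion that one could instead just cite Proposition \ref{lem:mutiedges} with $G-f$ as the base graph is circular as literally stated, because that proposition requires the base graph to be matching covered, and ``$G-f$ is matching covered'' is exactly the removability of $f$ that is in question; moreover, the proposition's own proof is not an exchange argument but the observation that deleting one copy of a doubled edge returns the original matching covered graph. Since you wrote out the exchange in full, your proof is complete as it stands, but the citation alone would not have sufficed.
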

    \begin{proof} Assume that the underlying simple graph of $G$ is $H$. If $G=H$, the result holds obviously. So we assume that $G\neq H$. If $H$ is near-bipartite, then $H$ is $K_4$ by Lemma \ref{lem:ne-bi_k4}. By Proposition \ref{lem:mutiedges}, all the multiple edges of $H$ are incident with a common vertex.  If $H$ is not near-bipartite, then every removable class of $H$ is a removable edge. By Theorem \ref{thm:re_in_brick}, $u_h$ is incident with at least three  removable edges. By Proposition \ref{lem:mutiedges} again, every multiple edge is incident with $u_h$.
\end{proof}

\begin{lem}\label{lem:at_least_one_wheel-like}
    Assume that $G_1$ and $G_2$ are two disjoint bricks,   $u\in V(G_1)$ and $v\in V(G_2)$.
    Let $G=({G_1(u)\odot G_2(v)})_{\theta}$. If $G$ is a wheel-like  brick, then the following statements hold. \\
    {\rm 1)} At least one of $G_1$ and $G_2$ is wheel-like such that $u$ or $v$ is its hub.\\
    {\rm 2)} If $G_1$ is  wheel-like,  $u$  is its hub, and every edge of $\partial_{G_1}(u)$ lies in some removable class of $G_1$, then $G_2$ is also wheel-like.
\end{lem}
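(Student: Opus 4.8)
The plan is to analyze how removable classes of the spliced graph $G$ relate to those of its two $C$-contractions $G_1$ and $G_2$, where $C=\partial(V(G_1)\setminus\{u\})$ is the separating cut whose contractions recover $G_1$ and $G_2$. I would fix the wheel-like hub $h$ of $G$; by symmetry of the splicing construction, $h$ lies in $V(G_1)\setminus\{u\}$ or in $V(G_2)\setminus\{v\}$, so after relabeling I may assume $h\in V(G_2)\setminus\{v\}$ (the edges of $\partial_G(h)$ then all lie in the $G_2$-side of the splice). The strategy for part 1) is to show that $G_1$ must then be wheel-like with hub $u$: an edge or doubleton of $G_1$ that avoids the vertex $u$ entirely should, via Lemma \ref{thm:re_also_re} and Theorem \ref{thm:removable doubleton}, lift to a removable class of $G$ that also avoids $h$, contradicting wheel-likeness of $G$. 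Thus every removable class of $G_1$ meets $u$, which is exactly the statement that $u$ is a hub of $G_1$.

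More concretely for 1), I would argue by contraposition: suppose neither $G_1$ has hub $u$ nor $G_2$ has hub $v$. Then $G_1$ possesses a removable class $R_1$ with no edge in $\partial_{G_1}(u)$, and likewise $G_2$ possesses a removable class $R_2$ avoiding $\partial_{G_2}(v)$. Since $R_1$ lies entirely inside $G_1-u$, its edges survive in $G$ and are disjoint from $C$; I would invoke Lemma \ref{thm:re_also_re} (for a removable edge, checking it is also removable in the other contraction $G_2$, where it does not even appear) or Theorem \ref{thm:removable doubleton} (for a doubleton) to conclude $R_1$ yields a removable class of $G$ lying in the $G_1$-side, hence disjoint from $\partial_G(h)$ no matter whether $h$ sits in the $G_1$-part or the $G_2$-part. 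Symmetrically $R_2$ gives a removable class of $G$ in the $G_2$-side. At least one of these two is disjoint from $\partial_G(h)$, contradicting the assumption that $G$ is wheel-like. The main obstacle here is the bookkeeping for \emph{doubletons} that happen to use the single edge of $C$: one must verify the hypotheses of Theorem \ref{thm:removable doubleton}, namely that the $C$-edge of the doubleton is removable in the opposite contraction, which is where the condition ``avoids the hub'' is used to guarantee the lift.

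For part 2), I would now assume $G_1$ is wheel-like with hub $u$ and that \emph{every} edge of $\partial_{G_1}(u)$ lies in some removable class of $G_1$, and I must produce a hub for $G_2$. The natural candidate is $v$. Let $R$ be any removable class of $G_2$; I want to show $R$ meets $\partial_{G_2}(v)$. If not, $R$ lies in $G_2-v$ and, by the same lifting argument as above (Lemma \ref{thm:re_also_re} / Theorem \ref{thm:removable doubleton}), becomes a removable class of $G$ disjoint from $C$. Since $G$ is wheel-like with hub $h$ and all of $\partial_G(h)$ sits on the $G_2$-side, I would derive a contradiction by pairing this lifted class against the hypothesis that each edge of $\partial_{G_1}(u)=\partial_G(C)$-incidences at $u$ is itself covered by a removable class of $G_1$: this forces the hub $h$ of $G$ to lie among the $u$-adjacent vertices transferred from $G_1$, pinning down that $h$ corresponds under the splice to $v$'s neighborhood in $G_2$. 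In effect, the extra hypothesis on $\partial_{G_1}(u)$ is what rules out $h$ being an ordinary (non-$v$) vertex of $G_2$, so every removable class of $G$ — and hence of $G_2$ — is forced through $v$.

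The step I expect to be the genuine obstacle is reconciling the \emph{doubletons straddling} $C$: a removable doubleton of $G$ can have one edge in the $G_1$-side and the other in the $G_2$-side, or can be created from doubletons of $G_1$ and $G_2$ sharing the single $C$-edge in the manner of Lemma \ref{lem:removable doubleton}. Controlling these ``mixed'' removable classes — showing each of them, too, must touch $h$ and that this is compatible with $h$ being $v$ — requires carefully combining Lemma \ref{lem:removable doubleton} with Theorem \ref{thm:removable doubleton}, and is where the bulk of the case analysis will live. Once the lifting dictionary between removable classes of $G$ and those of the two contractions is established in all three cases (entirely in $G_1$, entirely in $G_2$, straddling $C$), both conclusions follow by the contrapositive counting argument sketched above.
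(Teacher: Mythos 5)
Your part 1) is essentially the paper's own argument: negating the conclusion gives a removable class $R_1$ of $G_1$ avoiding $\partial_{G_1}(u)$ and a removable class $R_2$ of $G_2$ avoiding $\partial_{G_2}(v)$; both lift (Lemma \ref{thm:re_also_re} for edges, Theorem \ref{thm:removable doubleton} for doubletons, both applicable since $R_i\cap C=\emptyset$) to two nonadjacent removable edges of $G$, one on each side of $C$, and no vertex of $G$ can meet both. Your parenthetical claim that the lift of $R_1$ avoids $\partial_G(h)$ ``no matter whether $h$ sits in the $G_1$-part or the $G_2$-part'' is not right for a single class, but your final counting (``at least one of these two is disjoint from $\partial_G(h)$'') repairs it, so part 1) stands.

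Part 2) has a genuine gap: you set out to prove that \emph{$v$ is a hub of $G_2$}, i.e.\ that every removable class of $G_2$ meets $\partial_{G_2}(v)$, and this statement is false under the lemma's hypotheses. Take $G_1$ an odd wheel spliced at its hub $u$, and $G_2$ an odd wheel with hub $v_h$, spliced at a rim vertex $v$ (with multiple edges in $E[v,v_h]$ so that degrees match), satisfying the conditions of Lemma \ref{Wi_Wj}. Then $G$ is a wheel-like brick, $G_1$ is wheel-like with hub $u$, and every edge of $\partial_{G_1}(u)$ lies in a removable class of $G_1$ --- all hypotheses of 2) hold --- yet the only hub of $G_2$ is $v_h\neq v$: an edge such as $v_hv_1$ is a removable class of $G_2$ disjoint from $\partial_{G_2}(v)$. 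So the contradiction you are aiming for cannot exist. The confusion shows also in your mechanism (``forces the hub $h$ of $G$ to lie among the $u$-adjacent vertices transferred from $G_1$,'' ``compatible with $h$ being $v$''): $v$ is deleted by the splice and is not a vertex of $G$ at all, and in the example above the hub of $G$ is $v_h$, a vertex of $G_2-v$. The correct target, as in the paper, is to negate wheel-likeness of $G_2$ globally: if \emph{no} vertex of $G_2$ is a hub, choose removable classes $F_1,F_2,F_3$ of $G_2$ with no common incident vertex, and lift each one to a removable class of $G$. Classes disjoint from $\partial_{G_2}(v)$ lift as in part 1); for a class meeting $\partial_{G_2}(v)$ in an edge $e_i$, the hypothesis that $\theta(e_i)\in\partial_{G_1}(u)$ lies in some removable class of $G_1$ is exactly what is needed: if $\theta(e_i)$ is removable in $G_1$, Theorem \ref{thm:removable doubleton} (or Lemma \ref{thm:re_also_re}) applies; if $\theta(e_i)$ lies in a removable doubleton $\{\theta(e_i),e_i'\}$ of $G_1$, then $e_i'$ is removable in $G$ by Theorem \ref{thm:removable doubleton}, while the case where $e_i$ also lies in a doubleton of $G_2$ is excluded by Lemma \ref{lem:removable doubleton} together with Lemma \ref{lem:ne-bi_k4}. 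One then checks the lifted classes still admit no common incident vertex in $G$ (any common vertex on the $G_2$-side would be common to $F_1,F_2,F_3$; one on the $G_1$-side would force all three $F_i$ to pass through $v$), contradicting wheel-likeness of $G$. None of this case analysis is present, or replaceable by what you propose.
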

\begin{proof} 1)
    By Theorem \ref{thm:re_in_brick}, $G_1$ and $G_2$ contain at least three removable classes, respectively.
    Suppose that   $R_1$ and $R_2$ are removable classes of $G_1$ and $G_2$, respectively, such that $R_1\cap \partial(u)=\emptyset$ and $R_2\cap \partial(v)=\emptyset$.
    Then $R_1\cup R_2$ is a matching in $G$.
    For $i\in \{1,2\}$, if $R_i$ is a removable edge, then $R_i$ is also a removable edge in $G$ by Lemma \ref{thm:re_also_re}, and if $R_i$ is a removable doubleton, then one edge of $R_i$ is also removable in $G$ by Theorem \ref{thm:removable doubleton}. It means that $R_1\cup R_2$ contains two nonadjacent removable edges of $G$,  contradicting the fact that $G$ is wheel-like.
    So at least one of $G_1$ and $G_2$ is wheel-like, such that $u$ or $v$ is its hub.

    2) Suppose, to the contrary, that  $G_2$ is not a wheel-like brick.
  Let $F_1$, $F_2$ and $F_3$ be removable classes of $G_2$ such that there does not exist any vertex in $G_2$ incident with one edge in $F_1$, $F_2$ and $F_3$, respectively. For $i=1,2,3$,
 if  $F_i\cap \partial(v)=\emptyset$, or  $F_i\cap \partial(v)=\{e_i\}$ and $\theta(e_i)$ is removable in $G_1$, then $F_i$ contains a  removable edge in $G$ by Lemma \ref{thm:re_also_re} and Theorem \ref{thm:removable doubleton};
 if  $F_i\cap \partial(v)=\{e_i\}$, and $\{\theta(e_i), e_i'\}$ is a removable doubleton in $G_1$, then by Theorem \ref{thm:removable doubleton}, the edge $e_i'$ is a removable edge in $G$. (If $e_i\in \partial(v)$,  $e_i$ lies in a removable doubleton of $G_2$, and  $\theta(e_i)$   lies in a removable doubleton in $G_1$, then $G$ has a removable doubleton by Lemma \ref{lem:removable doubleton}, and so the underlying simple graph of $G$ is $K_4$ by Lemma \ref{lem:ne-bi_k4}, contradicting the assumption that $G$ is  a splicing of two bricks.)  So,  in above both alternatives, for each $F_i$ ($i=1,2,3$), we have a removable class $F_i'$ of $G$ such that $F_i'\subset F_i$ or $F_i'\subset (E(G_1)\setminus \partial(u))$.
 Therefore, there does not exist any vertex  in $G$ incident with one edge in $F_1'$, $F_2'$ and $F_3'$, respectively, contradicting the fact that $G$ is wheel-like.
\end{proof}


\begin{lem}\label{Wi_Wj}
    Let $G$ and $H$ be two  odd wheels such that  $V(G)=\{u_h,u_1,u_2,\ldots, u_s\}$ and $V(H)=\{v_h,v_1,v_2,\ldots, v_t\}$, where $u_h$ and $v_h$ are the hubs of $G$ and  $H$, respectively. Assume that
     $u\in V(G)$, $v\in V(H)$,   $d_G(u)=d_H(v)$ and  $G(u)\odot H(v)$ is a brick.
    The graph $G(u)\odot H(v)$ is wheel-like if and only if the following statements hold.\\

    {\rm 1)}.  $|\{u,v\}\cap \{u_h,v_h\}|=1$. Without loss of generality, assume that $u=u_h$, that is $v \neq v_h$. Then $|V(G)|\ge 6$.

   {\rm 2)}. All the multiple edges of $G$ and $H$ are incident with $u_h$ and $v_h$, respectively.


    {\rm 3)}. Without loss of generality, assume that $v=v_t$  and $\{u_1v_1,u_rv_{t-1}\}\subset E(G(u)\odot H(v))$, where $1\leq r\leq s$. Then $r\neq 1$ and $u_1u_r\notin E(G)$.
\end{lem}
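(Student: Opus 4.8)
The plan is to prove both implications by determining the removable classes of the splice $S:=G(u)\odot H(v)$ and showing that they can be made to avoid every single vertex unless the three conditions hold. Throughout I keep in mind that when the conditions hold (with the normalisation $u=u_h$, $v=v_t\ne v_h$) exactly one hub is consumed by the splicing, so the only candidate hub of $S$ is $v_h$; the separating cut $C=\partial(V(G)\setminus\{u\})$, whose two contractions are $G$ and $H$, is the tool that transports removable classes between $S$, $G$ and $H$.

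For the forward implication, assume $S$ is wheel-like. By the first part of Lemma~\ref{lem:at_least_one_wheel-like} one of $G,H$, say $G$, is wheel-like with its splice vertex $u$ as a hub. Since in an odd wheel other than $K_4$ the centre is the only vertex meeting every removable class, I would first exclude $G\cong K_4$: there every edge lies in a removable doubleton, and pushing these doubletons across $C$ via Theorem~\ref{thm:removable doubleton} and Lemma~\ref{lem:removable doubleton} yields either two nonadjacent removable edges of $S$ or a removable doubleton of $S$; by Lemma~\ref{lem:ne-bi_k4} the latter forces the underlying simple graph of $S$ to be $K_4$, which is impossible since a splice of two bricks has at least six vertices. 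Hence $G\not\cong K_4$, so $u=u_h$ and $|V(G)|\ge 6$, and at least one of $u,v$ is a hub. Because every spoke of $G$ is removable (as $G$ is an odd wheel other than $K_4$), $\partial_G(u_h)$ consists of edges lying in removable classes, so the second part of Lemma~\ref{lem:at_least_one_wheel-like} gives that $H$ is wheel-like too. To see $v\ne v_h$ I argue by contradiction: if also $v=v_h$, every edge of $S$ produced by $\theta$ corresponds to a spoke of $G$ in one $C$-contraction and to a spoke of $H$ in the other, hence is removable in both, hence removable in $S$ by Lemma~\ref{thm:re_also_re}; these cross edges are incident with at least three distinct rim vertices of $G$ and with at least three distinct rim vertices of $H$, so no single vertex meets them all, contradicting wheel-likeness. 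This establishes statement~1.

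For statement~2, suppose (symmetrically for $H$) that $G$ has a multiple edge off $u_h$; as the underlying simple graph is a wheel, it is a doubled rim edge, each copy of which is removable in $G$ by Proposition~\ref{lem:mutiedges}. Having both ends in $V(G)\setminus\{u\}$, this edge appears in the $C$-contraction equal to $G$ and is absent from the other, so under the convention preceding Lemma~\ref{thm:re_also_re} it is removable in both contractions and hence in $S$; being incident only with rim vertices it misses $v_h$, contradicting wheel-likeness. For statement~3 and the converse I would record that, with $u=u_h$ and $v=v_t$, the graph $S$ consists of the rim of $G$, the path $v_1\cdots v_{t-1}$, the surviving spokes $v_hv_1,\dots,v_hv_{t-1}$, the redirected spokes attaching $v_h$ to some of the $u_i$, and the two cross edges $u_1v_1$ and $u_rv_{t-1}$; the only vertices not joined to $v_h$ are the junctions $u_1$ and $u_r$. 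For the converse I verify wheel-likeness with hub $v_h$ by showing every edge off $v_h$ is nonremovable: each rim edge $u_iu_{i+1}$ and each path edge $v_jv_{j+1}$ has an endpoint of degree $3$ whose rim-neighbour is again joined to $v_h$, hence lies in a triangle through $v_h$ avoiding the edge and is nonremovable by Proposition~\ref{pro:uvnonremovable}; the edges at $u_1$ or $u_r$ and the two cross edges are handled by exhibiting, for each, a barrier of the corresponding one-edge-deleted graph (for instance $\{v_h,v_1,u_3,u_5\}$ certifies that $u_1u_2$ is nonremovable), after which Lemma~\ref{forbidden} produces a forbidden edge. Conversely, if $r=1$ both rim edges of $v_t$ attach to $u_1$, and if $u_1u_r\in E(G)$ the two junctions are consecutive on the rim; in either degenerate case the triangle-and-barrier pattern around the junctions breaks down and one exhibits a rim or cross edge off $v_h$ that is removable, so $S$ is not wheel-like.

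The step I expect to be the main obstacle is exactly this junction analysis behind statement~3: one must show, uniformly over $s$, $t$ and over the distribution of the redirected spokes among the $u_i$, that \emph{every} edge incident with $u_1$ or $u_r$ fails to be removable precisely when $r\ne 1$ and $u_1u_r\notin E(G)$, and that some such edge becomes removable otherwise. The delicate point is that the two junctions create two gaps in the spoke pattern of $v_h$, and their non-adjacency is what keeps the gaps separated, so that for each junction edge one can build a barrier isolating the junction vertices together with an odd path of the $v$-rim and thereby forbid a deleted edge via Lemma~\ref{forbidden}. Once these barriers are in hand, the remaining claim that the spokes at $v_h$ really are removable should follow routinely from Lemma~\ref{thm:re_also_re} and Theorem~\ref{thm:removable doubleton} applied to the cut $C$.
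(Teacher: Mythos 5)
Your high-level skeleton matches the paper's (Lemma~\ref{lem:at_least_one_wheel-like} to locate a wheel-like factor, transporting removable classes across the splice cut $C$, then triangle conditions and barriers for sufficiency), but there are two genuine gaps, one of which is an actual error.

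The error is your exclusion of $G\cong K_4$. First, the dichotomy you claim --- pushing the doubletons of $K_4$ across $C$ ``yields either two nonadjacent removable edges of $S$ or a removable doubleton of $S$'' --- does not follow from the cited results. Theorem~\ref{thm:removable doubleton} only applies to a doubleton $R$ of $G$ when the edge of $R\cap C$ is removable in the other contraction $H$, and Lemma~\ref{lem:removable doubleton} only when that edge lies in a doubleton of $H$; if $v\ne v_h$, two of the three cross edges correspond to rim edges of $H$ at $v$, which are neither, so nothing can be pushed. And when $v=v_h$ with $H$ an odd wheel other than $K_4$, all three doubletons do push across, but the edges you obtain are exactly the triangle $u_1u_2u_3$ of $K_4$, which are pairwise \emph{adjacent}, so again no contradiction arises. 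Second, and more fundamentally, the conclusion is false: the splice of $K_4$ (with multiple edges at its splice vertex) with $W_t$, $t\ge5$, at the hub of $W_t$, with the two junction vertices nonadjacent, \emph{is} wheel-like, and in that splice $K_4$ is wheel-like with its splice vertex as hub, so part 1) of Lemma~\ref{lem:at_least_one_wheel-like} can legitimately hand you $G\cong K_4$. The resolution, which the paper carries out in Case~B ($s=3$, $t\ge5$) of its necessity argument, is that for $K_4$-type wheels the hub designation is flexible: a hub--hub splice involving $K_4$ must be \emph{relabelled} as a rim--hub splice (the vertex of $K_4$ carrying the multiple edges becomes the hub), after which Statements 1)--3) hold. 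Your argument has no mechanism for this relabelling and would ``refute'' wheel-likeness of graphs that are in fact wheel-like.

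The second gap is incompleteness where the paper does most of its work. For sufficiency you give one sample barrier and assert the rest; the paper needs a full case analysis on the position of the second junction ($r\in\{3,s-1\}$, $r\in\{4,s-2\}$, $5\le r\le s-3$), with separate barriers for the exceptional edges near $u_1,u_r$ and for $v_1v_2$ when $t=3$, precisely because the triangle condition fails there. Likewise, for the necessity of Statement 3) you assert that in the degenerate cases ($r=1$ or $u_1u_r\in E(G)$) ``one exhibits'' a removable edge off $v_h$; the paper proves this concretely: $r=1$ is impossible because $\{u_1,v_h\}$ would be a $2$-cut in a brick, and when $u_1u_r\in E(G)$ the edge $u_1u_r$ is shown removable by observing that $W-u_1u_r$ is an odd wheel with two subdivided rim edges, which together with a removable spoke at $v_h$ gives two nonadjacent removable edges. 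These verifications are the substance of the lemma, and you have flagged rather than supplied them. (A smaller issue of the same kind: your Statement 2) argument assumes the hub of $S$ must be $v_h$ before that has been established; the paper instead deduces Statement 2) directly from Lemma~\ref{pro:mutiedges-add} applied to $G$ and $H$, both already known to be wheel-like.)
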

\begin{proof}
     Assume that $u_iu_{i+1}\in E(G)$ for $i=1,2,\ldots,s$ (the subscript is modulo $s$), and $v_iv_{i+1}\in E(H)$ for $i=1,2,\ldots,t$ (the subscript is modulo $t$).
   Let $W=G(u)\odot H(v)$ and $C=\partial_W( V( G)\setminus \{u\})$.

    We will prove the sufficiency firstly. By Statement 2), every multiple edge is incident with the hub.  Then $|E[v,v_h]|=d_G(u)-2$. We will prove the edge in $E(W)\setminus \partial(v_h)$ is nonremovable.
   As $u_1u_r\notin E(W)$, that is $r\neq 2$ and $r\neq s$, we consider the following alternatives.

  \textbf{Case 1.} $r=3~ \mbox{or}~ s-1$.\\

      Assume that $u_r=u_3$ (the case when $u_r=u_{s-1}$ is the same by symmetry). Let $B_{u_1u_2}=\{v_h,u_3,v_{t-2}\}$.
      Then $W-B_{u_1u_2}-{u_1u_2}$ has exactly three components: $W[\{u_2\}],W[\{v_{t-1}\}]$ and $W[V(G)\setminus (\{u_2,v_{t-1}\}\cup B_{u_1u_2}) ]$,
    all of  which are odd. Therefore, $B_{u_1u_2}$ is a barrier of $W-{u_1u_2}$ containing both ends of the edge $v_{t-2}v_h$. By Lemma \ref{forbidden}, $u_1u_2$ is nonremovable in $W$.
     By symmetry, $u_2u_3$ is also nonremovable in $W$.

     Assume that $s=5$. It can be checked $\{v_h,u_3,v_{t-2}\}$  is a barrier of $W-u_4u_5 $ containing $v_{t-2}v_h$.
      So $u_4u_5$ is nonremovable in $W$.
      If  $t=3$,   $\{v_h,u_1,u_{s-1}\}$ is a barrier of $W-v_1v_2 $ containing $u_{s-1}v_h$.
       So $v_1v_2$ are nonremovable in $W$.
     Every edge   of $E(W)\setminus \partial(v_h)\setminus \{u_1u_2,u_2u_3,u_4u_5,v_1v_2\}$ satisfies the triangle condition, and then it is not removable by Proposition \ref{pro:uvnonremovable}.
     If $t\ge5$, then   every edge in $E(W)\setminus \partial(v_h)\setminus \{u_1u_2,u_2u_3,u_4u_5\}$ satisfies the triangle condition; so it is not removable by Proposition \ref{pro:uvnonremovable} again.

     Now we assume that $s\ge7$. If  $t=3$, it can be checked   $\{v_h,u_1,u_{s-1}\}$ is a barrier of $W-v_1v_2 $ containing $u_{s-1}v_h$.
     So $v_1v_2$ are nonremovable in $W$.
     Every edge   of $E(W)\setminus \partial(v_h)\setminus \{u_1u_2,u_2u_3,v_1v_2\}$ satisfies the triangle condition, and then it is not removable by Proposition \ref{pro:uvnonremovable}.
     If $t\ge5$, then   every edge of $E(W)\setminus \partial(v_h)\setminus \{u_1u_2,u_2u_3\}$ satisfies the triangle condition; so it is not removable by Proposition \ref{pro:uvnonremovable} once more.

    \textbf{Case 2.} $r=4~ \mbox{or}~ s-2$.\\

     Without loss of generality, assume that $u_r=u_4$ (the case when $u_r=u_{s-2}$ is the same by symmetry).
     Let $B_{u_2u_3}=\{v_h,u_4,v_{t-2}\}$.
     Then $W-B_{u_2u_3}-{u_2u_3}$ has exactly three components: $W[\{u_3\}],W[\{v_{t-1}\}]$ and $W[V(G)\setminus (\{u_3,v_{t-1}\}\cup B_{u_2u_3}) ]$, which are odd.
     Therefore, $B_{u_2u_3}$ is a barrier of $W-{u_2u_3}$ containing both ends of the edge $v_{t-2}v_h$. By Lemma \ref{forbidden}, $u_2u_3$ is nonremovable in $W$. If $t\ge5$, then   every edge of $E(W)\setminus \partial(v_h)\setminus \{u_2u_3\}$ satisfies the triangle condition; so it is not removable by Proposition \ref{pro:uvnonremovable}.
     Then we consider the case when $t=3$.
     It can be checked $\{v_h,u_1,u_{s-1}\}$ is a barrier of $W-v_1v_2 $ containing $u_{s-1}v_h$.
     So $v_1v_2$ is nonremovable in $W$.
     Note that every edge of $E(W)\setminus \partial(v_h)\setminus \{u_2u_3,v_1v_2\}$ satisfies the triangle condition, and then it is not removable by Proposition \ref{pro:uvnonremovable} again.

    \textbf{Case 3.} $5\leq r\leq s-3$.\\

     If $t=3$,
     it can be checked $\{v_h,u_1,u_{s-1}\}$ is a barrier of $W-v_1v_2 $ containing $u_{s-1}v_h$.
     So $v_1v_2$ is nonremovable in $W$.
     Note that every edge   of $E(W)\setminus \partial(v_h)\setminus \{v_1v_2\}$ satisfies the triangle condition, and then it is not removable by Proposition \ref{pro:uvnonremovable}.
     If $t\ge5$, then every edge of $E(W)\setminus \partial(v_h)$ satisfies the triangle condition; so it is not removable by Proposition \ref{pro:uvnonremovable} again.
     Therefore, $W$ is wheel-like.

    We now turn to the necessary.
    By Lemma \ref{lem:at_least_one_wheel-like}, at least one of $G$ and $H$ is wheel-like such that $u$ or $v$ is its hub.
    Without loss of generality, assume that $G$ is wheel-like and $u=u_h$ (in fact, $G$ is an odd wheel).
    Since every edge in $\partial_G(u_h)$ is a removable class in $G$, $H$ is wheel-like by Lemma \ref{lem:at_least_one_wheel-like}.
    By Lemma \ref{pro:mutiedges-add}, every multiple edge of $G$ or $H$  is incident with the hubs, that is, Statement 2)  holds.
    We will obtain a contradiction that $W$ is not wheel-like if it does not satisfy Statements 1)-3).

    \textbf{Case A.} $v=v_i$  for $i\in\{1,2,\ldots,t\}$.

    Without loss of generality, assume that $v_i=v_t$ and $u_1v_1\in E(W)$. Assume that $u_{r}v_{t-1}\in E(W)$.
     As $W$ is a brick, it is 3-connected. Then $u_r\neq u_1$. Otherwise, $\{u_1,v_h\}$ is 2-vertex cut in $W$.

    If $s=t=3$, then the underlying simple graph of $W$ is isomorphic to $\overline{C_6}$  or the first two  graphs in Figure \ref{fig:C6+e-1}.
    By Proposition \ref{pro:H8_not_wheel-li}, $\overline{C_6}$  is not wheel-like.
   Note that  the first two graphs in Figure \ref{fig:C6+e-1} are not wheel-like. So the underlying simple graph of $W$ is not wheel-like. Therefore, $W$ is not wheel-like by Proposition \ref{lem:mutiedges},  a contradiction.

    Next, we consider $s=3$ and $t\ge5$.
    Since $(\partial(v_h)\setminus E[v_t,v_h])\cap C=\emptyset$ and every edge in $\partial(v_h)$ is removable in $H$, every edge in $\partial(v_h)\setminus E[v_t,v_h]$ is removable in $W$ by Lemma \ref{thm:re_also_re}.
       Without loss of generality,
    assume that   $r=2$. 
    Then it can be checked that  $W-u_1u_2$ can be gotten from an odd wheel by inserting a vertex into two edges on the rim of the odd wheel, respectively, up to multiple edges (incident with the hub).
    So $W-u_1u_2$  is matching covered, and then $u_1u_2$ is a removable edge in $W$.
    Then $u_1u_2$ and an edge of $\partial(v_h)\setminus E[v_t,v_h]$ are two nonadjacent removable edges in $W$, a contradiction.

    Now  we consider $s\ge5$ and $t\ge 3$.
       Suppose that $r=2~\mbox{or} ~s$.
    Similar to last paragraph (when $s=3$ and $t\ge5$), we can show that $u_1u_r$ is removable.
    Therefore, $u_1u_r$ and an edge of $\partial(v_h)\setminus E[v_t,v_h]$ are two nonadjacent removable edges in $W$, a contradiction. Therefore, $2<r<s$, that is, Statements 1) and 3) hold.


    \textbf{Case B.} $v=v_h$.

    If $s\ge 5$ and $t\ge5$, then every edge in $\partial(u_h)$ and $\partial(v_h)$ is removable in $G$ and $H$, respectively,
    and hence the  edges in $C$
    are removable in $W$ by Lemma \ref{thm:re_also_re}.
    As $s\ge 5$ and $t\ge5$, there exist two nonadjacent edges in $C$, contradicting the fact that $W$ is wheel-like.


    Assume that $s=3$ and $t\ge5$ (the case when $t=3$ and $s\ge5$ is the same by exchanging the roles of $G$ and $H$).
    If exactly one vertex  in $\{u_1,u_2,u_3\}$, say $u_1$, satisfies $|E[u_h,u_1]|\ge2$,  then, similar to the case when $t=3$ and $s\ge5$ in Case A (by exchanging the roles of $G$ and $H$), we have $W$ satisfies Statements 1)-3).
    If  there exist multiple edges between $u_h$ and at least two different vertices in $\{u_1,u_2,u_3\}$,  then this case is similar to last paragraph (when $s\ge5$ and $t\ge5$ in Case B).
    So there exist two nonadjacent removable edges  in $W$, a contradiction.

    Last, we consider the case when $s=t=3$.
    Then the underlying simple graph of $W$ is isomorphic to $\overline{C_6}$,  or one of  graphs in Figures \ref{fig:C6+e-1} and  \ref{fig:case-b-1}.
    Recall that  $\overline{C_6}$  is not wheel-like.
    It can be checked  that all the  graphs in Figures \ref{fig:C6+e-1} and \ref{fig:case-b-1} are not wheel-like. So the underlying simple graph of $W$ is not wheel-like. Therefore, $W$ is not wheel-like by Proposition \ref{lem:mutiedges},
    a contradiction.
\end{proof}
 \begin{figure}[!h]
    \centering
    \includegraphics[totalheight=5.4cm]{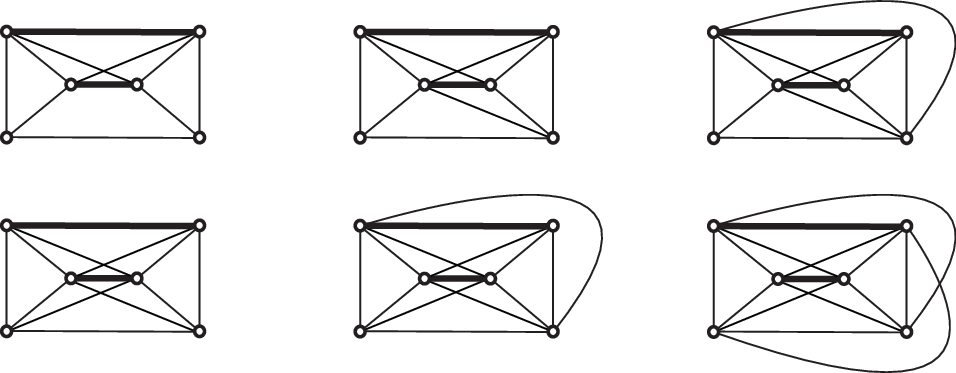}
    \caption{Illustration for the proof of Lemma \ref{Wi_Wj}, where  the bold edges are removable.}
    \label{fig:case-b-1}
\end{figure}
\begin{lem}\label{nonplanar}
     Assume that  $W$  is the wheel-like brick which is the splicing   of two odd wheels.
     Then $W$ is nonplanar.
\end{lem}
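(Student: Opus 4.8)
The plan is to extract the explicit structure of $W$ from Lemma \ref{Wi_Wj} and then exhibit a subdivision of $K_{3,3}$, so that nonplanarity follows at once from Kuratowski's Theorem (Theorem \ref{thm:plaran_K3,3}). Since $W$ is a wheel-like brick obtained by splicing two odd wheels, Lemma \ref{Wi_Wj} applies, and after relabeling we may take $u=u_h$ to be the hub of $G=W_s$ and $v=v_t$ to be a rim vertex of $H=W_t$, with $u_1v_1,\,u_rv_{t-1}\in E(W)$. Condition 3 gives $r\neq 1$ and $u_1u_r\notin E(G)$, and since $u_1u_2$ and $u_1u_s$ are rim edges of $G$ this forces $3\le r\le s-1$.

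First I would pin down the adjacencies of $v_h$ in $W$. Deleting $u_h$ from $G$ leaves the rim cycle $C=u_1u_2\cdots u_s u_1$, while deleting $v_t$ from $H$ leaves $v_h$ joined to the path $v_1v_2\cdots v_{t-1}$. The cut $\partial_H(v_t)$ consists of the two rim edges $v_tv_1,\,v_tv_{t-1}$, which are simple since every multiple edge of $H$ is incident with $v_h$ by condition 2, together with the edges of $E_H[v_t,v_h]$. Because $u_1v_1,u_rv_{t-1}\in E(W)$, the bijection $\theta$ matches $v_tv_1,v_tv_{t-1}$ with $u_hu_1,u_hu_r$, so every remaining edge at $u_h$ is matched with an edge of $E_H[v_t,v_h]$. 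Hence in $W$ the vertex $v_h$ is adjacent to $u_j$ for every $j\in\{1,\dots,s\}\setminus\{1,r\}$, and to every $v_i$. The multiple edges permitted at the hubs can only add further edges $v_hu_1$ or $v_hu_r$, which will not be used.

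Then I would display the $K_{3,3}$-subdivision. In $W$ the vertices $u_1$ and $u_r$ are joined by three internally disjoint paths, namely the two arcs $u_1u_2\cdots u_r$ and $u_ru_{r+1}\cdots u_s u_1$ of $C$ together with the fan path $u_1v_1v_2\cdots v_{t-1}u_r$, forming a subdivided theta graph; the interior vertices $u_2$, $u_s$ and $v_1$ exist precisely because $3\le r\le s-1$ and $t\ge 3$. Since $v_h$ is adjacent to each of $u_2$, $u_s$, $v_1$, taking branch vertices $\{u_1,u_r,v_h\}$ and $\{u_2,u_s,v_1\}$ yields a subdivision of $K_{3,3}$: the edges $v_hu_2$, $v_hu_s$, $v_hv_1$ and $u_1u_2$, $u_1u_s$, $u_1v_1$ are six of the nine branches, and $u_ru_{r-1}\cdots u_2$, $u_ru_{r+1}\cdots u_s$, $u_rv_{t-1}\cdots v_1$ are the other three. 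These paths are pairwise internally disjoint, because the three theta-paths meet only at $u_1,u_r$ and the $u$'s, the $v$'s and $v_h$ lie in disjoint vertex sets.

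The main obstacle is the first step: correctly reading off from the splicing operation that $v_h$ is adjacent in $W$ to every rim vertex of $G$ except $u_1$ and $u_r$, and checking that this conclusion survives the multiple edges allowed at the two hubs. Once that adjacency pattern is established, the theta-plus-apex configuration is visibly a subdivision of $K_{3,3}$, and $W$ is therefore nonplanar by Theorem \ref{thm:plaran_K3,3}.
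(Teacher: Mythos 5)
Your proof is correct and follows essentially the same route as the paper: invoke Lemma \ref{Wi_Wj} to pin down the structure of $W$ (with $3\le r\le s-1$), then exhibit a subdivision of $K_{3,3}$ with $\{u_1,u_r,v_h\}$ as one color class and conclude by Theorem \ref{thm:plaran_K3,3}. The only difference is that you make the paper's ``suitable paths'' explicit by naming the second color class $\{u_2,u_s,v_1\}$ and verifying the adjacencies of $v_h$ arising from the splicing bijection, which the paper leaves implicit.
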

\begin{proof}
      By Lemma \ref{Wi_Wj}, $W$ satisfies Statements 1)-3) of Lemma \ref{Wi_Wj}. Using the notations in the proof of Lemma \ref{Wi_Wj},
     $W-\{u_1,u_r,v_h\}$ has three components $C_1=W[\{u_2,\ldots, u_{r-1}\}]$, $C_2=W[\{u_{r+1},\ldots, u_s\}]$ and $C_3=W[\{v_1,\ldots, v_{t-1}\}]$.
   Then,  $C_i$ ($i\in\{1,2,3\}$), $u_1$,$u_r$,$v_h$  and suitable paths between them form a subdivision of $K_{3,3}$ (the vertex set $\{u_1,u_r,v_h\}$ is one of its color class).
    By Theorem \ref{thm:plaran_K3,3}, $W$ is nonplanar.
\end{proof}

\begin{lem}\label{bipar-H-nonre}
    Assume that $\partial(X)$ and $\partial(Y)$ are two  cuts of a brick $G$ such that  $G/{X}$ and $G/{Y}$ are bricks, and   $G/(\overline{X}\rightarrow\overline{x})/(\overline{Y}\rightarrow\overline{y})$ is a matching covered bipartite graph $H$.
    Then every edge incident with $\overline{x}$ is removable in $H$.
\end{lem}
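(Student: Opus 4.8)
The plan is to argue by contradiction: from a hypothetical non-removable edge at $\overline{x}$ I would extract a bipartite tight-cut obstruction in $H$ and then lift it to a nontrivial tight cut of the brick $G$, which is impossible.

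First I would fix notation and record three structural facts. Since the iterated contraction $G/(\overline{X}\to\overline{x})/(\overline{Y}\to\overline{y})$ is defined, the shores $\overline{X}$ and $\overline{Y}$ are disjoint; writing $M=X\cap Y$ we get $V(G)=\overline{X}\cup M\cup\overline{Y}$ (a disjoint union) and $V(H)=M\cup\{\overline{x},\overline{y}\}$, with $\overline{x}$ and $\overline{y}$ in the two color classes $A\ni\overline{x}$ and $B\ni\overline{y}$ of $H$. (i) The edges of $H$ incident with $\overline{x}$ are precisely the images of the cut $\partial_G(X)=\partial_G(\overline{X})$. (ii) Realizing $G/X$ and $G/Y$ as splicings and invoking Proposition \ref{thm:MC_IS_MC}, the cuts $\partial_G(X)$ and $\partial_G(Y)$ are separating in $G$; being nontrivial cuts of the brick $G$, they are \emph{not} tight. (iii) Since $G/X$ and $G/Y$ are bricks they are bicritical, and since bricks have even order, $|\overline{X}|$ and $|\overline{Y}|$ are odd; hence $G[\overline{X}]=(G/X)-x$ and $G[\overline{Y}]=(G/Y)-y$ are factor-critical.

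Next, suppose some edge $e=\overline{x}w$ of $H$ (with $w\in B$) is not removable in $H$. I would apply Lemma \ref{lem:nonre-bi} to obtain nonempty proper subsets $A_1\subseteq A$ and $B_1\subseteq B$ with $\overline{x}\in A_1$, $w\in B\setminus B_1$, $H[A_1\cup B_1]$ matching covered (so $|A_1|=|B_1|$), and $E_H[A_1,B\setminus B_1]=\{e\}$; in particular $N_H(A_1)\subseteq B_1\cup\{w\}$ and $\overline{x}$ is the only vertex of $A_1$ adjacent to $w$. Setting $S=A_1\cup B_1\cup\{w\}$, a short count shows $\partial_H(S)$ is a tight cut of $H$ with odd shore: every perfect matching saturates $A_1$ inside $B_1\cup\{w\}$ and leaves exactly one vertex of $B_1\cup\{w\}$ to be matched to $A\setminus A_1$, i.e.\ outside $S$. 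Thus $\partial_H(S)$ is a nontrivial tight cut whose shore contains the pole $\overline{x}$, in which the excess neighbour $w$ is joined to $S\cap A$ only through $\overline{x}$.

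Finally I would lift this tight cut to $G$: expand the poles lying in $S$, replacing $\overline{x}$ by $\overline{X}$ and (if $\overline{y}\in S$) $\overline{y}$ by $\overline{Y}$, to obtain $\widehat{S}\subseteq V(G)$ with $\overline{X}\subseteq\widehat{S}$; the goal is to show $\partial_G(\widehat{S})$ is a nontrivial tight cut of $G$, contradicting that $G$ is a brick, after disposing of the degenerate case in which the complementary shore is a single vertex (handled by $3$-connectivity or a direct check). The hard part is precisely this lifting step. By fact (ii) the contractions are along separating but \emph{non-tight} cuts, so a tight cut of the doubly contracted $H$ need not lift to a tight cut of $G$: a perfect matching of $G$ that uses three or more edges of $\partial_G(\overline{X})$ (or of $\partial_G(\overline{Y})$) does not project to a perfect matching of $H$, and one must still force every such matching to meet $\partial_G(\widehat{S})$ in exactly one edge. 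This is where fact (iii) is indispensable — factor-criticality of $G[\overline{X}]$ and $G[\overline{Y}]$ allows one to reroute the matching inside the expanded poles — together with the structural point that $w$ attaches to the $A$-side only through $\overline{x}$, which pins the unique crossing edge. Establishing this crossing-count for all perfect matchings of $G$ is the core of the argument.
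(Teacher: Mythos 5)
Your setup and overall strategy are sound and in fact parallel the paper's own proof: the disjointness of $\overline{X}$ and $\overline{Y}$, the oddness and connectedness of $G[\overline{X}]$ and $G[\overline{Y}]$, the application of Lemma \ref{lem:nonre-bi} to a hypothetical non-removable edge $\overline{x}w$, and the observation that $S=A_1\cup B_1\cup\{w\}$ carries a tight cut of $H$ pinned by the unique edge $\overline{x}w$ are all correct (the paper's $Z\cup\{b\}$ is exactly your $S$). But the proposal stops at what you yourself call the core: you never establish that $\partial_G(\widehat{S})$ is tight in $G$. The one sentence offered in its place --- that factor-criticality of $G[\overline{X}]$ and $G[\overline{Y}]$ ``allows one to reroute the matching inside the expanded poles'' --- is not an argument, and it points in the wrong direction: since every edge of $\partial_G(\overline{X})$ has both ends in $\widehat{S}$ (all $H$-neighbours of $\overline{x}$ lie in $B_1\cup\{w\}$), how a perfect matching behaves on $\overline{X}$ and its boundary is essentially orthogonal to how many edges of $\partial_G(\widehat{S})$ it uses, so rerouting there cannot pin the crossing count. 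This is a genuine gap, and it is precisely the step where all the work lies.

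What closes the gap --- and what the paper actually does --- is to abandon lifting tightness and instead exhibit a barrier of $G$. The set $T=A\setminus A_1$ consists of genuine vertices of $G$ (as $\overline{x}\in A_1$ and $\overline{y}\in B$), and $E_H[A_1,B\setminus B_1]=\{\overline{x}w\}$ forces every vertex of $(B\setminus B_1)\setminus\{w\}$ to have all its neighbours in $T$. Hence the components of $G-T$ are: the singletons from $(B\setminus B_1)\setminus\{w,\overline{y}\}$, the connected subgraph induced by $\widehat{S}$, and $G[\overline{Y}]$ when $\overline{y}\notin S$; each is odd because $|\overline{X}|$ and $|\overline{Y}|$ are odd, and each is connected by the $3$-connectivity of the bricks $G/X$ and $G/Y$. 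Counting gives $o(G-T)=|T|$, so $T$ is a barrier by Tutte's theorem, and the standard barrier--tight-cut relation then makes the cut around the odd component containing $\overline{X}$ (namely $\partial_G(\widehat{S})$) tight in $G$ --- pure parity and counting, with no factor-criticality and no rerouting; in particular the matchings using three or more edges of $\partial_G(\overline{X})$, which you rightly note exist, are handled automatically. Finally, your degenerate case ($|V(G)\setminus\widehat{S}|=1$) is indeed a $3$-connectivity check, but it too is left unexecuted, and the sub-case $w=\overline{y}$ needs the expanded pole $\overline{Y}$ rather than a single vertex to produce the forbidden $2$-cut --- this is where the paper spends several extra lines. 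As written, your proposal is a correct roadmap whose decisive claim is asserted rather than proved.
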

\begin{proof}
    Let $A$ and $B$ be the two color classes of $H$. Without loss of generality, assume that $\overline{x}\in A$. Suppose, to the contrary, that  $\overline{x}b$ is nonremovable in $H$ ($b$ and $\overline{y}$ may be the same vertex). Then by Lemma \ref{lem:nonre-bi} there exists an edge cut $\partial(Z)$ in $H$, such that $\overline{x}\in Z\cap A$, $b\in\overline{Z}\cap B$, $|Z\cap A|=|Z\cap B|$ and $\{\overline{x}b\}=E[Z\cap A,\overline{Z}\cap B]$.

    As $\{\overline{x}b\}=E[Z\cap A,\overline{Z}\cap B]$, $N((\overline{Z}\cap B)\setminus\{b\})\subseteq\overline{Z}\cap A$.
    Let $S=\overline{Z}\cap A$ and $Q=Z\cup\{b\}$.
    As $G/{X}$ and $G/{Y}$ are bricks, $G/{X}$ and $G/{Y}$ are 3-connected. Therefore, both $G[\overline{X}]$ and $G[\overline{Y}]$ are connected and have odd numbers of  vertices, respectively.
    If $\overline{y}\notin Q$, then every vertex of $(\overline{Z}\cap B)\setminus\{b,\overline{y}\}$, $G[Q]$ and $G[\overline{Y}]$ are the components of $G-S$.
    If $\overline{y}\in Q$, then $|(Q\cup \overline{Y})\setminus \{\overline{y}\}|$ is also odd in $G$, and every vertex of $(\overline{Z}\cap B)\setminus\{b\}$ and $Q$ are the components of $G-S$. We have $o(G-S)=|S|$ whether $\overline{y}$ is in $Q$ or not, as $|\overline{Z}\cap A|=|\overline{Z}\cap B|$.
    So $S$ is a barrier of $G$. Then $ \partial (Q)$ is a tight cut by a simple
computation.
    Noting $G$ is a brick,  $G$ is free of nontrivial tight cuts.
    As $\{b, \overline{x}\}\subset Q$, we have $|\overline{Q}|=1$, that is, $|\overline{Z}\cap A|=1$. Since $|\overline{Z}\cap A|=|\overline{Z}\cap B|$, we have $|\overline{Z}\cap B|=1$, that is, $\{b\}=\overline{Z}\cap B$.

    Assume that $\overline{y}\neq b$. Note that $\overline{x}b$ is nonremovable in $H$.
    Recalling $|\overline{Z}\cap A|=1$, we have $|N_G(b)|=2$,   contradicting the assumption that $G$ is a brick.
    Now we consider the case when $\overline{y}=b$. It can be checked that $Z\cap B$ is also a barrier of $G$ by symmetry. So $|Z\cap B|=1$. Let $\{b'\}=Z\cap B$. Similar to the case when $\overline{y}\neq b$, we have $|N_H(b')|=2$,
     and so  $|V(H)|=4$.  In $G$, assume that $e$ is the corresponding edge  of the edge $\overline{x}\,\overline{y}$. Then the end of $e$ in  $\overline{X}$ and $b'$ is a 2-vertex cut of $G$, contracting the fact that $G$ is a brick.
      \end{proof}

\begin{lem}\label{lem:V(H)_re_edge}
    Assume that $\partial(X)$ and $\partial(Y)$ are two cuts of a brick $G$ such that  $G/{X}$ and $G/{Y}$ are bricks, and   $(G/(\overline{X}\rightarrow\overline{x}))/(\overline{Y}\rightarrow\overline{y})$ is a matching covered bipartite  graph $H$.
    If $G/(X\rightarrow x)$ is an odd wheel such that $x$ is its hub and $|N_H(\overline{x})|\ge 2$, then there exists a removable edge $e$ of $G$ such that both  ends of $e$ belong to $\overline{X}\cup N(\overline{X})\setminus\overline{Y}$.
\end{lem}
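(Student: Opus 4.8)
The plan is to locate, inside the bipartite graph $H$, a removable edge incident with $\overline{x}$ that avoids $\overline{y}$, and then to transport its removability back to $G$. First I would record the structural picture: since $G/X$ is an odd wheel with hub $x$, the set $\overline{X}$ is precisely its rim and the edges of $\partial(X)$ are its spokes; moreover $\overline{X}\subseteq Y$ and $\overline{Y}\subseteq X$, so $\overline{X}$ and $\overline{Y}$ are disjoint and the non-$\overline{y}$ vertices of $H$ are exactly those of $X\setminus\overline{Y}$. By Lemma \ref{bipar-H-nonre} every edge of $H$ incident with $\overline{x}$ is removable in $H$, and because $|N_H(\overline{x})|\ge 2$ there is a neighbour $c\neq\overline{y}$, hence $c\in X\setminus\overline{Y}$. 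Choosing an edge $e=wc$ of $G$ underlying $\overline{x}c$, with $w\in\overline{X}$, gives an edge with $w\in\overline{X}$ and $c\in N(\overline{X})\setminus\overline{Y}$; thus both ends of $e$ lie in $\overline{X}\cup N(\overline{X})\setminus\overline{Y}$, as required. It remains only to prove that this $e$ is removable in $G$.

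The next step is to show $e$ is removable in $G/\overline{Y}$. In $G/\overline{Y}$ consider the cut $\partial(\overline{X})$. Its two contractions are $(G/\overline{Y})/\overline{X}=H$ and $(G/\overline{Y})/X=G/X$ (using $\overline{Y}\subseteq X$), both of which are matching covered, so $\partial(\overline{X})$ is a separating cut of $G/\overline{Y}$ and $e\in\partial(\overline{X})$ survives in both contractions. In $H$ the image of $e$ is $\overline{x}c$, removable by the first paragraph; in $G/X$ the image of $e$ is the spoke $wx$, which is removable whenever the odd wheel is distinct from $K_4$. By Lemma \ref{thm:re_also_re}, $e$ is removable in $G/\overline{Y}$.

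Finally I would lift this to $G$ by viewing $G-e$ along the cut $\partial(Y)$. Since both ends of $e$ lie in $Y$, the edge $e$ is internal to $Y$, so $(G-e)/Y=G/Y$ is a brick (hence matching covered), while $(G-e)/\overline{Y}=(G/\overline{Y})-e$ is matching covered by the previous step. Thus both $\partial(Y)$-contractions of $G-e$ are matching covered, and $G-e$ is the splicing of these two graphs, whence $G-e$ is matching covered by Proposition \ref{thm:MC_IS_MC}; that is, $e$ is removable in $G$. I expect this lifting step to be the crux: because the target edge must avoid $\overline{Y}$ it is forced to be internal to $Y$ and therefore invisible in $G/Y$, so the usual ``removable in both contractions'' criterion cannot be applied to $e$ directly, and it is the splicing reformulation of $G-e$ that circumvents this. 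The remaining gap is the degenerate case $G/X\cong K_4$, where spokes lie only in removable doubletons rather than being removable; there I would replace Lemma \ref{thm:re_also_re} by Theorem \ref{thm:removable doubleton}, applied to the doubleton $\{e,w'w''\}$ of $K_4$ (with $w',w''$ the other two rim vertices), to produce a removable rim edge $w'w''$ of $G/\overline{Y}$, both of whose ends lie in $\overline{X}$, and then lift it to $G$ by the same splicing argument; one should also check that the cut $\partial(\overline{X})$ of $G/\overline{Y}$ is non-tight, as that theorem requires.
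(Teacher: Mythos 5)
Your proposal is essentially the paper's own proof: the same appeal to Lemma \ref{bipar-H-nonre} to get removability of $\overline{x}c$ in $H$, the same dichotomy according to whether the spoke of $G/X$ under $\overline{x}c$ is removable (wheel distinct from $K_4$) or lies in a removable doubleton ($G/X\cong K_4$), the same use of Lemma \ref{thm:re_also_re} in the first case and of Theorem \ref{thm:removable doubleton} in the second, and the same lifting from $G/\overline{Y}$ to $G$ --- your explicit splicing argument via Proposition \ref{thm:MC_IS_MC} is exactly the content of the paper's second application of Lemma \ref{thm:re_also_re} under its convention that an edge absent from a contraction counts as removable there. Your setup, your main case, and your lifting step are all correct.

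The genuine gap is precisely the check you postponed in the $K_4$ case, and it cannot be closed, because the hypothesis in question is false: the cut $\partial(\overline{X})$ is \emph{always tight} in $G/\overline{Y}$. Indeed, let $A\ni\overline{x}$ and $B$ be the colour classes of $H$; every edge of $\partial(\overline{X})$ has its end outside $\overline{X}$ in $N_H(\overline{x})\subseteq B$, so if a perfect matching of $G/\overline{Y}$ contained $k$ such edges, its restriction to $H-\overline{x}$ would perfectly match $A\setminus\{\overline{x}\}$ against $B$ minus $k$ vertices, forcing $|A|-1=|B|-k$, i.e.\ $k=1$. Thus Theorem \ref{thm:removable doubleton} (which requires a separating \emph{non-tight} cut) is inapplicable; worse, the conclusion you want from it is actually false in this configuration. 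Writing $\overline{X}=\{w,w',w''\}$ with cut edges $wb,\,w'b',\,w''b''$ and taking the doubleton $\{xw,\,w'w''\}$ of $K_4$, in $(G/\overline{Y})-w'w''$ any perfect matching containing $wb$ must also contain $w'b'$ and $w''b''$ (after deleting $w'w''$, each of $w',w''$ has only $w$ and its own cut edge left), i.e.\ three cut edges, which the count above forbids; so $wb$ is forbidden in $(G/\overline{Y})-w'w''$, and $w'w''$ is \emph{not} removable in $G/\overline{Y}$. You should be aware that this is equally a flaw in the paper's own proof, which invokes Theorem \ref{thm:removable doubleton} at the same spot without verifying non-tightness; your instinct to flag the hypothesis was exactly right. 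Any correct treatment of the $K_4$ case has to produce the removable edge by working directly in $G$, where $\partial(\overline{X})$ is a nontrivial cut of a brick and hence genuinely not tight --- so $G$ does have perfect matchings using all three cut edges, and the forcing that kills the argument in $G/\overline{Y}$ is no longer contradictory there.
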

\begin{proof}
      Let $G'=G/(X\rightarrow x)$.
    As $x$ is the hub of $G'$, every edge of $\partial(x)$ lies in some removable class.
    Note that $|N_H(\overline{x})|\ge 2$.
    Assume that $\overline{x}b$ is an edge of $\partial(\overline{x})$ in $H$ where $b\neq \overline{y}$, and $\theta(\overline{x}b)$ is the edge in $G'$ corresponding to $\overline{x}b$.
    Then $\overline{x}b$ is removable in $H$ by Lemma \ref{bipar-H-nonre}.
    If $\theta(\overline{x}b)$ is removable in $G'$, then $\overline{x}b$ is also removable in $G/\overline{Y}$ by Lemma \ref{thm:re_also_re} as $G/\overline{Y}=G'(x)\odot H(\overline{x})$.
    Since $\overline{x}b\notin \partial(\overline{y})$,
    $\overline{x}b$ is removable in $G$ by Lemma \ref{thm:re_also_re} again.
    Moreover, the ends of $\overline{x}b$ belong to $\overline{X}\cup N(\overline{X})\setminus\overline{Y}$. The result follows by setting $e=\overline{x}b$ in this case.
    If $\theta(\overline{x}b)$ is an edge of a removable doubleton in $G'$,  assume that $\{e',\theta(\overline{x}b)\}$  is  a removable doubleton  in $G'$. Then $e'$ is a removable edge in $G/\overline{Y}$ by Theorem \ref{thm:removable doubleton}.
    So $e'$ is removable in $G$ by Lemma \ref{thm:re_also_re} once more. As the ends of $e'$ belong to $\overline{X}\cup N(\overline{X})\setminus\overline{Y}$, the result follows by setting $e=e'$.
    \end{proof}

\section{Proof of  Theorem \ref{main}}

Let $G$ be a  planar wheel-like brick.
By Lemma \ref{pro:mutiedges-add}, we only need to consider the case when $G$ contains no multiple edges.
If  $G$ is a  near-bipartite brick, then  $G$ is isomorphic to $K_4$ by Lemma \ref{lem:ne-bi_k4}.
Now we consider that   $G$ is not a near-bipartite brick.
We will prove this by induction on $|V(G)|$.
If $|V(G)|=6$, then  $G$ is $W_5$ by Proposition \ref{lem:six_vertices_W5}. The result follows.
So we assume that the result holds for all the bricks with less than $n(>6)$ vertices, where $n$ is even.
We now assume that $|V(G)|=n$.
If $G$ is a solid brick, then $G$ is an odd wheel by Theorem \ref{thm:planar_solid}. The theorem holds.
So we consider the case when $G$ is a nonsolid brick.
Then $G$ has a robust cut  by Theorem \ref{exist-robust}.
Let $\partial(Z)$ be a robust cut of $G$ such that  $G_1=G/(Z \rightarrow z)$ and $G_2=G/(\overline{Z}\rightarrow \overline{z})$ are near-bricks.
Since $G$ is planar, both $G_1$ and $G_2$ are planar by Lemma \ref{lem:planarG/X}.

We assume firstly that both of $G_1$ and $G_2$ are bricks.
Recall that $G$ is wheel-like.
Then at least one of $G_1$ and $G_2$, say $G_1$, is wheel-like and $z$ is its hub by Lemma \ref{lem:at_least_one_wheel-like}.
By the induction hypothesis,  $G_1$  is an odd wheel.
Then every edge of $\partial(z)$ lies in some   removable class of $G_1$.
By Lemma \ref{lem:at_least_one_wheel-like}, $G_2$ is wheel-like.
By the induction hypothesis, $G_2$ is an odd wheel.
By Lemmas \ref{Wi_Wj} and \ref{nonplanar}, the wheel-like brick $G$ is nonplanar, a contradiction.

Now we assume that at least one of $G_1$ and $G_2$ is not a brick. Then there are two  cuts, say $\partial(X)$ and $\partial(Y)$, of   $G$ such that $G/(\overline{X}\rightarrow\overline{x})/(\overline{Y}\rightarrow\overline{y})$ is a bipartite matching covered graph $H$, and $G'=G/(X \rightarrow x)$ and $G''=G/(Y \rightarrow y)$  are bricks
by Theorem \ref{robust_H}.

 \noindent
    \textbf{Claim 1.} $G'$ and $G''$ are wheel-like.
\begin{proof} Similar to the proof of 1) of Lemma \ref{lem:at_least_one_wheel-like} (with $G'$ and $G''$ in place of $G_1$ and $G_2$, respectively), we have at least one of $G'$ and $G''$ is wheel-like, such that $x$ or $y$ is its hub.

    Without loss of generality, we assume that $G'$ is wheel-like and $x$ is its hub.
    By Lemma \ref{lem:planarG/X}, $G'$ is planar.
    So $G'$ is an odd wheel by the induction hypothesis.
    Suppose that $G''$ is not wheel-like.
    Then there is a removable class $R_3$ in $G''$ such that $R_3\cap \partial(y)=\emptyset$.  
    By Lemma \ref{thm:re_also_re} and Theorem \ref{thm:removable doubleton}, $R_3$ contains a removable edge $e_0$ in $G$.
    Suppose that $|N_H(\overline{x})|\ge2$.
    By Lemma \ref{lem:V(H)_re_edge}, there exists a  removable  edge $e$ in $G$ such that both  ends of $e$ belong to $\overline{X}\cup N(\overline{X})\setminus\overline{Y}$.
    So $e$ and $e_0$ are two nonadjacent removable edges in $G$, contradicting the assumption that $G$ is wheel-like.
    Therefore, we have $|N_H(\overline{x})|=1$.
    As $H$ is matching covered, $|V(H)|=2$ and $V(H)=\{\overline{x},\overline{y}\}$.
    So $G$ is isomorphic to $G'(x)\odot G''(y)$.
    Recalling that $G'$ is an odd wheel, every edge of $\partial(x)$ lies in some removable class of $G'$.
    By Lemma \ref{lem:at_least_one_wheel-like}, $G''$ is wheel-like since $G$ is wheel-like.
\end{proof}

 By Lemma \ref{lem:planarG/X}, $G'$ and $G''$ are planar. So
  $G'$ and $G''$ are odd wheels by the induction hypothesis and  Claim 1. Assume that $u_h$ and $v_h$ are the hubs of $G'$ and $G''$, respectively.
    Let  $N_{G''}(v_h)=\{v_1,\ldots, v_{k}\}$.
    By the proof of Claim 1,
    without loss of generality,
    assume that $x=u_h$. Moreover, we have the following claim.

\noindent
    \textbf{Claim 2.} $V(H)=\{\overline{x},\overline{y}\}$.
\begin{proof}
    Suppose, to the contrary, that $|N_H(\overline{x})|\ge2$.
    By Lemma \ref{lem:V(H)_re_edge}, there exists a  removable edge $e_1$ in $G$  such that both  ends of $e_1$ belong to $\overline{X}\cup N(\overline{X})\setminus\overline{Y}$.
    If $y=v_i$ $(i\in\{1,\ldots,k\})$, then every edge of $\partial(v_h)\backslash E[v_i,v_h]$ is a removable class $R$ in $G''$.
    Note that $R\cap \partial(y)=\emptyset$.
    If $R$ is a removable edge, then $R$ is also a removable edge in $G$ by Lemma \ref{thm:re_also_re}, and if $R$ is a removable doubleton, then one edge of $R$ is also removable in $G$ by Theorem \ref{thm:removable doubleton}.
    So there exists an edge $e_2$ that is removable in $G$ and both  ends of $e_2$ belong to $\overline{Y}$.
    Therefore, $e_1$ and $e_2$ are two nonadjacent removable edges in $G$, contradicting the assumption that $G$ is wheel-like.
    So $y=v_h$.
    Suppose that $|N_H(\overline{y})|\ge2$. Then there exists an edge $e_3$ that is removable in $G$ and both  ends of $e_3$ belong to $\overline{Y}\cup N(\overline{Y})\setminus\overline{X}$ by Lemma \ref{lem:V(H)_re_edge}.
    So $e_1$ and $e_3$ are two nonadjacent removable edges in $G$, a contradiction.
    Therefore, we have $|N_H(\overline{y})|=1$.
    Since $H$ is matching covered, it is connected. Then $N_H(\overline{y})=\{\overline{x}\}$, that is $V(H)=\{\overline{x},\overline{y}\}$.\end{proof}

   By Claim 2,  $G'(x)\odot G''(y)$ is isomorphic to $G$.   Recall that $G'$ and $G''$ are odd wheels.
   By Lemma \ref{nonplanar},
   the graph $G$ is nonplanar, a contradiction. Therefore, the result holds.              





\end{document}